\newcommand{\bC}{\mathbb C}
\newcommand{\bF}{\mathbb F}
\newcommand{\bZ}{\mathbb Z}
\newcommand{\bQ}{\mathbb Q}
\newcommand{\frakM}{\mathfrak M}
\newcommand{\frakg}{\mathfrak g}
\newcommand{\phibar}{\overline{\varphi}}
\newcommand{\rhobar}{\bar{\rho}}
\newcommand{\al}{\alpha}
\newcommand{\lam}{\lambda}
\newcommand{\varep}{\varepsilon}
\DeclareMathOperator{\Hom}{Hom}
\DeclareMathOperator{\GL}{GL}
\DeclareMathOperator{\PGL}{PGL}
\DeclareMathOperator{\SL}{SL}
\DeclareMathOperator{\SU}{SU}
\DeclareMathOperator{\GU}{GU}
\DeclareMathOperator{\U}{U}
\DeclareMathOperator{\id}{id}
\DeclareMathOperator{\Gal}{Gal}
\DeclareMathOperator{\Ind}{Ind}
\theoremstyle{plain}
\newtheorem{Thm}{Theorem}[section]
\newtheorem{Lem}[Thm]{Lemma}
\newtheorem{Prop}[Thm]{Proposition}
\newtheorem{Cor}[Thm]{Corollary}
\newtheorem{Def}[Thm]{Definition}
\theoremstyle{Rem}
\newtheorem{Rem}[Thm]{Remark}
\numberwithin{equation}{section}
\numberwithin{paragraph}{section}
\newtheorem{what}{Question}
\newenvironment{rem}{\begin{Rem} \rm}{\end{Rem}}
\newenvironment{Deff}{\begin{Def} \rm}{\end{Def}}
\newenvironment{What}{\begin{what} \rm}{\end{what}}
\def\1{\mathbf{1}}
\def\Hom{\mathrm{Hom}}
\def\Gal{\mathrm{Gal}}
\def\CC{\mathbb{C}}
\def\FF{\mathbb{F}}
\def\FFF{\overline{\mathbb{F}}}
\def\GG{\mathbb{G}}
\def\MM{\mathfrak{M}}
\def\OO{\mathcal{O}}
\def\QQ{\mathbb{Q}}
\def\QQQ{\overline{\mathbb{Q}}}
\def\SSSS{\mathfrak{S}}
\def\ZZ{\mathbb{Z}}
\def\Q{{\mathbb Q}}
\def\F{{\mathbb F}}
\def\Qp{{{\mathbb Q}_p}}
\def\Z{{\mathbb Z}}
\def\C{{\mathbb C}}
\DeclareMathOperator{\BC}{BC}
\DeclareMathOperator{\WD}{WD}
\DeclareMathOperator{\St}{St} 
\DeclareMathOperator{\Art}{Art} 
\DeclareMathOperator{\Deform}{Def} 
\DeclareMathOperator{\Frob}{\varphi} 
\newcommand{\der}{G^{\text{der}}} 
\newcommand{\conj}{c} 
\newcommand{\Oe}{\mathcal{O}}  
\newcommand{\rese}{k} 
\newcommand{\Profin}{\Gamma}
\newcommand{\Inertia}{\mathcal{I}}
\newcommand{\AbGal} {\Gamma}
\newcommand{\dualC}{^C\hat{G}}
\newcommand{\Lpacket}{\Pi} 
\newcommand{\Functor}{T_{\mathrm{dd}}^{\ast}}
\newcommand{\weyl}{s_0} 
\begin{document}  

\title{From $p$-modular to $p$-adic Langlands correspondences for $\U(1,1)({\QQ_{{p}^{2}}}/\QQ_{p})$: deformations in the non-supercuspidal case}
\date{}
\author{R. Abdellatif, A. David, B. Romano, H. Wiersema}
\maketitle
\thispagestyle{empty}
\begin{abstract}
This paper surveys what is known about (conjectural) $p$-adic and $p$-modular semisimple Langlands correspondences in the non-supercuspidal setting for the unramified quasi-split unitary group $\U(1,1)(\QQ_{p^{2}}/\QQ_{p})$. It focuses in particular on the potential of deformation theory to relate these correspondences.
\end{abstract}

\tableofcontents

\section{Introduction}\label{section-intro}
Langlands correspondences are non-abelian, and {mostly still} conjectural, generalisations of class field theory. Recall that, given a non-Archimedean local field $F$ with positive residual characteristic $p$ and separable closure $\overline F$, the local version of class field theory provides a natural identification of continuous $\C$-valued characters of $\Gamma_{F} := \Gal(\overline{F}/F)$ with smooth complex characters (i.e. irreducible smooth representations) of $F^{\times} = \GL_{1}(F)$. In 1967, Langlands conjectured that {{some}} analogous correspondence should exist for higher-dimensional representations (characters being one-dimensional representations), and that $n$-dimensional continuous representations of $\Gamma_{F}$ should naturally correspond to some admissible smooth representations of $\GL_{n}(F)$. He even went further, as he conjectured later that such a statement should hold for reductive groups $\GG$ other than $\GL_{n}$ if one prescribes certain conditions on the image of the Galois representations involved in such correspondences, and if one allows correspondences with finite fibres that are not necessarily one-to-one. More specifically, we should be able to gather the relevant representations of $G = \GG(F)$ into disjoint sets called packets, each of which should correspond to a single Galois representation. For more details and explanations of this so-called classical setting, in which the representations of both $G$ and $\AbGal_F$ are defined over $\bC$, the reader should for instance refer to \cite{Borel76}. 

In the last decades, congruences between modular forms as well as deformations of $p$-modular Galois representations have played important roles in the proofs of some major arithmetical results, such as Wiles' proof of Shimura--Taniyama--Weil conjecture \cite{Wiles} and Kisin's work on the Fontaine--Mazur conjectures \cite{KisinFMC}. These advances motivate in turn the search for analogues of Langlands correspondences that classify representations with coefficients in rings other than $\C$. The latest examples are given by $p$-modular Langlands correspondences, which take coefficients in an algebraically closed field of characteristic $p$, and $p$-adic Langlands correspondences, which take coefficients in a (large enough) finite extension of $\bQ_p$. In both of these settings, while we can often understand the Galois side of the correspondence, the so-called automorphic counterpart is still very mysterious \cite{AHHV}, even when studying representations of $\GL_{2}(F)$ \cite{BP12, Le, Wu}.

A natural question in this setting is how compatible the $p$-adic and $p$-modular statements are. More precisely, if $E$ is a finite extension of $\bQ_p$ with ring of integers $\Oe$, {maximal ideal $\mathfrak{p}$}, and residue field $k = \Oe/\mathfrak{p}$, then any representation $\pi$ {defined over $\Oe$} naturally gives {(via reduction modulo $\mathfrak{p}$)} a representation $\overline\pi$ over $k$. On the other hand, given a representation over $k$, deformation theory allows us to study representations over $\Oe$ whose reduction modulo $\mathfrak{p}$ is isomorphic to $\overline\pi$. 
The ability to move between characteristic $p$ and characteristic zero naturally leads to the following kind of functoriality problem: let $\overline{k}$  be an algebraically closed field of characteristic $p$ and let $\overline{\pi}$ be an irreducible smooth $\overline{k}$-representation of a $p$-adic group $G$ that corresponds (under an appropriate $p$-modular Langlands correspondence) to a continuous representation $\overline\sigma$ of $\Gamma_{F}$. Is it possible to relate deformation theories for $\overline\sigma$ and $\overline\pi$ under an appropriate $p$-adic Langlands correspondence? Equivalently, how do reduction modulo $p$ and deformation theory help to connect the aforementioned $p$-modular and $p$-adic Langlands correspondences? Note that the ability to answer these questions through deformation theory is a keystone in Colmez's proof of $p$-adic Langlands correspondences for $\GL_{2}(\QQ_{p})$ for $p \geq 5$ \cite{Col10, Kis10}, so it is natural to consider them when interested in $p$-adic Langlands correspondences for other $p$-adic groups.

To our knowledge, this has not been studied much besides the $\GL_{n}(F)$ case, even in the $\ell$-adic case (i.e. when $\overline{k}$ is an algebraically closed field of positive characteristic $\ell \not= p$, see \cite{VignerasLivre}), though the relevant deformations in this setting are quite well understood on the Galois side \cite{BellovinGee, BoPat20}. In a current work in progress, the two first authors study the case of special linear group $\SL_{2}(\Q_{p})$, which is the first group for which a semisimple $p$-modular Langlands correspondence involving actual packets has been proved \cite{Abdellatif12}. The present paper focuses on the case of the quasi-split unramified unitary group $G = \U(1,1)(\QQ_{p^{2}}/\QQ_{p})$, which is the first non-split group for which a semisimple $p$-modular Langlands correspondence has been settled \cite{Koziol}. Note that this correspondence also involves actual packets: it is not a one-to-one, but a finite-to-one, correspondence. Our question is the following: how does this semisimple correpondence behave under deformations, i.e. when the objects it involves are lifted to $p$-adic representations/parameters?

This paper explains what is known so far in this direction for non-supercuspidal objects. As above, let $G = \U(1,1)(\QQ_{p^{2}}/\QQ_{p})$. First, in Section \ref{section-repsofG}, we review basic definitions about representations of $p$-adic groups in the context of the group $G$, including the definition of a non-supercuspidal representation of $G$.
We finish the section with the classification of irreducible smooth non-supercuspidal representations of $G$ over $\FFF_{p}$, where $\FFF_{p}$ denotes an algebraic closure of the residue field of $\Q_{p}$. In Section \ref{section-langlands} we introduce Langlands parameters and describe where non-supercuspidal representations fit into the semisimple Langlands correspondence Kozio\l \ attached to representations of $G$ \cite{Koziol}. In Section \ref{section-nonsc-deformations} we begin to explore how the semisimple Langlands correspondence behaves when lifted to characteristic $0$. To do so, we describe recent results of Hauseux--Sorensen--Schmidt \cite{HSS1, HSS2} about the behaviour of parabolic induction under deformation, and we explicitly determine which representations of $G$ their results apply to on the automorphic side. 

Finally, in Section \ref{SectionDeformationParameters} we use recent work of Kozio\l--Morra to study how the Galois side of the correspondence behaves under deformation \cite{KM}. Our method here is to use the theory of Kisin modules to better understand the deformations of interest. 
The distinction between supercuspidal and non-supercuspidal representations has a counterpart on the Galois side of the correspondence, but we do not need to make this distinction for the main results of Section~\ref{SectionDeformationParameters}.
We can afford to be more general in this section because the results on the Galois side are uniform. (In contrast, the construction and deformations of supercuspidal representations on the automorphic side involves different techniques than for their non-supercuspidal counterparts.)
We point out open problems related to deformations on both sides of the Langlands correspondence that are the subject of work in progress.

\subsection*{General notation}
\label{section-notation}
We fix a prime integer $p$. We let $\QQ_{p}$ denote the field of $p$-adic numbers and we fix a separable closure $\overline\bQ_p$ of $\bQ_p$, as well as an algebraic closure $\FFF_{p}$ of the residue field $\FF_{p}$ of $\QQ_{p}$. {We let $\overline{\ZZ}_{p}$ denote the ring of integers of $\overline{\QQ}_{p}$.} Given any positive integer $n$, we let $\bQ_{p^n}$ be the unique unramified extension of degree $n$ of $\bQ_p$ in $\overline\bQ_p$ and we denote by $\Z_{p^{n}}$ its ring of integers. We fix an isomorphism $\ZZ_{p^{n}}/p\ZZ_{p^{n}} \to \bF_{p^n} \subset \FFF_{p}$ {identifying the residue field of $\bQ_{p^n}$ with $\bF_{p^n}$,} and we write $\overline x \in \FF_{p^{n}}$ for the image of $x \in \ZZ_{p^{n}}$ under the composite map $\ZZ_{p^{n}} \twoheadrightarrow \ZZ_{p^{n}}/p\ZZ_{p^{n}} \to \bF_{p^{n}}$. {These maps extend to a} reduction map $\overline{\ZZ}_{p} \to \FFF_{p}$, which we also denote $x \mapsto \overline x$, {and which allows us to identify the residue field of $\overline{\bQ}_p$ with $\overline{\bF}_p$.}

Let $\conj$ denote the nontrivial element of $\Gal(\bQ_{p^2}/\bQ_p)$ and, for $g = \begin{psmallmatrix} x & y\\z & w \end{psmallmatrix} \in \GL_2(\bQ_{p^2})$, write $\conj(g)$ for $\begin{psmallmatrix} \conj(x) & \conj(y)\\  \conj(z) &  \conj(w) \end{psmallmatrix}$. We write $\GG$ for the unramified quasi-split unitary group in two variables defined over $\Q_{p}$ and set $G = \GG(\bQ_p)$. As usual, and more concretely, we identify $G = \U(1,1)(\Q_{p^{2}}/\Q_{p})$ with the following subgroup of $\GL_2(\bQ_{p^2})$, where $g^* = \conj(g)^t$ denotes the conjugate transpose of $g \in \GL_{2}(\QQ_{p^{2}})$:
\begin{equation*}
G = \left\{ g \in \GL_2(\bQ_{p^2}) \mid g^*\begin{pmatrix}0 & 1\\1 & 0\end{pmatrix}g = \begin{pmatrix}0 & 1\\1 & 0\end{pmatrix}  \right\} \ .
\end{equation*} 
We write $I_{2} = \begin{psmallmatrix} 1 & 0 \\ 0 & 1 \end{psmallmatrix}$ for the $2 \times 2$ identity matrix.
We let $B$ be the subgroup of upper-triangular matrices in $G$. It is a Borel subgroup of $G$ with Levi decomposition $B = TU$, where $U$ denotes the unipotent radical of $B$ and $T$ the maximal torus of $G$ made of all diagonal matrices in $G$:
\begin{equation*}
T = \left\{ t(x) := \begin{pmatrix} x & 0\\0 & \conj(x)^{-1} \end{pmatrix}, \  x \in \bQ_{p^2}^\times \right\}.
\end{equation*}
Note that $U$ is an abelian group, isomorphic to the additive group $(\QQ_{p}, +)$. We let $B^{-}$ be the opposite Borel to $B$ with respect to $T$.
Note that $B^{-}$ is nothing but the subgroup of lower-triangular matrices in $G$.

We let $W$ be the Weyl group of $G$. Recall that we have $W = N_{G}(T)/T$, where we write $N_{G}(T)$ for the normaliser of $T$ in $G$. The group $W$ has two elements, and we let ${\weyl}$ denote the non-trivial one. A representative of ${\weyl}$ in $N_{G}(T)$ is given by the matrix $s = \begin{psmallmatrix} 0 & 1 \\ 1 & 0 \end{psmallmatrix}$. 

We let $\der = \SU(1, 1)(\bQ_{p^2}/\bQ_p)$ denote the derived group of $G$: it consists of all matrices in $G$ with determinant $1$. Recall that, once we fix an element $\varepsilon \in \Q_{p^{2}}$ such that $\Q_{p^{2}} = \Q_{p}(\varepsilon)$ and $c(\varepsilon) = -\varepsilon$, conjugation by the element $\begin{psmallmatrix} \varepsilon & 0\\0 & 1 \end{psmallmatrix}$ defines a isomorphism from $\der$ to $\SL_{2}(\Q_{p})$. 

Given any ring $R$, any character $\chi: H \to R^{\times}$ of a group $H$, and any positive integer $n$, we write $\chi^{n}: H \to R^{\times}$ for the $R$-valued character of $H$ given by {{$\chi^{n}(h) := \chi(h)^{n}$}}. For any group $H$, we write $\1_H$ for the trivial representation of $H$ over $\overline\bF_p$.

Given any finite extension $F/\bQ_p$, we write $\AbGal_F := \Gal(\overline\bQ_p/F)$ for its absolute Galois group and $\Inertia_F \subset \AbGal_F$ for the inertia subgroup of $\AbGal_F$. We fix an element $\Frob \in \Gamma_{\bQ_p}$ such that the image of $\Frob$ in the abelianisation $\AbGal_{\bQ_p}^{\text{ab}}$ is equal to the image of $p^{-1}$ under the reciprocity map $\bQ_p^\times \to \AbGal_{\bQ_p}^{\text{ab}}$ of local class field theory. 
It is useful to recall that we have the following short exact sequence, where $W_{\bQ_p} \subset \Gamma_{\bQ_p}$ denotes the Weil group of $\bQ_p$ and $\langle \Frob \rangle \simeq \Z$ denotes the subgroup of $\Gamma_{\bQ_p}$ generated by $\Frob$:
\begin{equation*}
1 \longrightarrow {\Inertia}_{\bQ_p} \longrightarrow W_{\bQ_p} \longrightarrow \langle \Frob \rangle \longrightarrow 1 \ .
\end{equation*}

\section{Non-supercuspidal representations of $G$ over $\overline{\bF}_p$}
\label{section-repsofG}
In this section, we recall the classification of irreducible smooth non-supercuspidal representations of $G = \U(1, 1)(\bQ_{p^2}/\bQ_p)$ over $\FFF_{p}$, as first established in \cite[Chapitre 5]{Abdellatif}. Given a commutative ring $A$, recall that a representation $(\pi, V)$ of $G$ on an $A$-module $V$ is called {\it smooth} if every element of $V$ has open stabiliser in $G$. Equivalently, this means that $V$ can be written as $\displaystyle \bigcup_{K} V^{K}$, where the union is taken over the open subgroups $K$ of $G$, and where $V^{K} := \{ v \in V \mid \pi(k)v = v \text{ for all } k \in K\}$ denotes the subspace of $K$-fixed vectors in $V$. We write $\mathrm{Mod}_{G}^{\infty}(A)$ for the category of smooth representations of $G$ over $A$, and define similarly $\mathrm{Mod}_{\Gamma}^{\infty}(A)$ for any topological group $\Gamma$.

Given any closed subgroup $H$ of $G$ and any smooth $A[H]$-module $V$, we set 
\begin{eqnarray*}
\nonumber \Ind_{H}^{G}(V) &=& \{f : G \to V \mid \text{ there exists a compact open subgroup } K_{f} \subset G \text{ such that }\\ \nonumber & & f(hgk) = h \cdot f(g) \text{ for all } h \in H, g \in G, k \in K_{f}\}.
\end{eqnarray*}
The $A$-module $\Ind_{H}^{G}(V)$ is naturally endowed with a smooth action of $G$ by right translations, and the corresponding functor $\Ind_{H}^{G} : \mathrm{Mod}_{H}^{\infty} \to \mathrm{Mod}_{G}^{\infty}$ is called {\it smooth induction from $H$ to $G$}. A particular case of smooth induction is given by {\it parabolic induction}, when $H$ is a (proper) parabolic subgroup of $G$. In our setting, $G$ is of relative rank $1$ over $F$, hence if $H$ is a proper parabolic subgroup, we can assume without loss of generality that $H = B$ is the Borel subgroup defined above. Parabolic induction is then defined as follows: given a smooth character $\chi : T \to A^{\times}$, we can inflate $\chi$ to a character of $B$ trivial on $U$ (still denoted by $\chi$) and consider $\Ind_{B}^{G}(\chi)$.
\begin{rem} 
Note that $\Ind_{B}^{G}$ actually defines a functor $\mathrm{Mod}_{T}^{\infty, adm}(A) \to \mathrm{Mod}_{G}^{\infty, adm}(A)$, where we write $\mathrm{Mod}^{\infty, adm}_{X}(A)$ for the full subcategory of $\mathrm{Mod}^{\infty}_{X}(A)$ whose objects are {\it admissible} representations, i.e. such that $V^{K}$ is an $A$-module of finite type for any compact open subgroup $K$ of $G$.
\end{rem}

When classifying irreducible smooth representations of $G$ over $A = \FFF_{p}$, a natural first step is to study parabolically induced representations, not only because of their uniform and easy construction, but also because they reflect representations of $G$ that come from (proper) reductive subgroups of $\GG$. This leads to the following definition.  
\begin{Deff}
\label{DefnonspcuspG}
Let $\pi$ be an irreducible smooth representation of $G$ over $A$.
\begin{itemize}
\item We say that $\pi$ is {\it non-supercuspidal} if $\pi$ is isomorphic to a subquotient of $\mathrm{Ind}_{B}^{G}(\chi)$ for some smooth character $\chi : T \to A^{\times}$.
\item We say that $\pi$ {\it a principal series representation} if $\pi$ is isomorphic to a parabolically induced representation $\mathrm{Ind}_{B}^{G}(\chi)$ for some smooth character $\chi : T \to A^{\times}$. 
\end{itemize}
\end{Deff}
Following \cite[Th\'eor\`eme 5.1.2]{Abdellatif}, we know that isomorphism classes of irreducible smooth non-supercuspidal representations of $G$ over $\FFF_{p}$ split into three disjoint families: characters, principal series representations, and special series representations. We now recall the explicit description of these objects, as stated in \cite[Theorem 4.3]{Koziol}. This requires us to introduce the following notation.
For any $\lambda \in \FFF_{p}^{\times}$, we denote by $\mu_{\lambda} : \Q_{p^{2}} \to \FFF_{p}^{\times}$ the smooth character trivial on $\ZZ_{p^{2}}^{\times}$ that maps $p$ to $\lambda$. Also, we let $\omega : \Q^{\times}_{p^{2}} \to \FFF_{p}^{\times}$ be the smooth character such that $\omega(p) = 1$ and $\omega(u)=\overline{u} \in \FF_{p^{2}}^{\times}$  is the reduction
modulo $p$ of $u \in \Z_{p^{2}}^{\times}$. 

\subsection{Principal series representations and characters of $G$}
As $T$ is naturally isomorphic to $\Q_{p^{2}}^{\times}$ via the group homomorphism $\Q_{p^{2}}^{\times} \to T$ that maps $x$ to $t(x)$, all characters $\mu_{\lambda}$, for $\lambda \in \FFF_{p}^{\times}$, and $\omega$ can (and will) also be seen as smooth characters of $T$. It is then straightforward to check that any smooth character of $T$ is of the form $\mu_{\lambda}\omega^{r}$ for a unique pair $(r, \lambda) \in \bZ \times \FFF_{p}^{\times}$ with $0 \leq r \leq p^{2} - 2$. By \cite[Th\'eor\`eme 5.3.1, (2)]{Abdellatif}, a parabolically induced representation $\Ind_{B}^{G}(\chi)$ is irreducible if and only if $\chi$ does not extend to a smooth character of $G$, so to describe the principal series representations of $G$ we must identify which of these characters can be extended to smooth characters of $G$.
\begin{Lem}
\label{lemCharG}
Let $(r, \lambda) \in \bZ \times \FFF_{p}^{\times}$ with $0 \leq r \leq p^{2} - 2$. The smooth character $\mu_{\lambda}\omega^{r} : T \to \FFF_{p}^{\times}$ extends to a smooth character $\chi : G \to \FFF_{p}^{\times}$ if, and only if, $\lambda = 1$ and $r = m(p-1)$ with $0 \leq m \leq p$. In this case, we have $\chi = \omega^{- m} \circ \det$. 
\end{Lem}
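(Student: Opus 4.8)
The plan is to reduce the statement to a computation on the torus, exploiting that $G = T\cdot\der$. Two structural inputs make this work. First, $\der \cong \SL_2(\QQ_p)$ is a perfect group, so any smooth character of $G$ is automatically trivial on $\der$; in particular such a character $\chi$ is determined by its restriction $\chi|_T$, and if $\mu_\lambda\omega^r$ is to extend to $G$ then it must be trivial on $T\cap\der$. Second, $\det\colon G\to\QQ_{p^2}^\times$ has image contained in the norm-one subgroup $\ker(N_{\QQ_{p^2}/\QQ_p})$ and has kernel $\der$, while $\det(t(x)) = x\,\conj(x)^{-1}$; by Hilbert's Theorem 90 the map $x\mapsto x\,\conj(x)^{-1}$ already surjects onto $\ker(N_{\QQ_{p^2}/\QQ_p})$, so $\det(T) = \det(G)$ and hence $G = T\cdot\der$. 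Together these facts show that a smooth character of $T$ extends to $G$ precisely when it is trivial on $T\cap\der$, and that such an extension is then unique.

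Next I would make the condition explicit. One has $T\cap\der = \{t(x)\mid \conj(x)=x\} = \{t(x)\mid x\in\QQ_p^\times\}$, which we identify with $\QQ_p^\times$. Restricting $\mu_\lambda\omega^r$ to $\QQ_p^\times = p^{\ZZ}\times\ZZ_p^\times$ and unwinding the definitions of $\mu_\lambda$ and $\omega$: the value at $p$ is $\lambda$, and the restriction to $\ZZ_p^\times$ is $u\mapsto\overline u^{\,r}$ with values in $\FF_p^\times$, a cyclic group of order $p-1$. Hence $\mu_\lambda\omega^r$ is trivial on $T\cap\der$ if and only if $\lambda=1$ and $(p-1)\mid r$; combined with the normalisation $0\le r\le p^2-2$, and since $(p^2-2)/(p-1) < p+1$, this is equivalent to $r = m(p-1)$ with $0\le m\le p$.

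It then remains to identify $\chi$ in the extending case. Here I would simply verify that $\omega^{-m}\circ\det$ — which is manifestly a smooth character of $G$, being the composite of the morphism $\det$ with the smooth character $\omega^{-m}$ of $\QQ_{p^2}^\times$ — restricts on $T$ to $\mu_1\omega^{m(p-1)}$. The one computational ingredient is that $\conj$ induces the Frobenius $x\mapsto x^p$ on the residue field $\FF_{p^2}$ and fixes $p$, so that $\omega\circ\conj = \omega^p$ on $\QQ_{p^2}^\times$; then, writing $\omega$ also for the character $t(x)\mapsto\omega(x)$ of $T$,
\[
(\omega^{-m}\circ\det)(t(x)) = \omega\big(x\,\conj(x)^{-1}\big)^{-m} = \omega(x)^{-m}\,\omega(\conj(x))^{m} = \omega(x)^{-m}\,\omega(x)^{pm} = \omega(x)^{m(p-1)}.
\]
By the uniqueness of the extension noted above, this forces $\chi = \omega^{-m}\circ\det$. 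The only steps carrying any content are the two structural facts about $\der$ and $\det$ in the first paragraph, namely the perfectness of $\SL_2(\QQ_p)$ and Hilbert 90 for the quadratic extension $\QQ_{p^2}/\QQ_p$; both are standard, so I do not anticipate a genuine obstacle — the remainder is bookkeeping with the characters $\mu_\lambda$ and $\omega$ and with the interval $[0,\,p^2-2]$.
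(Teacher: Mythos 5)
Your proof is correct and follows essentially the same route as the paper: reduce to triviality on $T\cap\der\simeq\QQ_p^\times$ to force $\lambda=1$ and $(p-1)\mid r$, then verify directly that $\omega^{-m}\circ\det$ restricts to $\omega^{m(p-1)}$ on $T$ using $\omega\circ\conj=\omega^p$. The only difference is that you make explicit (via perfectness of $\SL_2(\QQ_p)$ and Hilbert 90 giving $G=T\cdot\der$) the uniqueness of the extension, which the paper's proof leaves implicit when it concludes $\chi=\omega^{-m}\circ\det$.
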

\begin{proof}
Let $\mu_{\lambda}\omega^{r}$ be a smooth character of $T$ that extends to a smooth character $\chi$ of $G$. Then $\chi$ must be trivial on the derived group $\der$ of $G$, hence it is trivial on the diagonal torus of $\der$, which means that we must have $\chi(x) = 1$ whenever $\conj(x) = x$, i.e. whenever $x$ belongs to $\Q_{p}^{\times}$. As $p$ belongs to $\Q_{p}^{\times}$, we must have $\chi(t(p)) = (\mu_{\lambda}\omega^{r})(p) = 1$, i.e. $\lambda = 1$.

Now let $\zeta$ be a root of unity of order $p^{2} - 1$ in $\Q_{p^{2}}^{\times}$: then $\zeta$ lies in $\Z_{p^{2}}^{\times}$ and $\omega(\zeta)$ generates the multiplicative group $\FF_{p^{2}}^{\times}$. Since $\zeta^{p+1}$ belongs to $\Q_{p}^{\times}$, we must have $\chi(t(\zeta^{p+1})) = 1$, i.e. $\zeta^{r(p+1)} = 1$. This implies, as $\zeta$ is of order $p^{2} -1$, that $p-1$ divides $r$, hence $r = m(p-1)$ with $0 \leq m \leq p$. Conversely, if $r$ satisfies such a condition, then $\omega^{r} = (\omega^{p-1})^{m}$ is the restriction of the smooth character $\omega^{-m} \circ \det : G \to \FFF_{p}^{\times}$, since for any $k$ we have $(\omega^{k} \circ \det)(t(\zeta)) = \omega^{k}(\zeta\conj(\zeta)^{-1}) = \omega^{k}(\zeta\zeta^{-p}) = \omega^{k}(\zeta^{1-p}) = \zeta^{(1-p)k}$.
\end{proof}

\begin{Cor}
\label{CorPrincipalSeries}
Let $(r, \lambda) \in \bZ \times \FFF_{p}^{\times}$ with $0 \leq r \leq p^{2} -2$. If $(r, \lam) \neq ( (p - 1)m, 1)$ for all $0 \leq m \leq p$, 
then $\Ind_{B}^{G}(\mu_{\lambda}\omega^{r})$ is an irreducible smooth representation of $G$. 
\end{Cor}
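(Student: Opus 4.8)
The plan is to read this off directly from Lemma~\ref{lemCharG} together with the irreducibility criterion recalled just above it. Recall from \cite[Th\'eor\`eme 5.3.1, (2)]{Abdellatif} that, for a smooth character $\chi$ of $T$, the parabolically induced representation $\Ind_{B}^{G}(\chi)$ is irreducible if and only if $\chi$ does not extend to a smooth character of $G$. So the whole task reduces to translating the hypothesis ``$(r,\lam)\neq((p-1)m,1)$ for all $0\leq m\leq p$'' into the assertion that $\mu_{\lambda}\omega^{r}$ admits no such extension.

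First I would invoke the normalisation recorded just before Lemma~\ref{lemCharG}: every smooth character of $T\cong\Q_{p^{2}}^{\times}$ is of the form $\mu_{\lambda}\omega^{r}$ for a unique pair $(r,\lambda)\in\bZ\times\FFF_{p}^{\times}$ with $0\leq r\leq p^{2}-2$, so there is no loss of generality in writing the inducing character as $\chi=\mu_{\lambda}\omega^{r}$ with $(r,\lambda)$ in that range. Then Lemma~\ref{lemCharG} says precisely that $\mu_{\lambda}\omega^{r}$ extends to a smooth character of $G$ if and only if $\lambda=1$ and $r=m(p-1)$ for some integer $0\leq m\leq p$, i.e. if and only if $(r,\lambda)=((p-1)m,1)$ for some such $m$. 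Taking the contrapositive, the hypothesis of the corollary is exactly the condition that $\mu_{\lambda}\omega^{r}$ does not extend to a character of $G$, and the criterion from \cite{Abdellatif} then yields that $\Ind_{B}^{G}(\mu_{\lambda}\omega^{r})$ is irreducible, as claimed.

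There is essentially no obstacle here: all the content sits in Lemma~\ref{lemCharG} and in the cited structure theorem. The only point deserving a moment's care is the bookkeeping between the range $0\leq r\leq p^{2}-2$ used to enumerate characters of $T$ and the exceptional values $m(p-1)$ with $0\leq m\leq p$; one should note that these values, namely $0,\,p-1,\,2(p-1),\dots,\,p(p-1)=p^{2}-p$, all lie in the allowed range (since $p^{2}-p\leq p^{2}-2$ for $p\geq 2$), so that the excluded pairs genuinely occur among the parameters classifying smooth characters of $T$, with $m=0$ and $m=p$ producing the two ``boundary'' cases. Once this is observed, the corollary is immediate, and I would present it in a single short paragraph rather than as a displayed computation.
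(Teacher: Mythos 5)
Your proposal is correct and follows exactly the paper's own argument: the paper's proof is the one-line observation that the corollary is "a direct application of \cite[Th\'eor\`eme 5.3.1, (2)]{Abdellatif} using Lemma~\ref{lemCharG}," which is precisely the contrapositive reading you carry out. The extra bookkeeping you include about the exceptional values $m(p-1)$ lying in the range $0\leq r\leq p^{2}-2$ is a reasonable sanity check but not needed for the implication as stated.
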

\begin{proof}
This is a direct application of \cite[Th\'eor\`eme 5.3.1, (2)]{Abdellatif} using Lemma \ref{lemCharG}.
\end{proof}

\subsection{Special series representations} \label{section-steinberg}
Now assume that $\chi$ is a smooth character of $T$ that extends to a smooth character $\omega^{r} \circ \det$ of $G$. Up to twisting by $\omega^{-r} \circ \det$, we can assume that $\chi$ is trivial and focus on the parabolically induced representation $\Ind_{B}^{G}(\1_B)$. We know from \cite[Th\'eor\`eme 5.3.10, (2)]{Abdellatif} that $\Ind_{B}^{G}(\1_B)$ is a reducible representation of $G$ of length $2$, with the trivial character $\1_G$ as subrepresentation. The irreducible quotient representation
\begin{equation*}
\St_{G} := \Ind_{B}^{G}(\1_{B})/\1_{G}
\end{equation*}
is called the {\it Steinberg representation}, and twists of Steinberg representations by smooth characters of $G$ are called {\it special series representations}. Such representations are exactly those that appear as irreducible quotients of $\Ind_{B}^{G}(\omega^{k} \circ \det)$ for any integer $0 \leq k \leq p$.

\subsection{Classification of non-supercuspidal representations of $G$}
If we combine the results of the previous subsections, we recover the following classification of non-supercuspidal representations of $G$, as given in \cite[Theorem 4.3]{Koziol}. 
\begin{Thm}
\label{ClassificationNonSupercuspidal}
Let $V$ be an irreducible smooth, non-supercuspidal representation of $G$. Then $V$ is isomorphic to one and only one of the following representations:
\begin{itemize}
\item $\omega^{k} \circ \det$, with $0 \leq k \leq p$; 
\item $(\omega^{k} \circ \det) \otimes \St_{G}$, with $0 \leq k \leq p$;
\item $\Ind_{B}^{G}(\mu_\lam\omega^r)$ with $(r, \lam) \in \bZ \times \overline{\bF}_p^\times$ such that $0 \leq r \leq p^{2} - 2$ and $(r, \lam) \neq ((p - 1)m, 1)$ for all $0 \leq m \leq p$.
\end{itemize}
\end{Thm}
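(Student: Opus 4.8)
The plan is to assemble the pieces already in place: the classification of smooth characters of $T$, Lemma~\ref{lemCharG}, Corollary~\ref{CorPrincipalSeries}, the length-two structure of $\Ind_B^G(\1_B)$ recalled in Section~\ref{section-steinberg}, and the ``three disjoint families'' dichotomy of \cite[Th\'eor\`eme 5.1.2]{Abdellatif}. The only thing I would compute afresh is how the parameter $(r,\lambda)$ on the torus side matches the parameter $k$ (or $m$) on the group side; everything substantive is quoted.

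First I would take an irreducible smooth non-supercuspidal $V$. By Definition~\ref{DefnonspcuspG} it is a subquotient of $\Ind_B^G(\chi)$ for some smooth $\chi\colon T\to\FFF_p^\times$, and by the discussion preceding Lemma~\ref{lemCharG} we may write $\chi=\mu_\lambda\omega^r$ for a unique pair $(r,\lambda)$ with $0\le r\le p^2-2$. If $(r,\lambda)\neq((p-1)m,1)$ for all $0\le m\le p$, then Corollary~\ref{CorPrincipalSeries} shows $\Ind_B^G(\mu_\lambda\omega^r)$ is irreducible, so $V\cong\Ind_B^G(\mu_\lambda\omega^r)$ and we land in the third family. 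If instead $(r,\lambda)=((p-1)m,1)$, Lemma~\ref{lemCharG} identifies $\chi$ with the restriction to $T$ of $\eta:=\omega^{-m}\circ\det$; since $\eta$ is trivial on $U$, the tensor identity for induction gives $\Ind_B^G(\chi)\cong\eta\otimes\Ind_B^G(\1_B)$, and since by \cite[Th\'eor\`eme 5.3.10]{Abdellatif} the representation $\Ind_B^G(\1_B)$ has length two with subrepresentation $\1_G$ and quotient $\St_G$, the representation $\Ind_B^G(\chi)$ has exactly the two Jordan--H\"older factors $\eta$ and $\eta\otimes\St_G$; thus $V$ is one of these. To see that, as $m$ ranges over $\{0,\dots,p\}$, the characters $\omega^{-m}\circ\det$ exhaust the $p+1$ characters $\omega^k\circ\det$ with $0\le k\le p$, I would use that $(\omega^k\circ\det)(t(x))=\omega(x)^{-k(p-1)}$: this is trivial for all $x\in\ZZ_{p^2}^\times$ exactly when $(p+1)\mid k$, because $\omega$ has order $p^2-1$ there, so $k\mapsto\omega^k\circ\det$ is injective on $\{0,\dots,p\}$ and $(p+1)$-periodic. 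Conversely, each representation in the list is irreducible, smooth and non-supercuspidal, essentially by construction.

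For the ``one and only one'' part I would then check pairwise disjointness of the three families and injectivity of each parametrisation. One-dimensionality separates the characters from $\St_G$, its twists, and the irreducible principal series. The $\omega^k\circ\det$ with $0\le k\le p$ are pairwise distinct by the periodicity above, and hence so are their twists of $\St_G$, once one knows $\St_G$ admits no nontrivial self-twist by a character of $G$ --- an isomorphism $(\omega^j\circ\det)\otimes\St_G\cong\St_G$ would, on comparing the Jordan--H\"older factors of $\Ind_B^G(\1_B)$ with those of its $\omega^j\circ\det$-twist, force $\omega^j\circ\det=\1_G$, i.e. $(p+1)\mid j$. The remaining statements --- that the irreducible principal series attached to distinct admissible $(r,\lambda)$ are mutually non-isomorphic and are not twists of $\St_G$ --- are contained in \cite[Th\'eor\`eme 5.1.2 and Th\'eor\`eme 5.3.1]{Abdellatif} and \cite[Theorem 4.3]{Koziol}. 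I expect this last point to be the only genuinely non-formal obstacle: unlike in the complex setting there is no Weyl-conjugation isomorphism $\Ind_B^G(\chi)\cong\Ind_B^G(\chi^{\weyl})$ in characteristic $p$, so proving that the parametrisation cannot be coarsened requires an honest analysis of the $B$-module structure (or of the relevant Hecke operators) of $\Ind_B^G(\chi)$ rather than formal bookkeeping --- which is precisely why we take it from \cite{Abdellatif}.
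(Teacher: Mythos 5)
Your overall architecture coincides with the paper's: exhaustion comes from the dichotomy "$\Ind_B^G(\mu_\lambda\omega^r)$ is irreducible unless $\chi$ extends to $G$, in which case it has length two with factors $\eta$ and $\eta\otimes\St_G$", and the substantive non-isomorphism statements are delegated to \cite{Abdellatif} and \cite{Koziol}. Your explicit check that $m\mapsto\omega^{-m}\circ\det$ and $k\mapsto\omega^{k}\circ\det$ parametrise the same $p+1$ characters of $G$ is correct and usefully spelled out. The paper's proof separates principal from special series by comparing dimensions of $I(1)$-invariants (citing \cite[Lemme 5.5.1, Corollaire 5.5.4]{Abdellatif}), which you fold into your citations; that is fine.

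The one place where you replace a citation by your own argument is also the one place with a genuine gap: the claim that an isomorphism $(\omega^{j}\circ\det)\otimes\St_G\cong\St_G$ is excluded ``on comparing the Jordan--H\"older factors of $\Ind_B^G(\1_B)$ with those of its twist''. The twist of $\Ind_B^G(\1_B)$ is $\Ind_B^G(\omega^{j}\circ\det)$, a \emph{different} induced representation; knowing that the two share the factor $\St_G$ does not force their remaining factors to agree, so nothing formal yields $\omega^{j}\circ\det=\1_G$. The central character does not rescue the argument either: central elements of $G$ are the $t(x)$ with $x\conj(x)=1$, on which $\omega^{j}\circ\det$ takes the value $\omega(x)^{2j}$, so triviality of the central character of $(\omega^{j}\circ\det)\otimes\St_G$ only gives $(p+1)\mid 2j$, leaving $j\equiv(p+1)/2$ undisposed of for odd $p$. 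The statement you need --- that the $(\omega^{k}\circ\det)\otimes\St_G$, $0\le k\le p$, are pairwise non-isomorphic --- is precisely \cite[Lemme 5.4.3]{Abdellatif}, which the paper cites at this exact point; its proof goes through the $I(1)$-invariants as a module over the pro-$p$-Iwahori Hecke algebra, i.e.\ the same non-formal mechanism you correctly identify as unavoidable for the principal series. Either quote that lemma here as well, or carry out that Hecke-module computation; the Jordan--H\"older comparison as stated does not close the case.
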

\begin{proof}
We saw above that all these representations are irreducible smooth non-supercuspidal representations of $G$ over $\FFF_{p}$. We know from \cite[Lemme 5.4.1]{Abdellatif} (resp. \cite[Lemme 5.4.3]{Abdellatif}) that principal series (resp. special series) representations coming from distinct characters are non-isomorphic. As principal series representations and special series representations are infinite dimensional, they cannot be isomorphic to smooth characters of $G$. Finally, \cite[Lemme 5.5.1, Corollaire 5.5.4]{Abdellatif} ensures that a principal series representation cannot be isomorphic to a special series representations as their spaces of $I(1)$-invariant vectors have different dimensions (as vectors spaces over $\FFF_{p}$), where $I(1)$ denotes the standard pro-$p$-Iwahori subgroup of $G$.
\end{proof}

\begin{rem}
The classification of supercuspidal representations of $G$ is also known, as it was proven by Kozio\l \ \cite[Theorem 5.7]{Koziol}. Note that it is one of the only three cases where such a classification is explicit (see \cite{Breuil01} and \cite{Abdellatif12} for the two other cases). We choose to not recall this part of the classification in this paper as it would require a lot of extra material that we do not use at all in the sequel. For the record, let us nevertheless mention that one of the next steps of our research project is to understand how supercuspidal representations deform.
\end{rem}

\section{A non-supercuspidal semisimple Langlands correspondence}
\label{section-langlands}
Following Langlands philosophy, (packets of) isomorphism classes of irreducible smooth representations of a reductive $p$-adic group should correspond to equivalence classes of Langlands parameters, which roughly correspond to certain (packets of)  Galois representations. For our group $G = \U(1, 1)(\bQ_{p^2}/\bQ_p)$, this statement has been clarified and proven by Kozio\l \ in \cite[Section 6]{Koziol}. The goal of this section is to explain what happens to packets of non-supercuspidal representations under such a correspondence.

\subsection{Galois representations and dual groups associated with $G$} \label{SectionDefinitionDualGroupsG}
To make precise the conditions put on the Galois representations that should appear in the $p$-modular Langlands correspondence for $G$, we need to define some algebraic groups that are naturally attached to (the algebraic group $\U(1,1)$ that defines) $G$ in this setting. The first one is the $L$-group, or Langlands group, which already appears in the classical (complex) correspondence and is defined as follows. (Recall that $\Frob$ is the Frobenius automorphism fixed in Section \ref{section-intro}.)
\begin{Deff}
\label{DefinitionLgroup}
The {\it $L$-group associated to $G$} is defined as $~^LG := \rm{GL}_{2}(\FFF_{p}) \rtimes \Gamma_{\Q_{p}}$, where the action of $\Gamma_{\Q_{p}}$ on $\GL_{2}(\FFF_{p})$ is given by the following formulae (for $g \in {\rm{GL}}_{2}(\FFF_{p})$ and $h \in \Gamma_{\Q_{p^{2}}}$):
\begin{eqnarray}
\label{ActionGaloisLgroupFrob}
 \Frob g \Frob^{-1} &=& \begin{psmallmatrix}{0} &{1}\\{-1} & {0}\end{psmallmatrix} (g^t)^{-1} \begin{psmallmatrix}{0} &{1}\\{-1} & {0}\end{psmallmatrix}^{-1} = \det(g)^{-1}g \ , \\
\label{ActionGaloisLgroupInertia} \ hg h^{-1}&=& g \ . 
\end{eqnarray}
\end{Deff}
For further use and generalisation, we note here that the group $\GL_{2}(\FFF_{p})$ involved in Definition \ref{DefinitionLgroup} is actually the group of $\FFF_{p}$-points of the usual {\it dual group} of $\U(1,1)$. As the latter splits over the quadratic unramified extension $\Q_{p^{2}}$ of $\Q_{p}$, the $\FFF_{p}$-points of its dual group naturally identify with the $\FFF_{p}$-points of the dual group of the split form, i.e. of ${\rm{GL}}_{2}$. 

The second group we are interested in is the $C$-group, as defined in \cite[Definition 5.3.2]{BuzzardGee} by Buzzard and Gee. As explained in the introduction of \cite{BuzzardGee}, their initial motivation was to define an alternative to the $L$-group that makes it possible to state reasonable generalisations of Clozel's algebraicity conjectures for groups other than $\GL_{n}$. Since this alternative appears in Kozio\l--Morra's work as a convenient tool to deform certain families of Galois parameters \cite{KM}, we need to understand how it relates to the classical $L$-group ${}^LG$ used by Kozio\l \ to state the semisimple correspondence \cite[Definition 6.20]{Koziol}. Note that defining the $C$-group of $G$ requires us to assume $p \neq 2$, though the previously mentioned semisimple correspondence holds for an arbitrary prime $p$.

Thus assume for now that $p$ is odd. The {\it $C$-group associated to $G$} is the $L$-group of some $\GG_{m}$-extension of the algebraic group $\U(1,1)$ that is characterised by \cite[Proposition 5.3.1]{BuzzardGee}. At this point in the paper, we only need  the following concrete description of its $\FFF_{p}$-points. If we let $^CG$ denote this group, then we have (as in \cite[Section 8.3]{BuzzardGee} or \cite[Appendix A]{Koziol}):
\begin{equation}
\label{DefinitionCgroup}
\displaystyle ^CG = \left(\left( \rm{GL}_{2}(\FFF_{p}) \times \FFF^{\times}_{p}\right) / \langle (-I_{2}, -1) \rangle\right) \rtimes \Gamma_{\Q_{p}} \simeq  \left( \rm{GL}_{2}(\FFF_{p}) \times \FFF^{\times}_{p}\right) \rtimes \Gamma_{\Q_{p}} \ , 
\end{equation}
where $\Gamma_{\Q_{p}}$ acts trivially on the $\FFF_{p}^{\times}$-factors and by formulae \eqref{ActionGaloisLgroupFrob} and \eqref{ActionGaloisLgroupInertia}  on the $\GL_{2}(\FFF_{p})$-factors (on both sides of the isomorphism). Note that the latter isomorphism in \eqref{DefinitionCgroup} is given by 
\begin{equation}
\label{eqn-Cgrpiso}
(g, a) \mapsto \left(\begin{pmatrix} a & 0\\0 & a \end{pmatrix}g, a^{2} \right).
\end{equation}

\subsection{From Langlands parameters to $C$-parameters}\label{section-Cparams-def}
We now recall classical definitions related to Langlands parameters involving the $L$-group of $G$. Later, we will adjust our definitions to work with parameters defined using the $C$-group of $G$, to connect more clearly to the setting developed in \cite{KM}. To ease notation, we let $\hat{G} := \rm{GL}_{2}(\FFF_{p})$ denote the left-hand factor in the definition of $~^LG$.
\begin{Deff}
\label{DefLparameter}
A \emph{Langlands parameter} is a group homomorphism $\phi: \AbGal_{\bQ_p} \to ~^LG = \rm{GL}_{2}(\FFF_{p}) \rtimes \Gamma_{\Q_{p}}$  such that the composition of $\phi$ with the canonical projection $~^LG \to \AbGal_{\bQ_p}$ is the identity map.
We say that two Langlands parameters are \emph{equivalent} if they are conjugate by an element of $\hat{G}$.
\end{Deff}   
Let $\phi : \Gamma_{\Q_{p}} \to ~^LG$ be a Langlands parameter. According to \eqref{ActionGaloisLgroupInertia}, $\Gamma_{\Q_{p^{2}}}$ acts trivially on $\hat{G}$, hence the restriction of $\phi$ to $\Gamma_{\QQ_{p^{2}}}$ is of the form $h \mapsto \phi_{0}(h)h$ for some group homomorphism $\phi_{0} : \Gamma_{\Q_{p^{2}}} \to \hat{G}$.
\begin{Deff}
\label{DefinitionRepGalForLparam}
Given a Langlands parameter $\phi$, the group homomorphism $\phi_{0} : \Gamma_{\Q_{p^{2}}} \to \rm{GL}_{2}(\FFF_{p})$ defined above is a two-dimensional $p$-modular representation of $\Gamma_{\QQ_{p^{2}}}$ called the {\it Galois representation associated to $\phi$}.
\end{Deff}
Note that, by construction, a Langlands parameter $\phi$ is completely determined by the image $\phi(\Frob)$ of the Frobenius element and by its Galois representation $\phi_{0}$. But, unlike in the classical complex setting, the Galois representation $\phi_{0}$ is always reducible \cite[Proposition 6.13]{Koziol}. This implies that its image lies in a Borel sugroup of $\rm{GL}_{2}(\FFF_{p})$, which is (up to conjugacy, which does not change the isomorphism class of $\phi_{0}$ as a representation of $\Gamma_{\Q_{p^{2}}}$) the subgroup $P$ of its upper-triangular matrices. Equivalently, this means that $\phi_{0}$ is an extension of the form $1 \rightarrow \chi_{1} \rightarrow \star \rightarrow \chi_{2} \rightarrow 1$, where $\chi_{1}, \chi_{2} : \Gamma_{\QQ_{p^{2}}} \to \FFF_{p}^{\times}$ are continuous characters. Since non-supercuspidal representations come by definition from irreducible smooth representations of $T$ (as subquotients of well-chosen parabolically induced representations, see Definition \ref{DefnonspcuspG}), they should correspond, under the $p$-modular Langlands correspondence for $G$, to Galois representations coming from the corresponding Levi factor of $P$, i.e. to Galois representations that decompose as $\chi_{1} \oplus \chi_{2}$ for some continuous characters $\chi_{1}, \chi_{2} : \Gamma_{\QQ_{p^{2}}} \to \FFF_{p}^{\times}$. Letting $P := ZN$ be the Levi decomposition of $P$, with $N$ being the unipotent radical of $P$, this observation translates on the level of Langlands parameters by requiring that the corresponding Langlands parameter $\phi$ factors through the natural $\Gamma_{\Q_{p}}$-equivariant embedding $~^LZ \hookrightarrow ~^LG$ induced by the canonical embedding $Z \hookrightarrow G$. We will say that such Langlands parameters are {\it non-supercuspidal}.

To give an explicit description of such parameters (as in \cite[Proposition 6.17]{Koziol}), we need to introduce the following notation. For any $\lambda \in \FFF_{p}^{\times}$, we let $\mu_{2, \lam}:\AbGal_{\bQ_{p^2}} \to \overline{\bF}_p^\times$ be the group homomorphism that is trivial on $\Inertia_{\bQ_{p^2}}$ and satisfies $\mu_{2, \lam}(\Frob^2) = \lam$. {Now fix an element $\varpi_2 \in \overline\bQ_p$ such that $\varpi_2^{p^2 - 1} = p$, and let $\varpi_1 = \varpi_2^{p + 1}$. For $n \in \{1, 2\}$, we let $\omega_{n} : \Inertia_{\Q_{p}} \to \FFF_{p}^{\times}$ be the character given by $\omega_n(h) = \overline{(\frac{h\cdot \varpi_n}{\varpi_n})}$}. According to \cite[Lemma 2.5]{Breuil}, $\omega_{n}$ can be extended to a smooth character $\omega_{n} : \Gamma_{\Q_{p^{n}}} \to \FFF_{p}^{\times}$ that maps $\Frob^{n}$ to $1$. It is easy to check \cite[Corollary 6.3]{Koziol} that smooth characters of $\Gamma_{\Q_{p^{2}}} \to \FFF_{p}^{\times}$ are of the form $\mu_{2, \lam}\omega_2^r$ for some pair $(r, \lam) \in \bZ \times \overline{\bF}_p^\times$ with $0 \leq r \leq p^{2} - 2$.

\begin{Prop}
\label{ClassifNonSupercuspLParameters}
A Langlands parameter $\phi :  \AbGal_{\bQ_p} \to ~^LG$ is non-supercuspidal if, and only if, there exists a pair $(r, \lambda) \in \bZ \times \FFF_{p}^{\times}$ with $0 \leq r \leq p^{2} - 2$ such that $\phi$ is equivalent to the Langlands parameter $\psi_{r,\lambda,}$ defined by 
\begin{equation*}
\displaystyle \psi_{r, \lam}(\Frob)= \begin{pmatrix} 1 & 0\\0 & \lam\end{pmatrix} \Frob \text{ and }  \psi_{r, \lam}(h) = \begin{pmatrix} \mu_{2, \lam^{-1}}\omega_2^{r}(h) & 0\\ 0 & \mu_{2, \lam}\omega_2^{-pr}(h)\end{pmatrix} h
\end{equation*}
for any $h \in \AbGal_{\bQ_{p^2}}$. Moreover, if $(r, \lam) \in \bZ \times \overline{\bF}_p^\times$ and $(r', \lambda') \in \bZ \times \FFF_{p}^{\times}$ are such that $0 \leq r, r' \leq p^{2} - 2$, then $\psi_{r,\lambda}$ and $\psi_{r',\lambda'}$ are equivalent Langlands parameters if, and only if, we have $(r', \lambda') = (r, \lambda)$ or $(r', \lambda') = (-pr + m(p^2 - 1), \lam^{-1})$ for some $m \in \bZ$.
\end{Prop}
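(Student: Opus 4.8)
The plan is to split the statement into an existence/surjectivity part and an equivalence (injectivity-up-to-relation) part, both of which are essentially computations using the explicit description of $~^LG$ given in Definition~\ref{DefinitionLgroup}.

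For the first part, I would start from the discussion preceding the proposition: a non-supercuspidal parameter $\phi$ factors (after conjugating by $\hat G$) through $~^LZ \hookrightarrow ~^LG$, so its Galois representation $\phi_0\colon \AbGal_{\bQ_{p^2}} \to \hat G = \GL_2(\FFF_p)$ has diagonal image, say $\phi_0 = \begin{psmallmatrix}\chi_1 & 0\\ 0 & \chi_2\end{psmallmatrix}$ for smooth characters $\chi_1,\chi_2\colon \AbGal_{\bQ_{p^2}} \to \FFF_p^\times$. By the remark recalled from \cite[Corollary 6.3]{Koziol} each $\chi_i$ is of the form $\mu_{2,\lam_i}\omega_2^{r_i}$. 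Next I would write $\phi(\Frob) = A\,\Frob$ for some $A \in \GL_2(\FFF_p)$ normalising the diagonal torus (this normalisation is forced, since $\phi$ must carry $\AbGal_{\bQ_{p^2}}$ into $~^LZ$ and $\Frob$ normalises $\AbGal_{\bQ_{p^2}}$); the relation $\Frob^2 \in \AbGal_{\bQ_{p^2}}$ together with the cocycle condition $\phi(\Frob)\phi(h)\phi(\Frob)^{-1} = \phi(\Frob h \Frob^{-1})$ and the action formula \eqref{ActionGaloisLgroupFrob}, namely $\Frob g \Frob^{-1} = \det(g)^{-1}g$, will pin down $A$ and relate $(r_1,\lam_1)$ to $(r_2,\lam_2)$. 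Concretely, applying the cocycle identity at $h$ and using that conjugation by $\Frob$ on $\GL_2(\FFF_p)$ is $g \mapsto \det(g)^{-1}g$ forces $\chi_2 = \chi_1^{-1}\cdot(\text{the } \Frob\text{-twist of }\chi_1)$; unwinding the $\Frob$-action on $\mu_{2,\lam}\omega_2^r$ (recall $\Frob$-conjugation sends $\omega_2$ to $\omega_2^{p}$ and fixes $\mu_{2,\lam}$-type characters up to the prescribed swap) yields exactly $\chi_1 = \mu_{2,\lam^{-1}}\omega_2^r$, $\chi_2 = \mu_{2,\lam}\omega_2^{-pr}$ after relabelling, and $A$ diagonal equal to $\begin{psmallmatrix}1 & 0\\0 & \lam\end{psmallmatrix}$ up to the $\hat G$-conjugation we are still free to perform. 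This gives that every non-supercuspidal $\phi$ is equivalent to some $\psi_{r,\lam}$, and conversely a direct check that $\psi_{r,\lam}$ as written is a well-defined homomorphism (verify it respects $\Frob^2 \in \AbGal_{\bQ_{p^2}}$, i.e. $\psi_{r,\lam}(\Frob)^2 = \psi_{r,\lam}(\Frob^2)$, which is where the exponents $r$ and $-pr$ and the entries $1,\lam$ must be mutually consistent) and is non-supercuspidal by construction.

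For the equivalence part, I would note that the only freedom left is conjugation by $\hat G = \GL_2(\FFF_p)$. Since both $\psi_{r,\lam}$ and $\psi_{r',\lam'}$ have image in $~^LZ$ and their restrictions to $\AbGal_{\bQ_{p^2}}$ are diagonal, any intertwining element $g \in \GL_2(\FFF_p)$ must conjugate one diagonal torus-valued representation to another; hence either $g$ is itself diagonal — forcing the two characters to agree pairwise in order, i.e. $(r',\lam') = (r,\lam)$ — or $g$ is anti-diagonal (a representative of $\weyl$), which swaps the two diagonal characters, forcing $\mu_{2,\lam'^{-1}}\omega_2^{r'} = \mu_{2,\lam}\omega_2^{-pr}$ and $\mu_{2,\lam'}\omega_2^{-pr'} = \mu_{2,\lam^{-1}}\omega_2^{r}$. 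The first of these equalities of characters, read off on $\Frob^2$ and on inertia using $0 \le r,r' \le p^2-2$, gives $\lam' = \lam^{-1}$ and $r' \equiv -pr \pmod{p^2-1}$, i.e. $r' = -pr + m(p^2-1)$ for some $m \in \bZ$; one then checks the second equality is automatically implied (using $p^2 \equiv 1 \pmod{p^2-1}$, so $-p(-pr) = p^2 r \equiv r$). Finally one must confirm that in the anti-diagonal case the conjugating $g$ also sends $\psi_{r,\lam}(\Frob) = \begin{psmallmatrix}1&0\\0&\lam\end{psmallmatrix}\Frob$ to $\psi_{r',\lam'}(\Frob) = \begin{psmallmatrix}1&0\\0&\lam^{-1}\end{psmallmatrix}\Frob$ — here one must remember that conjugating $A\,\Frob$ by $g$ gives $g A (\Frob g^{-1} \Frob^{-1})\,\Frob = gA\det(g)g^{-1}\,\Frob$, and for $g$ anti-diagonal this works out to swap the diagonal entries of $A$ and invert appropriately, matching $\lam \leftrightarrow \lam^{-1}$; this is the consistency check that the $\Frob$-component and the $\AbGal_{\bQ_{p^2}}$-component transform compatibly under the same $g$.

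The step I expect to be the main obstacle is keeping track of the $\Frob$-twist in the cocycle computation: because $\Frob$ acts on $\GL_2(\FFF_p)$ by the nontrivial automorphism $g \mapsto \det(g)^{-1}g$ rather than trivially, the entry-wise bookkeeping of how $\mu_{2,\lam}\omega_2^r$ is moved around — and in particular how the ``$-p$'' exponent and the inversion $\lam \mapsto \lam^{-1}$ arise simultaneously — is delicate, and one has to be careful that the normalisation of $\Frob$ (via $p^{-1}$ under the reciprocity map, as fixed in Section~\ref{section-intro}) and of $\varpi_2$ are used consistently so that $\omega_2(\Frob^2) = 1$ and the fundamental character identities hold. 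Everything else is routine linear algebra over $\FFF_p$ combined with the classification of characters of $\AbGal_{\bQ_{p^2}}$ already recalled in the excerpt.
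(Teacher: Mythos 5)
Your proposal is correct in substance, but it takes a genuinely different route from the paper: the paper's proof of this proposition is purely a citation (the characterisation of non-supercuspidal parameters is quoted from Kozio\l's Proposition~6.17 and the equivalence classes from his Lemma~6.18), whereas you reconstruct the underlying computation from the definition of $~^LG$. Your reconstruction is sound: writing $\phi(\Frob) = A\Frob$ with $A$ diagonal (which one may assume after $\hat G$-conjugation, since non-supercuspidality means $\phi$ factors through $~^LZ$), the identity $\phi(\Frob)\phi(h)\phi(\Frob)^{-1} = \phi(\Frob h\Frob^{-1})$ together with $\Frob g\Frob^{-1} = \det(g)^{-1}g$ forces $\chi_1(\Frob h\Frob^{-1}) = \chi_2(h)^{-1}$, which combined with $\omega_2 \mapsto \omega_2^p$ under $\Frob$-conjugation yields exactly the exponents $r$ and $-pr$ and the entries $1,\lam$; the check $\phi(\Frob)^2 = \det(A)^{-1}A^2 \cdot \Frob^2$ against $\phi_0(\Frob^2)$ is consistent, and a further conjugation by a scalar-times-diagonal matrix normalises $A$ to $\begin{psmallmatrix}1 & 0\\ 0 & \lam\end{psmallmatrix}$. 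Your computation $gA\det(g)g^{-1}$ for the effect of conjugation on the $\Frob$-component is also the right one, and an anti-diagonal $g$ with $bc = -\lam^{-1}$ does realise the swap $(r,\lam)\mapsto(-pr,\lam^{-1})$. What your route buys is a self-contained argument; what the paper buys is brevity by outsourcing to \cite{Koziol}.

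One small gap to patch in the equivalence direction: your dichotomy ``the intertwiner $g$ is diagonal or anti-diagonal'' only follows when $\chi_1 \neq \chi_2$. When $\mu_{2,\lam^{-1}}\omega_2^{r} = \mu_{2,\lam}\omega_2^{-pr}$ (i.e. $\lam^2 = 1$ and $(p+1)r \equiv 0 \bmod p^2-1$), the restriction to $\AbGal_{\bQ_{p^2}}$ is scalar and $g$ can be arbitrary in $\GL_2(\FFF_p)$. The conclusion still holds there — the two listed options for $(r',\lambda')$ coincide in this degenerate case, and the relation $gA\det(g)g^{-1} = A'$ together with $\phi_0' = \phi_0$ still forces $\lambda' = \lambda$ and $r' \equiv r$ — but you should treat this case separately rather than rely on the torus-normaliser argument.
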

\begin{proof}
The characterisation of non-supercuspidal parameters comes from \cite[Proposition 6.17]{Koziol}. The characterisation of equivalence classes of such parameters is done in \cite[Lemma 6.18]{Koziol}.
\end{proof}

As mentioned earlier, we would like to connect these parameters to $C$-groups: this is why we now define the notion of $C$-parameters (also called $^CG$-valued Langlands parameters), and explain how they relate to the (non-supercuspidal) Langlands parameters defined above. In the next definition, we write $^C\hat{G}$ for the left-hand factor {$( \rm{GL}_{2}(\FFF_{p}) \times \FFF^{\times}_{p}) / \langle(-I_{2}, -1) \rangle$} appearing in the definition of $^CG$ (see \eqref{DefinitionCgroup}).
\begin{Deff}
\label{DefCparameter}
A {\it $C$-parameter} is a group homomorphism $\phi : \AbGal_{\bQ_p} \to ~^CG$ such that the composition of $\phi$ with the canonical projection $^CG \to \AbGal_{\bQ_p}$ is the identity map.
We say that two $C$-parameters are {\it equivalent} if they are conjugate by an element of $\dualC$.
\end{Deff}
Under \eqref{eqn-Cgrpiso}, we can (and will) consider $C$-parameters to take values in $(\GL_2(\overline{\bF}_p) \times \overline\bF_p^\times) \rtimes \AbGal_{\bQ_p}$; hence we will also use isomorphism \eqref{eqn-Cgrpiso} to identify $^C\hat{G}$ with $\GL_2(\overline{\bF}_p) \times \overline\bF_p^\times$. This point of view allows for a more straightforward connection with Langlands parameters, as the latter take values in $\GL_{2}(\FFF_{p}) \rtimes \Gamma_{\Q_{p}}$. Similarly to what was done in Proposition \ref{ClassifNonSupercuspLParameters} for Langlands parameters, we define $C$-parameters $\widetilde{\psi}_{r,\lambda}$, indexed by pairs $(r, \lambda) \in  \bZ \times \FFF_{p}^{\times}$ with $0 \leq r \leq p^{2}-2$, by setting
\begin{eqnarray*} 
 \nonumber \widetilde{\psi}_{r, \lam}(\Frob) &=& \left(\begin{pmatrix} 1 & 0\\0 & \lam\end{pmatrix}, 1\right) \Frob \text{ and}\\
\nonumber \widetilde{\psi}_{r, \lam}(h) &=& \left(\begin{pmatrix} \mu_{2, \lam^{-1}}\omega_2^{r}(h) & 0\\ 0 & \mu_{2, \lam}\omega_2^{-pr - (p + 1)}(h)\end{pmatrix}\omega_1(h), \omega_1(h)\right) h
\end{eqnarray*} 
for any $h \in \Gamma_{\Q_{p^{2}}}$. 

We observe that, by construction, there is a one-to-one correspondence between these $C$-parameters and non-supercuspidal Langlands parameters as given in Proposition \ref{ClassifNonSupercuspLParameters}. In particular, equivalence classes of $C$-parameters are characterised in the same way as equivalence classes of Langlands parameters are in the last part of Proposition \ref{ClassifNonSupercuspLParameters}.

\subsection{A Langlands correspondence for non-supercuspidal representations}
Historically, the first group for which the formulation of a (classical or $p$-modular) local Langlands correspondence involves actual packets (i.e. is not a one-to-one correspondence) is the special linear group $\SL_{2}$. Thus before we discuss $L$-packets for $G$ in the present setting, we briefly review the logic that goes into making analogous definitions for $\SL_{2}$. In the classical setting (see the introduction of \cite{LL79}), packets for $\SL_2$ can be seen (on the automorphic side) as an incarnation of the action of $\GL_{2}$ by conjugation on smooth representations of $\SL_{2}$, as most packets correspond to $\GL_{2}$-orbits of irreducible representations. Each of these so-called $L$-packets corresponds to an equivalence class of group homomorphisms $\Gamma_{\QQ_{p}} \to \PGL_2(\CC)$, which in turn can be thought of as the set of Galois representations lifting a fixed projective Galois representation.

In the $p$-modular setting, we can follow the same philosophy to define a local Langlands correspondence for supercuspidal representations of $\SL_{2}(\QQ_{p})$ \cite[Section 4.2]{Abdellatif12}: in this case, $L$-packets consist of $\GL_{2}(\QQ_{p})$-orbits of irreducible supercuspidal representations of $\SL_{2}(\QQ_{p})$ over $\FFF_{p}$, but these packets can also be defined as Jordan-H$\ddot{\text{o}}$lder factors of a given irreducible supercuspidal representation of $\GL_{2}(\QQ_{p})$. Note that, unlike in the classical case, there is no multiplicity-one result for $p$-modular representations (compare \cite[Lemma 2.6]{LL79} with \cite[Th\'eor\`eme 4.12 (2)]{Abdellatif12} for $r = (p-1)/2$). 

The non-supercuspidal case for $\SL_2(\bQ_p)$ is even trickier than the supercuspidal case. First note that, as twisting a representation by {a} character of $\GL_2(\bQ_p)$ does not change its restriction to $\SL_2(\bQ_p)$, non-isomorphic representations of $\GL_{2}(\QQ_{p})$ can be isomorphic as representations of $\SL_{2}(\QQ_{p})$. This can actually occur even for pairs of representations that remain non-isomorphic after twisting by any character of $\GL_{2}(\QQ_{p})$, see \cite[Th\'eor\`emes 2.16 and 4.12]{Abdellatif12}. Moreover, \cite[Proposition 2.8]{Abdellatif12} shows that non-supercuspidal representations are fixed (up to isomorphism) under the action of $\GL_{2}(\QQ_{p})$ by conjugation, which prevents us from defining $L$-packets simply as $\GL_{2}$-orbits of irreducible smooth representations. Indeed, doing so would imply that any natural map from irreducible representations of $\SL_2(\bQ_p)$ to the set of equivalence classes of Langlands parameters for $\SL_2(\bQ_p)$ (i.e. of projective Galois representations in this context) would map distinct $L$-packets to the same projective representation, which does not make sense for a correspondence. This motivates the introduction of (reducible) semisimple representations in the non-supercuspidal setting \cite[Definition 4.13]{Abdellatif12}: their factors correspond to irreducible representations that should match with the same projective representation, and their construction ensures that they are stable under $\GL_{2}$-conjugation, hence that the original concept of an $L$-packet is in some way preserved. Note that \cite[Th\'eor\`eme 3.18]{Abdellatif12} relates these semisimple representations with semisimplifications of well-known smooth representations of $\SL_{2}(\QQ_{p})$.

Following the same philosophy, we are led to make the following definitions for $G$.
\begin{Deff}
\label{DefinitionSemiSimpleNonSupercusp}
{Given $(r, \lam) \in \bZ \times \FFF_{p}^{\times}$ with $0 \leq r \leq p^{2} -2$, we write $\pi(r, \lambda)$ for the semisimplification of $\Ind_{B}^{G}(\mu_{\lambda^{-1}}\omega^{-pr})$.}
\end{Deff}
For instance, we have, for $0 \leq r \leq p-1$ and $\lambda \in \FFF_{p}^{\times}$:
\begin{eqnarray*}
\nonumber \pi(r, \lambda)= 
\begin{cases}
\Ind_B^G(\mu_{\lambda^{-1}} \omega^{-pr}) & \text{ if } (r,\lambda) \neq (0,1), (p-1,1), \\ 
 (\omega^p \circ \det ) \oplus  ((\omega^p \circ \det) \otimes \St_G)   & \text{ if } (r,\lambda) = (p-1,1), \\
\1_{G} \oplus \St_G  & \text{ if } (r,\lambda) = (0,1).
\end{cases}
\end{eqnarray*}

\begin{rem}
Although Definition \ref{DefinitionSemiSimpleNonSupercusp} is uniform for all pairs $(r, \lam)$, the representation $\pi(0,1)$ actually naturally arises as the semisimplification of a non-trivial extension of $\1_G$ by $\St_G$, not of a parabolically induced representation. Though it has no impact in the definition of a {\it semisimple} Langlands correspondence, it must be kept in mind for future work. For more information on this phenomenon, {the reader may} refer to \cite[Th\'eor\`emes 3.16 and 3.18]{Abdellatif12}, where the corresponding objects and phenomena for $\GL_{2}$ and $\SL_{2}$ are introduced.
\end{rem}

Note that assuming $0 \leq r \leq p-1$ is not restrictive in the context of Langlands correspondences, since Proposition \ref{ClassifNonSupercuspLParameters} shows that an equivalence class of Langlands parameters (or, equivalently, of $C$-parameters) always contains a parameter whose index $r$ satisfies this condition. Also note that, similarly to what happens for $\SL_{2}$ and following \cite[Proposition 5.10]{Koziol}, each of these representations is fixed (up to isomorphism) under the action of 
\[\displaystyle \GU(1,1)(\QQ_{p^{2}}/\QQ_{p}) := \left\{g \in \GL_{2}(\QQ_{p^{2}}) \ \vert \   g^{*} \begin{pmatrix} 0 & 1  \\ 1 & 0  \end{pmatrix} g = \kappa  \begin{pmatrix} 0 & 1  \\ 1 & 0  \end{pmatrix} \text{for some } \kappa \in \QQ_{p}^{\times} \right\} \ . \]
According to Proposition \ref{ClassifNonSupercuspLParameters}, distinct pairs $(r, \lam)$ and $(r', \lam')$ may correspond to equivalent $C$-parameters, so we are led to  define the following semisimple representations of $G$ to obtain suitable representatives of $L$-packets.
\begin{Deff}
For $(r, \lambda) \in \bZ \times \FFF_{p}^{\times}$ with $0 \leq r \leq p-1$, we define $\Pi(r, \lambda)$ as the following semisimple representation of $G$:
\begin{equation*}
\Pi(r, \lam) := \pi(r, \lambda) \oplus (\omega^{r+1} \circ \det) \otimes \pi(p-1-r, \lam^{-1}) \ .
\end{equation*} 
For any integer $k$, we also set $\Pi(r, \lam, k) := (\omega^{k} \circ \det) \otimes \Pi(r, \lambda)$.
\end{Deff}
Note that $\Pi(r, \lambda, k)$ is an $L$-packet for $G$ in the sense of \cite[Definition 5.9]{Koziol}, since it is by construction fixed under the action of $\GU(1,1)(\QQ_{p^{2}}/\QQ_{p})$. We can now state the non-supercuspidal part of the $p$-modular semisimple Langlands correspondence for $G$ \cite[Definition A.5]{Koziol}.
\begin{Def}
\label{SemiSimpleLanglands}
The non-supercuspidal part of the semisimple $p$-modular correspondence for $G$ is the following matching between equivalence classes of $C$-parameters $\widetilde{\psi}_{r',\lambda'}$ and isomorphism classes of $L$-packets $\Pi(r,\lambda,k)$: for any $0 \leq r \leq p-1, \ \lambda \in \overline{\F}_p^{\times}$ and $0 \leq k < p+1$,
\[ \widetilde{\psi}_{(r-1)+(1-p)k,\lambda}  \longleftrightarrow \Lpacket(r, \lam, k) \ . \]
\end{Def}

\section{Deforming non-supercuspidal representations of $G$}\label{section-nonsc-deformations}
Our purpose is to study how the $p$-modular Langlands correspondence defined above behaves when it is functorially lifted to characteristic $0$, i.e. to $p$-adic representations/parameters. To define precisely what we mean by functorial lifting requires the use of deformation theory. This section gathers what is known about deformations of non-supercuspidal representations of $G$; its Galois counterpart, relative to the deformation of Galois parameters, is postponed to Section \ref{SectionDeformationParameters}.

From now on, we let $E/\QQ_{p}$ be a (large enough) finite extension of fields. We let $\OO$ be the ring of integers of $E$ and $k = \OO / \varpi\OO$ be its residue field, where $\varpi$ denotes a fixed uniformiser of $\OO$.
 We write $\Art(\Oe)$ (respectively: $\mathrm{Noe}(\OO)$; $\mathrm{Pro}(\OO)$) for the category of local Artinian (respectively: local complete Noetherian; local profinite) $\Oe$-algebras 
 {$A$ such that the structural morphism from $\OO$ to $A$ is local and induces an isomorphism between $k$ and the residue field of $A$;
 the morphisms are local $\OO$-algebra morphisms (respectively: local $\OO$-algebra morphisms; continuous local $\OO$-algebra morphisms).}

Note that $\Art(\OO)$ is the full subcategory of Artinian rings {(in $\mathrm{Pro}(\OO)$ and)} in $\mathrm{Noe}(\OO)$  and that $\mathrm{Noe}(\OO)$ is the full subcategory of Noetherian rings in $\mathrm{Pro}(\OO)$.

We write $H$ for the $\QQ_{p}$-points of a connected reductive group defined over $\QQ_{p}$. (In the sequel, $H$ will mainly be either $G$ or $T$, as defined above.)
\begin{Deff}
Let $\overline\pi$ be a representation of $H$ over $\rese$.
A \textit{lift} of $\overline\pi$ to $A \in \Art(\Oe)$ is a pair $(\pi, \phi)$ where $\pi$ is a smooth $A[H]$-module that is free over $A$, and $\phi: \pi \to \overline\pi$ is an $A[H]$-linear surjection that induces an $A[H]$-linear isomorphism 
\begin{equation*}
\pi \otimes_A k \to \overline\pi \ .
\end{equation*}
A {\it morphism of lifts} $(\pi, \phi) \to (\pi', \phi')$ is an $A[H]$-linear morphism $i: \pi \to \pi'$ such that $\phi = \phi' \circ i$. 
\end{Deff} 
For any $A \in \Art(\OO)$, we let $\Deform_{\overline\pi}(A)$ be the set of isomorphism classes of lifts of $\overline\pi$ to $A$. Then any smooth $k$-representation $\overline{\pi}$ of $H$ defines a functor 
\[ \Deform_{\overline\pi}: \Art(\Oe) \to \text{Set} \ . \] 
Considering the strong connection between non-supercuspidal representations and parabolic induction functors, it seems natural to wonder how much these deformation functors are compatible with the parabolic induction functor $\Ind_{B}^{G} : \mathrm{Mod}_{T}^{\infty} \to \mathrm{Mod}_{G}^{\infty}$. In a series of two papers \cite{HSS1, HSS2}, Hauseux--Schmidt--Sorensen addressed this question: we now recall the results we need from these papers.

\subsection{Deforming parabolically induced representations}\label{section-deform-ind}
Let $\overline\chi: T \to k^\times$ be a smooth character and let $\chi$ be a lift of $\overline\chi$ to $A \in \Art(\Oe)$. By \cite[Lemma 2.1]{HSS1}, the parabolically induced representation $\Ind_B^G\chi$ is a free $A$-module, and the natural surjection $A \to k$  induces a $k[G]$-linear isomorphism
\begin{equation*}
(\Ind_B^G \chi) \otimes_A k \simeq \Ind_B^G\overline\chi.
\end{equation*}
In other words, $\Ind_B^G\chi$ is a lift of $\Ind_B^G\overline\chi$ over $A$. Furthermore, any morphism of lifts $(\chi_1, \phi_{1}) \to (\chi_2, \phi_{2})$ of $\overline\chi$ over the same ring $A$ induces a morphism $(\Ind_B^G \chi_1, \Ind_{B}^{G}(\phi_{1})) \to (\Ind_B^G\chi_2, \Ind_{B}^{G}(\phi_{2}))$ of lifts of $\Ind_{B}^G(\overline\chi)$ over $A$. In \cite[Section 2.5]{HSS1}, Hauseux--Schmidt--Sorensen show that we actually have a morphism of functors
\begin{equation*}
\Ind_B^G: \Deform_{\overline\chi} \to \Deform_{\Ind_B^G\overline\chi} \ . 
\end{equation*}
They also give sufficient conditions for this morphism to be an isomorphism, which lead to the following statement for the group $G$.
\begin{Thm}
\label{ThmDeformationIndBbar} 
Let $\overline\chi = \mu_\lam\omega^r$ be a smooth $k$-character of $T$. Assume that $\lam \neq \pm 1$ or that $r - 1$ is not divisible by $p - 1$. Then 
\begin{equation*}
\Ind_B^G: \Deform_{\overline\chi} \to \Deform_{\Ind_B^G\overline\chi}
\end{equation*}
is an isomorphism. 
\end{Thm}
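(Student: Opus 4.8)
# Proof Proposal

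The plan is to invoke the general criterion of Hauseux--Schmidt--Sorensen for parabolic induction to commute with deformation functors, and then to verify that the specific numerical hypotheses on $(r,\lambda)$ are exactly what is needed to meet that criterion for $G = \U(1,1)(\QQ_{p^2}/\QQ_p)$. Recall from Section~\ref{section-deform-ind} that $\Ind_B^G$ always defines a morphism of functors $\Deform_{\overline\chi} \to \Deform_{\Ind_B^G\overline\chi}$; the content of \cite{HSS1, HSS2} is that this morphism is an isomorphism provided certain cohomological obstructions vanish. Concretely, the obstruction to $\Ind_B^G$ being full and faithful on lifts (and hence an isomorphism of deformation functors, by a standard formal-smoothness/tangent-obstruction argument) is controlled by the (non-)vanishing of the spaces $\Hom_{G}(\Ind_B^G\overline\chi, \Ind_B^G(\overline\chi'))$ for the relevant twists $\overline\chi'$ of $\overline\chi$ appearing in the adjunction computations --- in particular, by whether the principal series $\Ind_B^G\overline\chi$ is irreducible and whether $\overline\chi$ is "generic" in the sense that its $W$-conjugate is distinct from $\overline\chi$ after the relevant modulus twist. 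So the first step is to state precisely, citing the appropriate result in \cite{HSS1} (and its refinement in \cite{HSS2}), the sufficient condition: roughly, that $\overline\chi$ is not fixed by the pertinent Weyl-twist-with-modulus-character operation, equivalently that $\Ind_B^G\overline\chi$ is irreducible and does not appear as a constituent of an induction from a proper twist of itself.

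The second step is the translation of this condition into the combinatorics of characters of $T$ worked out in Section~\ref{section-repsofG}. Writing $\overline\chi = \mu_\lambda \omega^r$, one computes the $\weyl$-conjugate $\overline\chi^{\weyl}$ and the relevant modulus character $\delta_B$ of $B$; the "bad" locus where the HSS criterion can fail is precisely where $\overline\chi^{\weyl}$ equals $\overline\chi$ up to the modulus twist, i.e. where $\overline\chi \delta_B^{\pm 1}$ is $\weyl$-invariant, equivalently where $\overline\chi$ extends to a character of $G$ or differs from such by the modulus character. By Lemma~\ref{lemCharG}, $\mu_\lambda\omega^r$ extends to a character of $G$ if and only if $\lambda = 1$ and $p-1 \mid r$; the analogous computation with the modulus twist built in shifts the exponent by a fixed amount, and one checks that the union of these degenerate conditions is exactly: $\lambda \in \{+1,-1\}$ together with $(p-1) \mid (r-1)$. (The appearance of $r-1$ rather than $r$ and of $\pm 1$ rather than just $1$ comes from the half-integral modulus character $\delta_B^{1/2}$, which for $G$ introduces exactly a shift by $1$ in the $\omega$-exponent and a sign ambiguity in the unramified part $\mu_\lambda$.) Hence the hypothesis "$\lambda \neq \pm 1$ or $r - 1$ is not divisible by $p-1$" says precisely that we are outside this degenerate locus, so the HSS criterion applies and the morphism is an isomorphism.

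The main obstacle is the bookkeeping in the second step: one must pin down exactly which twist (modulus, half-modulus, or trivial) enters the HSS hypothesis in our normalisation of parabolic induction, and then chase the resulting shift through the identification of characters of $T$ with pairs $(r,\lambda)$, being careful about the sign of $\lambda$ (which records the value on $t(p)$, an element whose $\weyl$-conjugate is $t(p)^{-1} = t(\conj(p)^{-1})$, producing the $\lambda \mapsto \lambda^{-1}$ that forces $\lambda^2 = 1$ in the degenerate case) and about the value of $r$ modulo $p^2-1$ versus modulo $p-1$ (the latter being what controls extendability to $G$, via the $\zeta^{p+1} \in \QQ_p^\times$ argument in the proof of Lemma~\ref{lemCharG}). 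Once this dictionary is set up correctly, the verification that the excluded set matches "$\lambda = \pm 1$ and $(p-1)\mid(r-1)$" is a short computation. I would therefore structure the write-up as: (i) recall the HSS sufficient condition verbatim; (ii) identify, via $\delta_B$ and Lemma~\ref{lemCharG}, the degenerate characters; (iii) observe that the stated hypothesis on $(r,\lambda)$ excludes exactly those; (iv) conclude by applying the HSS theorem. No genuinely hard new input is needed --- the work is entirely in matching normalisations.
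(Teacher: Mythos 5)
Your proposal follows essentially the same route as the paper: invoke the Hauseux--Schmidt--Sorensen criterion (the paper uses \cite[Corollary 4.18]{HSS1}, whose condition is $\overline\chi \neq \overline\chi^{\al}\otimes(\omega^{-1}\circ\al)$), compute the relevant twist explicitly for $G$, and check that the hypothesis on $(r,\lambda)$ places $\overline\chi$ outside the degenerate locus $\{\lambda=\pm1 \text{ and } (p-1)\mid(r-1)\}$, which you identify correctly. One caution for the write-up: your heuristic that the obstruction is governed by irreducibility of $\Ind_B^G\overline\chi$ is misleading --- for $\lambda=-1$ and $(p-1)\mid(r-1)$ the principal series is irreducible yet the criterion fails --- and there is no literal $\delta_B^{1/2}$ in characteristic $p$; the paper instead computes the root $\al$ via the adjoint action on the Lie algebra, finding $\al(t(x))=N(x)^{-1}$, so the condition reduces to finding $y\in N(\QQ_{p^2}^\times)=\langle p^2\rangle\times\ZZ_p^\times$ with $\overline\chi(y)\neq\omega(y)$, whence the two cases $\lambda\neq\pm1$ and $(p-1)\nmid(r-1)$.
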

Note that this statement requires us to assume that $E/\QQ_{p}$ is large enough, to ensure that $\overline{\chi}$ actually takes values in the finite field $k$, and not only in $\overline{k}$.
\begin{proof}
Let $\Delta = \{\al\}$ be the set of positive roots for $G$ with respect to $(B^{-}, T)$, and write $s_\al$ for the corresponding reflection in the Weyl group. 
Following \cite[Corollary 4.18]{HSS1} for $F = \QQ_{p}$, it suffices to check that 
\begin{equation}\label{eqn-HSScondition}
\overline\chi \neq \overline\chi^\al \otimes (\omega^{-1}\circ \al).
\end{equation}
Since $\overline\chi^\al(x) = \overline\chi(c(x)^{-1})$ for all $x \in \bQ_{p^2}$, checking (\ref{eqn-HSScondition}) is equivalent to find some  $x \in \bQ_{p^2}$ such that
\begin{equation*}
\overline\chi(xc(x)) \neq (\omega^{-1} \circ \al)(x), \text{ i.e. such that } \overline\chi(N(x)) \neq (\omega^{-1} \circ \al)(x) \ , 
\end{equation*}
where $N: \bQ_{p^2} \to \bQ_p$ denotes the norm map.
We now want to explicitly describe $\al$: to do this, let us recall that we fixed an element $\varepsilon \in \QQ_{p^{2}}$ such that $\{ 1, \varep\}$ is a basis for $\bQ_{p^2}$ over $\bQ_p$ and $c(\varep) = -\varep$. If $\frakg$ denotes the Lie algebra of $G$, then a basis for the Lie algebra of $G$ over $\bQ_p$ is given by 
\begin{equation*}\left\{
\begin{pmatrix}
1 & 0\\
0 & -1
\end{pmatrix},
\begin{pmatrix}
\varep & 0\\
0 & \varep
\end{pmatrix},
\begin{pmatrix}
0 & \varep\\
0 & 0
\end{pmatrix},
\begin{pmatrix}
0 & 0\\
\varep & 0
\end{pmatrix}
\right\} \ .
\end{equation*}
As any element $t(x) = \begin{psmallmatrix}
x & 0\\
0 & c(x)^{-1}
\end{psmallmatrix}$
of $T$ acts on $\begin{psmallmatrix} 0 & 0\\ \varepsilon & 0 \end{psmallmatrix}$ as multiplication by
$x^{-1}c(x)^{-1} = N(x)^{-1}$, we see that $\al$ is given by the inverse of the norm map, hence checking condition (\ref{eqn-HSScondition}) now boils down to finding some $x \in N(\bQ_{p^2}^\times)$ such that $\overline\chi(x) \neq \omega(x)$. But we have, for any $u \in \ZZ_{p}^{\times}$ and any integer $n$,
\[ \overline\chi(up^{2n}) = \lam^{2n}\overline u^r \ . \]
If $\lam \neq \pm 1$, then $\overline\chi(p^2) \neq \omega(p^2)$. If $r - 1$ is not divisible by $p - 1$, then there exists some unit $u \in \bZ_p^\times$ such that $\omega(u) \neq \overline\chi(u)$. As $N(\bQ_{p^2}^\times) = \langle p^2 \rangle \times \bZ_p^\times$, the result follows.
\end{proof}
Further note that under the assumptions of Theorem \ref{ThmDeformationIndBbar}, the deformations of $\overline{\chi}$ (hence of $\Ind_{B}^{G}(\overline{\chi})$) are well-understood: according to \cite[Proposition 4.17]{HSS1}, $\Deform_{\Ind_{B}^{G}(\chi)}(A)$ is in natural bijection with $\mathrm{Hom}_{\mathrm{Pro(\OO)}}(\Lambda, A)$ for any $A \in \Art(\OO)$, the converse map being given by $\psi \mapsto \Ind_{B}^{G}(\psi \circ \chi^{\mathrm{univ}})$, and this bijection is functorial in $A \in \Art(\OO)$. In this statement, $\Lambda$ denotes the Iwasawa algebra associated to the torus $T$ (see \cite[Section 19.7]{Schneider} for a precise definition) and $\chi^{\mathrm{univ}} : T \to \Lambda$ is the so-called {\it universal deformation} of $\overline{\chi}$ (see \cite[Proposition 4.17]{HSS1} for the explicit formula defining $\chi^{\mathrm{univ}}$). To completely understand deformations of parabolically induced representations, we now have to answer the following open question.
\begin{What}
What are the deformations of $\Ind_{B}^{G}(\mu_{\lambda}\omega^{r})$ when $\lambda = \pm 1$, or when $p-1$ divides $r-1$?
\end{What}
Note that a direct comparison with Corollary \ref{CorPrincipalSeries} shows that the major part of the representations covered by this question are {\it irreducible} representations of $G$, hence of greatest interest in the Langlands program.

\subsection{Deformations of special series representations}

Given $A \in \Art(\Oe)$ and a smooth character $\sigma: T \to A^\times$ that extends to a smooth character of $G$, we can proceed as in the $p$-modular setting (see \cite[Section 5.3.2]{Abdellatif}) to show that $\sigma$ is a subrepresentation of $\Ind_{B}^{G}(\sigma)$. We can hence define the Steinberg representation as the quotient representation
\begin{equation*}
\St_{B}^{G}(\sigma) := \Ind_{B}^{G}(\sigma)/\sigma.
\end{equation*}
Note that, if $A = k$ and $\sigma$ is a smooth $k$-character of $T$ that extends to a smooth $k$-character of $G$, then $\St_{B}^{G}(\sigma) \simeq \St_{G} \otimes \, \sigma$, where $\St_{G}$ denotes the Steinberg representation we defined in Section \ref{section-steinberg}. Following \cite[Section 9]{HSS2}, we obtain, for any smooth character $\chi : G \to k^{\times}$, a natural transformation
\begin{equation}
\label{DeformationSteinberg}
\St_{B}^{G}: \Deform_{\overline\chi} \to \Deform_{\St_{G} \otimes \overline{\chi}} \ .
\end{equation}
\begin{Thm}
The natural transformation \eqref{DeformationSteinberg} is an isomorphism of functors.
\end{Thm}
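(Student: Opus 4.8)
The plan is to deduce the isomorphism from Theorem~\ref{ThmDeformationIndBbar} and the long exact sequence arising from the short exact sequence $0 \to \overline\chi \to \Ind_B^G\overline\chi \to \St_G \otimes \overline\chi \to 0$. First I would note that $\overline\chi = \mu_\lambda\omega^r$ extends to a smooth $k$-character of $G$ exactly when $\lambda = 1$ and $p-1 \mid r$ (Lemma~\ref{lemCharG}), so in particular $r - 1$ is \emph{not} divisible by $p-1$ (as $p \neq 2$), and hence the hypotheses of Theorem~\ref{ThmDeformationIndBbar} are automatically satisfied for any character of $T$ that extends to $G$. Thus $\Ind_B^G : \Deform_{\overline\chi} \to \Deform_{\Ind_B^G\overline\chi}$ is already known to be an isomorphism in the case at hand, and the universal deformation ring of $\Ind_B^G\overline\chi$ is the Iwasawa algebra $\Lambda$ of $T$, with $\chi^{\mathrm{univ}}$ the universal lift of $\overline\chi$.

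Next I would set up, for each $A \in \Art(\OO)$, a map between deformation functors using the compatibility of $\St_B^G$ with short exact sequences established in \cite[Section~9]{HSS2}: a lift $\chi$ of $\overline\chi$ to $A$ gives the exact sequence $0 \to \chi \to \Ind_B^G\chi \to \St_B^G(\chi) \to 0$ of $A[G]$-modules, all three terms free over $A$, reducing mod $\varpi$ to the corresponding sequence over $k$; this is exactly the content packaged in the natural transformation \eqref{DeformationSteinberg}. The strategy for surjectivity (essential surjectivity of $\St_B^G$ on lifts) is: given a lift $\tau$ of $\St_G \otimes \overline\chi$ to $A$, I want to produce a lift $\chi$ of $\overline\chi$ with $\St_B^G(\chi) \cong \tau$. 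The natural candidate is to take the pullback/extension of $\tau$ along the known lift structure — more precisely, one shows $\Ext^1_{A[G]}(\St_B^G(\chi), \chi) \to \Ext^1_{k[G]}(\St_G\otimes\overline\chi, \overline\chi)$ behaves well, so that the unique (up to the relevant equivalence) extension on the special fibre lifts, and the middle term of the lifted extension is forced to be $\Ind_B^G\chi$ by the already-known deformation theory of $\Ind_B^G\overline\chi$. For injectivity one runs the same bookkeeping in reverse: two lifts of $\overline\chi$ inducing isomorphic lifts of $\St_G\otimes\overline\chi$ induce isomorphic lifts of $\Ind_B^G\overline\chi$ (after identifying the sub via the functoriality in \eqref{DeformationSteinberg}), hence isomorphic lifts of $\overline\chi$ by Theorem~\ref{ThmDeformationIndBbar}.

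Concretely, the cleanest route is representability: show $\Deform_{\St_G\otimes\overline\chi}$ is pro-represented by the same Iwasawa algebra $\Lambda$, with universal object $\St_B^G(\chi^{\mathrm{univ}})$, and that the natural transformation \eqref{DeformationSteinberg} corresponds to the identity on $\Hom_{\mathrm{Pro}(\OO)}(\Lambda, -)$. For this one needs (i) that $\St_B^G(\chi^{\mathrm{univ}})$ is genuinely a lift of $\St_G\otimes\overline\chi$ over $\Lambda$ (flatness of $\St_B^G(\chi^{\mathrm{univ}})$ over $\Lambda$, which follows since it is a quotient of the $\Lambda$-free module $\Ind_B^G\chi^{\mathrm{univ}}$ by the $\Lambda$-free submodule $\chi^{\mathrm{univ}}$, and the reduction statement), and (ii) the universal property, i.e. every lift of $\St_G\otimes\overline\chi$ to $A$ is pulled back from a unique $\Lambda \to A$. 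Part (ii) is where I expect the main work, and it is essentially the content of \cite[Section~9]{HSS2} applied to $G$: the key input is the vanishing (or one-dimensionality) of the relevant $\Ext$ and $\Hom$ groups computed in \cite{HSS2} for parabolic induction of a character extending to $G$, which controls both the automorphisms of the extension and the obstruction to lifting it; once these cohomological facts are in hand, the dévissage is formal.

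The main obstacle is therefore the cohomological input rather than the structural argument: one must verify that the $\Ext^1$-group measuring the ``extra'' extensions between $\St_B^G(\chi)$ and $\chi$ beyond those coming from $\Ind_B^G$ vanishes, equivalently that $\Hom_{k[G]}(\St_G\otimes\overline\chi, \overline\chi) = 0$ and $\Ext^1_{k[G]}(\overline\chi, \St_G\otimes\overline\chi)$ is as expected, so that the functor $\chi \mapsto \St_B^G(\chi)$ neither collapses nor misses lifts. For the group $G = \U(1,1)(\QQ_{p^2}/\QQ_p)$ these follow from the structure of $\Ind_B^G(\1_B)$ recalled in Section~\ref{section-steinberg} (length two, with $\1_G$ as the unique subobject) together with the already-cited results of Hauseux--Schmidt--Sorensen, and I would simply invoke \cite[Section~9]{HSS2} for the general mechanism and point to Theorem~\ref{ThmDeformationIndBbar} and Lemma~\ref{lemCharG} for why the hypotheses are met in our case.
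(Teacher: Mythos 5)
Your proposal lands in the right place bibliographically --- the paper's entire proof is the observation that \cite[Proposition 10]{HSS2} applies verbatim because the one-dimensional representation $\overline\chi$ is admissible --- but the reconstruction you build around Theorem~\ref{ThmDeformationIndBbar} has a genuine gap. The source of \eqref{DeformationSteinberg} is $\Deform_{\overline\chi}$ for $\overline\chi$ a character of \emph{$G$}: its lifts are smooth characters $G \to A^{\times}$. Theorem~\ref{ThmDeformationIndBbar}, by contrast, concerns deformations of a character of $T$, and these two functors are genuinely different: $T \simeq \QQ_{p^2}^{\times}$ admits many lifts of $\overline\chi|_T$ (e.g.\ the unramified twists $\mu_{\lambda}\,\overline\chi|_T$ with $\lambda \equiv 1 \bmod \mathfrak{m}_A$, $\lambda \neq 1$) which by the argument of Lemma~\ref{lemCharG} do not extend to $G$; on these, $\St_B^G$ is not even defined, and the corresponding lifts $\Ind_B^G(\mu_{\lambda}\overline\chi|_T)$ of $\Ind_B^G(\overline\chi|_T)$ contain no character as a subrepresentation. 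Consequently $\Deform_{\St_G\otimes\overline\chi}$ cannot be identified with $\Deform_{\Ind_B^G(\overline\chi|_T)} \simeq \Hom_{\mathrm{Pro}(\OO)}(\Lambda, -)$ as your ``cleanest route'' requires, and that route already fails at step (i): the universal $T$-deformation $\chi^{\mathrm{univ}}$ does not extend to $G$, so $\St_B^G(\chi^{\mathrm{univ}})$ is undefined. Carried out literally, the d\'evissage through $\Ind_B^G$ would prove a false identification of functors.

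The second gap is in the surjectivity step, which is circular as written: to show that a given lift $\tau$ of $\St_G\otimes\overline\chi$ over $A$ lies in the image, you study $\Ext^1_{A[G]}(\St_B^G(\chi),\chi)$ --- but $\chi$ is precisely the lift you are trying to produce from $\tau$. Lifting the extension class once $\chi$ is given only re-establishes the existence of the natural transformation, not its essential surjectivity. What is needed is a functor going the other way, applied to $\tau$ itself; this is exactly what the mechanism of \cite{HSS2} supplies, and the only hypothesis it requires is admissibility of $\overline\chi$. That is why the paper's proof is a single sentence citing \cite[Proposition 10]{HSS2}: no input from Theorem~\ref{ThmDeformationIndBbar} or Lemma~\ref{lemCharG} is used, and indeed cannot be used in the way you propose.
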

\begin{proof}
This is a straightforward application of \cite[Proposition 10]{HSS2}, as one-dimensional representations are obviously admissible.
\end{proof}
In other words, understanding deformations of representations in the special series amounts to understanding deformations of smooth $k$-characters of $G$, and the latter are once again well-understood by \cite[Proposition 4.17]{HSS1}. Following \cite[Corollary 15]{HSS2}, and using the same notation as {in Section \ref{section-deform-ind}}, we obtain a similar statement to the one we get for parabolically induced representations: for any smooth character $\overline{\chi} : G \to k^{\times}$ and any $A \in \mathrm{Noeth}(\OO)$, we have a natural bijection between $\Deform_{\St_{G} \otimes \overline{\chi}}(A)$ and $\mathrm{Hom}_{\mathrm{Pro(\OO)}}(\Lambda, A)$, whose converse map is given by $\psi \mapsto \St_{B}^{G}(\psi \circ \chi^{\mathrm{univ}})$, and this bijection is moreover functorial in $A \in \Art(\OO)$.

\section{Deforming Galois parameters} 
\label{SectionDeformationParameters} 
In this section, we summarise what is known about deformations of the parameters defined in Section~\ref{section-Cparams-def} and explain what questions we aim to solve. One can view this section as the Galois counterpart of Section \ref{section-nonsc-deformations} with the difference that, as mentioned in the introduction, a priori we make no {specific} assumption on the Galois parameters here
(while Section \ref{section-nonsc-deformations} only holds for non-supercuspidal representations). We follow \cite{KM} for most of the section.
 
 We {start by} introducing $C$-parameters in characteristic zero as well as the notion of an inertial type. We also introduce genericity for $C$-parameters as we will need to impose some genericity conditions on the Galois parameters we want to deform. We recall some notions from deformation theory, specialised to our setting. We then show how one transfers from $C$-parameters to genuine $p$-adic Galois representations. It turns out that we can study the deformations of these representations by relating them to Kisin modules. We end with demonstrating to what extent we can determine the deformations of our Galois parameters explicitly using Kisin modules.

We keep the same notation as above and we furthermore fix an embedding $\sigma_{0} : \Q_{p^{2}} \hookrightarrow E$, as well as a $(p^{2}-1)^{\text{th}}$ root $\pi$ of $-p$ in $E$ (which exists as $E/\bQ_p$ is assumed to be large enough). We let $L = \Q_{p^{2}}(\pi)$ and write $\widetilde{\omega}_{\pi} : \Gal(L/\Q_{p^{2}}) \to \Z_{p^{2}}^{\times}$ for the character given by $\widetilde\omega_\pi(g) = \frac{g(\pi)}{\pi}$. We also set $\widetilde{\omega}_{2} := \sigma_{0} \circ \widetilde{\omega}_{\pi} : \Gal(L/\Q_{p^{2}}) \to \OO^{\times}$. {Note that $\widetilde\omega_\pi$ and $\widetilde\omega_2$ do not depend on the choice of $\pi$.}

\subsection{Definition of $C$-parameters in characteristic zero}
To define a suitable notion of $C$-parameters in characteristic $0$, we first have to define the $C$-group in characteristic $0$, which is actually straightforward. Indeed, as mentioned in Section \ref{SectionDefinitionDualGroupsG}, the group $^CG$ arises as the $\FFF_{p}$-points of a certain algebraic group, which we will call $^C\GG$. Thus we may extend our definition to define $^C\GG(R)$ for any topological $\ZZ_{p}$-algebra $R$ as follows (see \cite[Section 2.3]{KM} for further details):
\[ 
^C\hat{\GG}(R) := \left( \GL_{2}(R) \times R^{\times}\right) / \langle (-I_{2}, -1) \rangle \text{ and } 
^C\GG(R) := {}^C\hat{\GG}(R) \rtimes \Gamma_{\Q_{p}},\]
where $\Gamma_{\Q_{p}}$ still acts via formulae \eqref{ActionGaloisLgroupFrob} and \eqref{ActionGaloisLgroupInertia}. 
Note that we will continue to write $^C\hat{G}$ (respectively $^CG$) instead of $^C\hat{\GG}(\FFF_{p})$ (respectively $^C{\GG}(\FFF_{p})$). Also note that {the} isomorphism \eqref{eqn-Cgrpiso} still holds in this setting, so we can (and will) identify $^C\hat{\GG}(R)$ with $\GL_{2}(R) \times R^{\times}$ and $^C\GG(R)$ with $(\GL_{2}(R) \times R^{\times}) \rtimes \Gamma_{\Q_{p}}$. {The topology on $^C\GG(R)$ is the one inherited from $R$ when $^C\hat{\GG}$ is considered as algebraic group. Note that this topology coincides with the usual topology on $ \left( \GL_{2}(R) \times R^{\times}\right) / \langle (-I_{2}, -1) \rangle \rtimes \AbGal_{\bQ_p}$ for the rings considered in the remainder of the paper.} 
We can now define (equivalence classes of) parameters as follows, where $R$ denotes a topological $\Z_{p}$-algebra.
\begin{Deff}
\label{DefCparameterCar0}
An {\it ($R$-valued) $C$-parameter} is a continous homomorphism $\rho : \Gamma_{\Qp} \to {}^C\GG(R)$ such that the composition of $\rho$ with the canonical projection ${}^C\GG(R) \to \Gamma_{\Q_{p}}$ is the identity map. We say that two $C$-parameters are {\it equivalent} if they are conjugate by an element of $^C\hat{\GG}(R)$.
\end{Deff}
Note that formula \eqref{ActionGaloisLgroupInertia} ensures that this definition is equivalent to the one given in \cite[Definition 4.1]{KM}. We also note that there are similar definitions for the $R$-points of the $L$-group as well as $R$-valued $L$-parameters, and that the $R$-valued analogue of {the} isomorphism \eqref{eqn-Cgrpiso} establishes a natural connection between $R$-valued {Langlands parameters and $R$-valued} $C$-parameters. 

\begin{rem}
\label{RemConnectionCparChar0GalRep}
As in the $p$-modular setting (see Definition \ref{DefinitionRepGalForLparam}), and following \cite[Section 5.3.2]{KM}, there is a natural bijection between $C$-parameters in characteristic $0$ and genuine Galois representations enriched with extra data. This bijection will be made explicit in Section \ref{SectionParametersTwdGalois}.
\end{rem}

\subsection{Inertial types and generic $C$-parameters}
We now characterise the $C$-parameters that will be the focus of the remainder of the paper. A common way to have some control on Galois objects consists in putting some conditions on their restrictions to the inertia subgroup, and we will proceed in this way to define a suitable notion of genericity, inspired by similar notions for genuine representations of absolute Galois groups. 
We thus introduce the notion of an inertial type, which appears in different useful settings that will be described in this subsection. 

\subsubsection{Genericity for classical inertial types}
\label{sssec:generic_types}
We start by recalling the definition of an inertial type. This definition serves as a guide when defining the suitable objects in our setting, and furthermore, we will define constraints on our deformation problems in terms of these objects.
\begin{Deff}
\label{Def:inertial_type}
An {\it inertial type} is a group homomorphism $\tau : \Inertia_{\Q_{p^2}} \to \mathrm{GL}_2(\mathcal{O})$ that has open kernel and extends to a representation of the full Weil group $W_{\QQ_{p^{2}}}$. 
\end{Deff}
Many useful examples of inertial types come from restrictions to $\Inertia_{\Q_{p^{2}}}$ of representations of the full Galois group $\Gamma_{\QQ_{p^{2}}}$. In particular, we consider the following family of inertial types, which we {will} see are essentially the only parameters necessary for studying non-supercuspidal parameters.
\begin{Deff}
Given any pair $(a,b)$ of integers, we write $\tau_{a,b} : \Inertia_{\Q_{p^{2}}} \to \GL_{2}(\OO)$ for the inertial type given by $\widetilde{\omega}_{2}^{a} \oplus \widetilde{\omega}_{2}^{b}$. When $a \not\equiv b \mod p^{2}-1$, we say that $\tau_{a,b}$ is a {\it principal series inertial type}.
\end{Deff}
Note that these inertial types are (by construction) tamely ramified, which means that they are trivial on the wild inertia subgroup. Moreover, if $\tau : \Inertia_{\QQ_{p^{2}}} \to \GL_{2}(\OO)$ is an inertial type whose kernel contains~$\Inertia_{L}$, hence that factors through $\Inertia_{\QQ_{p^2}} / \Inertia_{L} \simeq \Gal(L/\Q_{p^2})$, then there exists a pair of integers $(a,b)$ such that $\tau \simeq \tau_{a,b}$. {(Recall that $L = \bQ_{p^2}(\pi)$, as defined at the beginning of the section.)} 
\begin{rem}
\label{RemTrickySP}
One should be careful with the terminology of ``principal series", as it is not as transparent as it may seem at first. In the classical setting of complex representations, any object on the Galois side labelled as ``principal series" should indeed be related to non-supercuspidal representations (see for instance \cite[Section A.1.2]{BM02}). 

In the current setting, one might expect that principal series inertial types would only be involved in non-trivial deformations of non-supercuspidal parameters, but such an expectation already fails for $\GL_{2}$. Indeed, \cite[Proposition 6.1.2(iii)]{BM02} gives instances of irreducible $p$-modular representations of $\Gamma_{\QQ_{p}}$ that correspond to supercuspidal representations via {the} $p$-modular local Langlands correspondance for $\GL_{2}(\QQ_{p})$ {and that admit} non-zero deformations indexed by a (generic) principal series inertial type. 
\end{rem}

We can define a genericity criterion for these inertial types. As $\widetilde{\omega}_{2}$ is of order $p^{2}-1$, we can assume that $-a$ and $-b$ are non-negative integers both less than $p^{2} - 1$. Their respective $p$-basis expansions are hence of the following form: $-a = a_{0} + pa_{1}$ and $-b = b_{0} + pb_{1}$ with $0 \leq a_{0}, a_{1}, b_{0}, b_{1} \leq p-1$. For further use, it {is} convenient to set ${\bf a} := (a_{0}, a_{1})$ and ${\bf b} := (b_{0}, b_{1})$.
\begin{Deff} 
\label{DefGenericityInertialTypes}
Let $\tau_{a,b}$ be a principal series inertial type, with $a$ and $b$ chosen as above, and let $n$ be a positive integer. We say that $\tau_{a,b}$ is {\it $n$-generic} if : 
\begin{equation}
\label{GenericityConditionPSType}
\forall \ i \in \{0, 1\}, \ n < \vert a_{i} - b_{i} \vert < p - n \ . 
\end{equation}
When condition \eqref{GenericityConditionPSType} is satisfied, we will also say that the pairs $(-a,-b)$ and $(\bf{a}, \bf{b})$ are {\it $n$-generic}.
\end{Deff}

\subsubsection{Genericity for $^C\hat{G}$-valued inertial types}
The next step towards $C$-parameters consists in defining $^C\hat{G}$-valued inertial types, and what genericity means for them, in a {way that is compatible way with the definitions of Section \ref{sssec:generic_types}.}
For any topological $\ZZ_{p}$-algebra $R$, \cite[Definition 4.1]{KM} defines $R$-valued inertial types using the $L$-group. Since $\GL_{2}(R)$ acts by conjugation on such parameters, we can consider them up to equivalence and use the bijection of \cite[Lemma 9.4.5]{GHS} to come back to ${}^C\GG(R)$-valued homomorphisms. We hence get the following definition of ${}^C\GG(R)$-valued inertial types.
\begin{Deff}
Let $R$ be a topological $\Z_{p}$-algebra. A {\it ${}^C\hat{\GG}(R)$-valued inertial type} is a continuous group homomorphism $\Inertia_{\QQ_{p}} \to {}^C\hat{\GG}(R)$ that extends to an $R$-valued $C$-parameter $\Gamma_{\QQ_{p}} \to {}^C\GG(R)$. {Two ${}^C\hat{\GG}(R)$-valued inertial types $\tau_1$ and $\tau_2$ are said to be \textit{equivalent}, written $\tau_1 \simeq \tau_2$, if they are conjugate by an element of ${}^C\hat{\GG}(R)$.}
\end{Deff}
Following \cite[Definition 4.1.4]{KM} for $f = 1$, we now introduce a {family} of inertial types that is the analogue, in this setting, of {the previously defined family of} principal series inertial types. In particular, {it} will be used to define a convenient notion of genericity for $C$-parameters, and we will see later (in Lemma \ref{LemTrivialWeylEltPSType}) that all representations of $\Inertia_{\Q_{p}}$ coming (via {the} Langlands correspondence {of \cite{Koziol}}) from generic non-supercuspidal representations of $G$ can actually be described as such ${}^C\hat{G}$-valued inertial types. 
\begin{Deff}
\label{DefTypesInertielsCvalued}
Given a pair of integers $(a,b)$ and $w \in W$, where $W$ is the Weyl group of $G$, we define $\tau_{w}(a,b) : \Inertia_{\Q_{p}} \to {}^C\hat{G}$ as follows. If $w = 1$, then we set:  
\begin{equation}
\label{EquaTypeIntertielCvaluedWeylTrivial}
\forall \ h \in \Inertia_{\QQ_{p}}, \ \tau_{1}(a,b)(h) = \left(\begin{pmatrix}
\omega_{2}(h)^{a + 1 + p(1 - b)} & 0\\
0 & \omega_{2}(h)^{b - pa}
\end{pmatrix}, \omega_1(h)\right) \ . 
\end{equation}
If $w = \weyl$, then we set:
\begin{equation}
\label{EquaTypeIntertielCvaluedWeylNonTrivial}
\forall \ h \in \Inertia_{\QQ_{p}}, \ \tau_{{\weyl}}(a,b)(h) = \left(\begin{pmatrix}
\omega_{2}(h)^{a + 1 - pa} & 0\\
0 & \omega_{2}(h)^{b + p(1 - b)}
\end{pmatrix}, \omega_1(h)\right) \ .
\end{equation}
\end{Deff}

We can now define genericity for ${}^C\hat{G}$-valued inertial types as follows.
\begin{Deff}
\label{DefGenericCvaluedType}
Let $n$ be a positive integer. 
A ${}^C\hat{G}$-valued inertial type $\tau : \Inertia_{\QQ_{p}} \to {}^C\hat{G}$ is called {\it $n$-generic} {if} there exists a triple $(w,a,b) \in W \times \Z^{2}$ such that $\tau \simeq \tau_{w}(a,b)$ and $n < a - b + 1 < p - n$.
\end{Deff}

\begin{rem}
The pair of integers $(a,b)$ introduced here {appear because they give} a way to parameterise the characters of a split maximal torus of $\GL_2$. The notion of $n$-genericity in \cite{KM} is given in terms of the $n$-depth of a such a character, as defined in \cite[Definition 3.2]{KM}, and a direct calculation shows that
this notion is equivalent to Definition \ref{DefGenericCvaluedType}. For more details on these constructions, the reader is invited to read \cite[Sections 2.2.3 and 4.1.4]{KM}.
\end{rem}

\subsubsection{Genericity for $C$-parameters and their inertial types}
{We now use Definition \ref{DefGenericCvaluedType} to define what it means for a $p$-modular $C$-parameter to be $n$-generic.} First we must set up some additional notation. Let $\rho : \Gamma_{\QQ_{p}} \to {}^CG$ be a $C$-parameter. Then by the same logic as in Section \ref{section-Cparams-def}, the restriction $\rho \vert_{\Inertia_{\bQ_p}}$ is of the form $\rho(h) = (\rho_0(h), h)$, where $\rho_0$ is a $^C\hat{G}$-valued inertial type. By abuse of notation we write $\rho_0$ as $\rho\vert_{\Inertia_{\bQ_p}}$. 
\begin{Deff}
\label{DefCparamGener}
Let $n$ be a nonnegative integer. A $C$-parameter $\rho : \Gamma_{\QQ_{p}} \to {}^CG$ is {\it $n$-generic} if there exists an element $w \in W$ and a pair of integers $(a,b)$ such that $n < a-b+1 < p-n$ and $\rho\vert_{\Inertia_{\bQ_p}} \simeq \tau_w(a, b)$.
\end{Deff}

Note that $n$-genericity is determined solely by the restriction of parameters to the inertia subgroup $\Inertia_{\QQ_{p}}$. The next lemma tells us that $n$-generic non-supercuspidal Langlands parameters (as described in Section \ref{section-Cparams-def}) must correspond to trivial Weyl parameter.

\begin{Lem}
\label{LemTrivialWeylEltPSType}
Let $(r,\lam) \in \Z \times \FFF_{p}^{\times}$ with $0 \leq r \leq p^{2}-2$ and $n \geq 0$. If the non-supercuspidal $C$-parameter $\psi_{r, \lambda} : \Gamma_{\Q_{p}} \to {}^CG$ is $n$-generic, then there exists a pair $(a,b) \in \Z^{2}$ such that $n < a-b+1 < p-n$ and
\[\displaystyle \psi_{r,\lambda} \vert_{\Inertia_{\Q_{p}}} \simeq \tau_{1}(a,b) \ . \]
\end{Lem}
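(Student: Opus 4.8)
The plan is to compare the explicit formula for $\psi_{r,\lambda}\vert_{\Inertia_{\Q_{p}}}$ (obtained from Proposition~\ref{ClassifNonSupercuspLParameters} via the $C$-parameter $\widetilde\psi_{r,\lambda}$ of Section~\ref{section-Cparams-def}) with the two families $\tau_{1}(a,b)$ and $\tau_{\weyl}(a,b)$ of Definition~\ref{DefTypesInertielsCvalued}, and to show that $n$-genericity is incompatible with the diagonal entries being in the ``swapped'' order forced by $\tau_{\weyl}$. First I would write out $\psi_{r,\lambda}\vert_{\Inertia_{\Q_{p}}}$: restricting the formula for $\widetilde\psi_{r,\lambda}(h)$ to $h \in \Inertia_{\Q_{p^2}}$, and using that $\mu_{2,\lambda}$ is trivial on inertia together with the relation $\omega_{2}^{p+1} = \omega_{1}$ on $\Inertia_{\Q_{p}}$, the inertial type becomes (up to the $\omega_1$-twist and the second $\GL_1$-coordinate, which match $\tau_w(a,b)$ automatically) the diagonal character $\mathrm{diag}(\omega_2^{r+(p+1)}, \omega_2^{-pr})$ on $\Inertia_{\Q_{p^2}}$, and one checks it extends over all of $\Inertia_{\Q_{p}}$. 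So the task reduces to: for which $(w,a,b)$ is this diagonal pair $\simeq \tau_w(a,b)$?

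Next I would solve the matching equations modulo $p^2-1$. Comparing with \eqref{EquaTypeIntertielCvaluedWeylTrivial}, $\tau_1(a,b) \simeq \psi_{r,\lambda}\vert_{\Inertia_{\Q_p}}$ forces (possibly after conjugating by $\weyl$, i.e.\ swapping the two diagonal entries, which is an $\leftidx{^C}{\hat G}{}$-conjugation)
\[
\{\, a+1+p(1-b),\ b-pa \,\} \equiv \{\, r+(p+1),\ -pr \,\} \pmod{p^2-1},
\]
and similarly comparing with \eqref{EquaTypeIntertielCvaluedWeylNonTrivial} for $\tau_{\weyl}(a,b)$ one gets
\[
\{\, a+1-pa,\ b+p(1-b) \,\} \equiv \{\, r+(p+1),\ -pr \,\} \pmod{p^2-1}.
\]
The point is that in $\tau_1(a,b)$ the exponents $a+1+p(1-b)$ and $b-pa$, reduced to digit pairs, are ``$p$-conjugate'' in a way that forces $a-b+1$ to control the digit gaps governing genericity; whereas in $\tau_{\weyl}(a,b)$ the exponents $a+1-pa = (a+1) - pa$ and $b + p(1-b) = b - p(b-1)$ have a different digit structure (the roles of $a$ and $b$ in the two $p$-adic places are \emph{not} swapped in the same pattern), and imposing that this equals the \emph{same} diagonal type $\mathrm{diag}(\omega_2^{r+(p+1)},\omega_2^{-pr})$ that already matches a $\tau_1(a',b')$ forces the genericity parameter to land outside the interval $n < a-b+1 < p-n$ — concretely, it forces one of the two digit differences $|a_i - b_i|$ to be $0$ (or $\equiv 0$), violating the $n$-generic condition for any $n \geq 0$. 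I would carry this out by reducing both sides to their $p$-adic digit pairs $(\cdot_0, \cdot_1) \in \{0,\dots,p-1\}^2$ as in Definition~\ref{DefGenericityInertialTypes} and checking the arithmetic case by case on which of the two entries matches which.

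The main obstacle I anticipate is bookkeeping the identifications modulo $p^2-1$ cleanly: the exponents involve $p$, $-p$, and $p+1$ simultaneously, so one must be careful that ``$-pr$'' and ``$r$'' interact correctly under the digit expansion (e.g.\ $-pr \bmod (p^2-1)$ swaps the two $p$-adic digits of $-r$), and that the normalisation $0 \le r \le p^2-2$ versus the reduced form $0 \le r \le p-1$ used elsewhere does not introduce spurious cases. It will also be necessary to keep track of the fact that conjugation by a representative of $\weyl$ in $\leftidx{^C}{\hat G}{}$ genuinely swaps the diagonal entries and is an allowed equivalence, so the ``two families $\tau_1, \tau_{\weyl}$'' are really being compared up to that one extra symmetry; once that is set up, the genericity inequality $n < a - b + 1 < p - n$ does the separating, since for the $\tau_{\weyl}$ solution the corresponding value of $a-b+1$ collapses to something congruent to $0$ or $\pm 1 \pmod{p}$ in a way excluded by the strict bounds. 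Having ruled out $w = \weyl$, the surviving case $w = 1$ gives exactly the asserted $\psi_{r,\lambda}\vert_{\Inertia_{\Q_p}} \simeq \tau_1(a,b)$ with $n < a-b+1 < p-n$, which is the claim.
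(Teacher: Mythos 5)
Your overall strategy is the same as the paper's: write out $\psi_{r,\lambda}\vert_{\Inertia_{\Q_p}}$ as $\left(\mathrm{diag}(\omega_2^{p+1+r},\omega_2^{-pr}),\omega_1\right)$, set up the matching congruences modulo $p^2-1$ against $\tau_1(a,b)$ and $\tau_{\weyl}(a,b)$ (allowing the swap of the two diagonal entries, so two cases for $w=\weyl$), and derive a contradiction with the genericity inequality in the $\weyl$ case. However, the decisive arithmetic step is misidentified, and as stated your contradiction would not materialise. First, you appeal to the digit-based genericity of Definition~\ref{DefGenericityInertialTypes} ($n<|a_i-b_i|<p-n$), but that notion applies to classical $\GL_2(\OO)$-valued types $\tau_{a,b}$; the genericity relevant to this lemma (Definitions~\ref{DefGenericCvaluedType} and~\ref{DefCparamGener}) is the single inequality $n<a-b+1<p-n$ on the integers $a,b$ themselves, so ``forcing a digit difference to vanish'' is not the condition you need to violate. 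Second, your fallback claim that $a-b+1$ ``collapses to something congruent to $0$ or $\pm1 \pmod p$'' is not a contradiction: $a-b+1\equiv\pm1\pmod p$ is perfectly compatible with $n<a-b+1<p-n$ (e.g.\ $a-b+1=1$ when $n=0$).

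What the congruences actually force, in both sub-cases of $w=\weyl$, is $(1-p)(a-b+1)\equiv 0 \pmod{p^2-1}$, i.e.\ $(p+1)\mid(a-b+1)$. That is the invariant that does the work: since $n\geq 0$ and $n<a-b+1$, the integer $a-b+1$ is a \emph{positive} multiple of $p+1$, hence $a-b+1\geq p+1>p\geq p-n$, contradicting $a-b+1<p-n$. (Concretely, in the paper's Case 1 one substitutes $r\equiv a(1-p)-p$ into $-pr\equiv b+p(1-b)$ and uses $p^2\equiv 1$ to get $a-b+1\equiv p(a-b+1)\pmod{p^2-1}$; Case 2 is analogous.) You would need to replace your digit-by-digit case analysis with this computation; without it, the proof does not close.
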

\begin{proof}
By construction, we know that for any $h \in \Inertia_{\Q_{p}}$, we have
 \begin{equation}
 \label{FormRestrictionNonsupercuspInertial}
 \displaystyle \psi_{r, \lam}\vert_{\Inertia_{\bQ_p}}(h) = \left(\begin{pmatrix}
\omega_2(h)^{p + 1 + r} & 0\\
0 & \omega_2(h)^{-pr}
\end{pmatrix},
\omega_1(h)\right) \ . 
\end{equation}
Assume that $\psi_{r,\lambda}$ is $n$-generic, which means that there exists a pair $(a,b)$ of integers and an element $w \in W$ such that $\psi_{r,\lambda}\vert_{\Inertia_{\Q_{p}}} \simeq \tau_{w}(a,b)$ with $n < a - b + 1 < p-n$. As $W = {\langle \weyl \rangle} \simeq \Z/2\Z$, we only have to prove that $w {\neq \weyl}$. We do this by contradiction, and assume that {$w = \weyl$}. Comparing \eqref{EquaTypeIntertielCvaluedWeylNonTrivial} to \eqref{FormRestrictionNonsupercuspInertial} shows that we must then satisfy one of the two following cases.

{\bf Case 1:} We have $\omega_{2}^{p + 1 + r} = \omega_{2}^{a + 1 - pa}$ and $\omega_{2}^{-pr} = \omega_{2}^{b + p(1-b)}$, which implies that we must have
\begin{eqnarray*} 
\nonumber p + r  &\equiv& a(1 - p) \text{ mod } p^{2} - 1 \text{ and }\\ 
\nonumber -pr &\equiv& b(1-p) + p \text{ mod } p^{2} - 1.
\end{eqnarray*}
The first congruence gives that $r \equiv a(1-p) -p \text{ mod } p^{2} - 1$. {Plugging into} the second congruence then shows that we must have $p^{2}(a + 1) - pa \equiv b + p(1 - b) \text{ mod } p^{2} - 1$, which can be rewritten as 
$a - pa + 1 \equiv b + p - pb \text{ mod } p^{2} - 1$,  i.e. as  
\begin{equation*}
a - b + 1 \equiv p(a - b + 1) \text{ mod } p^{2} - 1 \ .
\end{equation*}
The latter congruences implies that $p^{2} - 1$ divides $(p-1)(a-b+1)$, hence that $p+1$ divides $a-b+1$. Write $a-b+1 = m(p+1)$ with {$m \in \bZ$. The $n$-genericity} condition then implies that $n < m(p+1) < p-n$. Having $0 \leq n < m(p+1)$ shows that $m$ is positive, hence $m(p+1)$ is at least $p+1 > p \geq p-n$, which contradicts the fact that $m(p+1) < p-n$. Case 1 hence cannot occur.

{\bf Case 2:} We have $\omega_{2}^{-pr} = \omega_{2}^{a + 1 - pa}$ and $\omega_{2}^{p + 1 + r} = \omega_{2}^{b + p(1-b)}$. Then a {calculation similar to those} in Case 1 show that {$a + pb +1$ is} divisible by $p+1$, hence $a - b + 1 = a + pb + 1 - (p+1)b$ must also be divisible by $p+1$, but we proved above that this cannot occur. 

As neither of these cases can occur, $w$ cannot be equal to {$\weyl$} and the lemma is proven.
\end{proof}

\begin{rem} Note that the non-trivial Weyl element $s_{0}$ does not appear as parameter when considering $n$-generic non-supercuspidal $C$-parameters. Due to \cite[Corollary 6.16]{Koziol}, we suspect that types indexed by $s_{0}$ may be related to $n$-generic supercuspidal parameters, but this is work in progress.
\end{rem}
Now assume that $B$ is a finite local $E$-algebra and $\rho : \Gamma_{\QQ_{p}} \to {}^C\GG(B)$ is an $B$-valued $C$-parameter. Following \cite[Section 3.2]{BellovinGee}, we can define its inertial type using the associated {Weil--Deligne} representation $\WD(\rho)$.
\begin{Deff}
\label{DefInertialTypeCparam}
Let $\tau : \Inertia_{\QQ_{p^{2}}} \to \rm{GL}_{2}(\OO)$ be an inertial type and $\rho: \Gamma_{\QQ_{p}} \to {}^C\GG(B)$ be a $C$-parameter. We say that {\it $\rho$ has inertial type $\tau$} if $\WD(\rho)|_{\Inertia_{\QQ_{p^{2}}}} \simeq \tau \oplus {\bf 1}_{\Inertia_{\QQ_{p^{2}}}}$, where ${\bf 1}_{\Inertia_{\QQ_{p^2}}}$ denotes the trivial character of $\Inertia_{\QQ_{p^{2}}}$.
\end{Deff}

\
Under some mild conditions, we can find the inertial type of $\rho$ by looking at the so-called base change of $\rho$, which is a linear representation of $\Gamma_{\QQ_{p^{2}}}$ defined as follows.
\begin{Deff} 
\label{DefBCrho}
Let $R$ be a topological $\ZZ_{p}$-algebra $R$, and let $\rho : \Gamma_{\QQ_{p}} \to \ ^C\GG(R)$ be a $C$-parameter. Write the restriction of $\rho$ to $\Gamma_{\QQ_{p^{2}}}$ as
\[ \rho \vert_{\Gamma_{\QQ_{p^{2}}}}= \rho_{2} \oplus \rho_{1}  \text{ with }\rho_{2}: \Gamma_{\QQ_{p^{2}}} \to \GL_{2}(R) \text{ and } \rho_{1}:\Gamma_{\QQ_{p^{2}}} \to R^{\times} \ . \]
We then say that $\rho_{2}$ is the {\it base change of $\rho$}, and we write $\BC(\rho) := \rho_{2}$.
We also define the {\it multiplier of $\rho$} as the composite character 
\[
\Gamma_{\QQ_{p}} \stackrel{\rho}{\longrightarrow} {}^C\GG(R) \stackrel{\widehat{\iota}}{\longrightarrow} R^\times \ .
\]
where $\hat{\iota}$ is as defined in \cite[Section 2.3.3]{KM}.

Note that $\rho_{1}$ is basically the restriction to $\Gamma_{\QQ_{p^{2}}}$ of the multiplier of $\rho$.
In particular, we say that $\rho$ has {\it cyclotomic multiplier} if $\rho_{1}$ is the (reduction mod $p$ of the) cyclotomic character $\Gamma_{\QQ_{p}} \to \ZZ_{p}^{\times}$.
\end{Deff}

The notions of base change and of inertial type interact in the following proposition, which follows from \cite[Section 5.3.3]{KM}.

\begin{Prop}
\label{PropIntertialTypeChar0}
Let $\rho : \Gamma_{\QQ_{p}} \to {}^C\GG(B)$ be a $C$-parameter with cyclotomic multiplier and let $\tau : \Inertia_{\QQ_{p^{2}}} \to \GL_{2}(\OO)$ be a principal series inertial type.
Then $\rho$ has inertial type $\tau$ if, and only if, we have $\WD(\BC(\rho))\vert_{\Inertia_{\QQ_{p^{2}}}} \simeq \tau$.
\end{Prop}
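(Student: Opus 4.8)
### Proof proposal

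The plan is to unwind the definition of inertial type for $\rho$ (Definition~\ref{DefInertialTypeCparam}) and reduce it to a statement purely about the base change $\BC(\rho)$. Recall that $\rho$ has inertial type $\tau$ precisely when $\WD(\rho)|_{\Inertia_{\QQ_{p^2}}} \simeq \tau \oplus \1_{\Inertia_{\QQ_{p^2}}}$. The key observation is that the Weil--Deligne representation $\WD(\rho)$ attached to a $^C\GG$-valued parameter $\rho$ decomposes, via the direct-sum decomposition of $\rho|_{\Gamma_{\QQ_{p^2}}}$ as $\rho_2 \oplus \rho_1$ (Definition~\ref{DefBCrho}), into the contribution coming from the base change $\BC(\rho) = \rho_2$ and the contribution coming from the multiplier component $\rho_1$. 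Concretely, the composition of $\rho$ with the standard representation of $^C\hat\GG = \GL_2 \times \Gm$ (followed by restriction to $\Gamma_{\QQ_{p^2}}$, where the semidirect-product twist disappears by \eqref{ActionGaloisLgroupInertia}) gives $\WD(\BC(\rho)) \oplus (\text{multiplier part})$. This is exactly the bookkeeping carried out in \cite[Section 5.3.3]{KM}, and the cyclotomic-multiplier hypothesis is what pins down the second summand.

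First I would make the decomposition of $\WD(\rho)|_{\Inertia_{\QQ_{p^2}}}$ explicit: write it as $\WD(\BC(\rho))|_{\Inertia_{\QQ_{p^2}}} \oplus \WD(\rho_1)|_{\Inertia_{\QQ_{p^2}}}$, a rank-$2$ summand plus a rank-$1$ summand. Next I would use the cyclotomic-multiplier hypothesis: by Definition~\ref{DefBCrho}, $\rho_1$ is (the reduction of) the restriction to $\Gamma_{\QQ_{p^2}}$ of the cyclotomic character of $\Gamma_{\QQ_p}$. The cyclotomic character is unramified-twist-trivial on inertia in the sense relevant here --- more precisely, on $\Inertia_{\QQ_{p^2}}$ the associated Weil--Deligne representation restricts to the trivial character, since the Weil--Deligne normalization strips off the cyclotomic twist on inertia. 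Hence $\WD(\rho_1)|_{\Inertia_{\QQ_{p^2}}} \simeq \1_{\Inertia_{\QQ_{p^2}}}$. Plugging this in, the condition $\WD(\rho)|_{\Inertia_{\QQ_{p^2}}} \simeq \tau \oplus \1_{\Inertia_{\QQ_{p^2}}}$ becomes $\WD(\BC(\rho))|_{\Inertia_{\QQ_{p^2}}} \oplus \1_{\Inertia_{\QQ_{p^2}}} \simeq \tau \oplus \1_{\Inertia_{\QQ_{p^2}}}$.

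The final step is a cancellation argument. Since $\tau = \tau_{a,b}$ is a \emph{principal series} inertial type, we have $a \not\equiv b \bmod p^2 - 1$, so neither constituent $\widetilde\omega_2^a$ nor $\widetilde\omega_2^b$ of $\tau$ is the trivial character of $\Inertia_{\QQ_{p^2}}$ --- equivalently, $\Hom_{\Inertia_{\QQ_{p^2}}}(\1, \tau) = 0$. By the Krull--Schmidt property for the (semisimple, tamely ramified) representations involved, one can cancel the common trivial summand: $\WD(\BC(\rho))|_{\Inertia_{\QQ_{p^2}}} \oplus \1 \simeq \tau \oplus \1$ forces $\WD(\BC(\rho))|_{\Inertia_{\QQ_{p^2}}} \simeq \tau$. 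Conversely, if $\WD(\BC(\rho))|_{\Inertia_{\QQ_{p^2}}} \simeq \tau$, adding back the trivial summand coming from $\rho_1$ gives exactly the defining condition for $\rho$ to have inertial type $\tau$. This establishes the equivalence.

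I expect the main obstacle to be the careful verification of the two normalization points: (i) that $\WD$ of the $^C\GG$-valued parameter genuinely decomposes over $\Inertia_{\QQ_{p^2}}$ into the $\WD$ of its base change plus the $\WD$ of its multiplier component (i.e.\ that passing through the standard representation of $\GL_2 \times \Gm$ and restricting to $\Gamma_{\QQ_{p^2}}$ is compatible with the $\BC$ construction of Definition~\ref{DefBCrho}), and (ii) that the cyclotomic character contributes the \emph{trivial} character on $\Inertia_{\QQ_{p^2}}$ after applying $\WD$ --- here one must be attentive to which normalization of the local Langlands / Weil--Deligne dictionary is in force, since an ``off by a cyclotomic twist'' error would break the clean cancellation. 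Both are essentially bookkeeping already done in \cite[Section 5.3.3]{KM}, so the proof will mostly consist of assembling those facts and invoking Krull--Schmidt together with the principal-series hypothesis on $\tau$ to perform the cancellation.
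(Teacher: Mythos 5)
Your argument is correct and is essentially the argument the paper has in mind: the paper's "proof" consists solely of the citation to \cite[Section 5.3.3]{KM}, and your write-up correctly supplies the details — the decomposition $\WD(\rho)|_{\Inertia_{\QQ_{p^2}}} \simeq \WD(\BC(\rho))|_{\Inertia_{\QQ_{p^2}}} \oplus \WD(\rho_1)|_{\Inertia_{\QQ_{p^2}}}$, the fact that the (crystalline) cyclotomic multiplier has unramified associated Weil--Deligne representation and hence contributes the trivial character on inertia, and the Krull--Schmidt cancellation of the common trivial summand (which, note, already works without the principal-series hypothesis on $\tau$, since inertia acts through a finite quotient and the relevant representations are semisimple).
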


We thus have several notions of genericity appearing in the context of $C$-parameters, so we wonder to which extent they are compatible. In particular, we are interested in how compatible $n$-genericity is with reduction modulo $p$, or with deformation theory as defined in Section \ref{SubsectionDeformationGeneralites} below. In particular, we hence aim to solve the following question. 
\begin{What}
Given a $B$-valued $C$-parameter $\rho : \Gamma_{\QQ_{p}} \to {}^C\GG(B)$ with principal series inertial type $\tau$, let $\bar{\rho} : \Gamma_{\QQ_{p}} \to {}^CG$ be its reduction modulo $p$. Is $n$-genericity for $\rho$ (i.e. for $\tau$, in the sense of Definition~\ref{DefGenericityInertialTypes}) equivalent to $n$-genericity for $\bar{\rho}$ (in the sense of Definition \ref{DefCparamGener})? 
\end{What}

\subsection{Deforming Galois parameters}\label{SubsectionDeformationGeneralites}
As we did for $p$-modular automorphic representations in Section \ref{section-nonsc-deformations}, we now want to deform $p$-modular $C$-parameters. We will do so in the framework of deformation theory as first introduced by Mazur in this setting \cite{Mazur88}, then later developed by many authors. We start by recalling some basic facts and definitions. We then specialise to specific deformations that are not only more convenient to handle, since they can be quite well-understood via some (advanced and technical) semi-linear algebra that will be introduced in the next subsections, but also do carry some interesting geometric and arithmetic information related to Langlands correspondences. In particular, we will introduce the notion of a Hodge--Tate type, which relates to the classical notion of Hodge--Tate weights, and make more assumptions on the inertial types we will use. 

Though it may not be obvious at first sight, these notions and assumptions we make on the Galois side have a natural counterpart on the automorphic side, as can be seen in the model case of $\GL_{2}$. In this setting, a first automorphic interpretation of these data is given by the Breuil--M\'ezard conjecture \cite[Conjecture 1.1]{BM02}. This conjecture has the following motivation: starting from a continuous $p$-modular two-dimensional representation $\bar{\rho}$ of $\AbGal_{\QQ_{p}}$, one wants to understand the deformations of $\bar{\rho}$ with prescribed inertial type $\tau$ and Hodge--Tate weights $(0, k-1)$. The Breuil--M\'ezard conjecture predicts that the ring parametrising these deformations can be (at least partially) understood through the study of the semisimplification of the representation $\sigma(\tau) \otimes \mathrm{Sym}^{k-2}((\overline{\QQ}_{p})^{2})$ of $\GL_{2}(\ZZ_{p})$, where $\sigma(\tau)$ is uniquely determined by $\tau$ via the local Langlands correspondence for $\GL_{2}(\QQ_{p})$ \cite[Section A.1.5]{BM02}. 

Another automorphic interpretation, related to the previous one, explicitly appears in the origin of the $p$-adic Langlands program. Indeed, one of the core statement of this program is \cite[Th\'eor\`eme~5.1]{BreuilBourbaki12}, where one sees that the connection between the smooth and algebraic representations attached to a given (potentially semi-stable) Galois representation $\rho_{p}$ can only be made via an automorphic interpretation of its Hodge--Tate weights, again with the representation $\mathrm{Sym}^{k-2}((\overline{\QQ}_{p})^{2})$ that showed up earlier (and is usually called a {\it Serre weight} in this context).

\subsubsection{Universal framed deformations} 
\label{subsubsec:UnivDefTheory}
Recall that $\mathrm{Noe}(\OO)$ is the category of complete Noetherian local $\Oe$-algebras with residue field $k$. For any $A$ in $\mathrm{Noe}(\OO)$, we write $\phi_{A}: A \to k$ for the reduction map.

Let $\Profin$ be a profinite group, and let $\bar{\rho} : \Gamma \to {}^C\GG(k)$ be a continuous group homomorphism. 
\begin{Deff}
The {\it functor of framed deformations of $\bar{\rho}$} is the functor $D_{\bar{\rho}}^{\square}$ 
that associates to each $A$ in $\mathrm{Noe}(\OO)$ the set of continuous representations $\rho_{A}: \Profin \to {}^C\GG(A)$ such that the composition
\[
\Profin \stackrel{\rho_A}{\longrightarrow}  {}^C\GG(A)  \stackrel{\phi_A}{\longrightarrow} {}^C\GG(k) \ 
\]
is equal to $\bar{\rho}$. We say that $D_{\bar{\rho}}^{\square}(A)$ is the set of {\textit{framed deformations}} of $\bar{\rho}$ to $A$.
\end{Deff}
Unless we add extra assumptions on $\Gamma$ and/or on $\bar{\rho}$, there is a priori no reason ensuring that the functor $D_{\bar{\rho}}^{\square}$ is representable. In our work, we only consider the case where $\Gamma$ is the absolute Galois group $\Gamma_{F}$ with $F$ being either $\QQ_{p}$ or $\QQ_{p^{2}}$. In this setting, we have the following nice result, which directly follows from \cite[Theorem 1.2.2]{Balaji}.

\begin{Thm}
Let $F$ be a finite extension of $\QQ_{p}$ and $\bar{\rho}: \Gamma_{F} \to {}^C\GG(k)$ be a continuous representation. Then the framed deformation functor $D_{\bar{\rho}}^{\square}$ is representable by an object $R_{\bar{\rho}}^{\square}$ of $\mathrm{Noe}(\OO)$.
\end{Thm}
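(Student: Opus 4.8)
The plan is to deduce this from the general representability theorem \cite[Theorem 1.2.2]{Balaji}, so the work is to check its two hypotheses in our situation: that ${}^C\GG$ is a group scheme of the permitted type, and that the profinite group $\Gamma_F$ satisfies Mazur's $p$-finiteness condition $\Phi_p$ (every open subgroup has only finitely many open subgroups of index $p$). Once these are in place the conclusion is formal.

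First I would note that the neutral part ${}^C\hat{\GG}$ is a smooth affine group scheme over $\OO$: by \eqref{eqn-Cgrpiso} it is isomorphic to $\GL_2 \times \Gm$ over $\OO$ (equivalently, it is the quotient of the smooth affine group $\GL_2 \times \Gm$ by the finite central subgroup $\langle(-I_2,-1)\rangle$, which is \'etale since $p \neq 2$), and ${}^C\GG = {}^C\hat{\GG} \rtimes \Gamma_{\QQ_p}$ contains ${}^C\hat{\GG}$ as an open normal subgroup. Hence a framed deformation $\rho_A$ of $\bar{\rho}$ to $A \in \mathrm{Noe}(\OO)$ differs from $\bar{\rho}$ only through ${}^C\hat{\GG}(A)$ --- the projection ${}^C\GG(A) \to \Gamma_{\QQ_p}$ is independent of $A$ and is automatically respected --- so lifting $\bar{\rho}$ along ${}^C\GG(A) \to {}^C\GG(k)$ is the usual framed deformation problem, governed by continuous cochains of $\Gamma_F$ valued in the adjoint module $\mathrm{ad}\,\bar{\rho} := \mathrm{Lie}({}^C\hat{\GG}) \otimes_{\OO} k$ with $\Gamma_F$ acting through $\mathrm{Ad} \circ \bar{\rho}$. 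This is precisely the setting of \cite{Balaji}.

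Next I would verify the finiteness hypothesis. For a finite extension $F/\QQ_p$ the absolute Galois group $\Gamma_F$ is topologically finitely generated and has finite continuous cohomology with finite coefficients (local Tate cohomology); in particular $H^1_{\mathrm{cont}}(\Gamma_{F'}, \FF_p)$ is finite for every finite $F'/F$, so $\Gamma_F$ satisfies $\Phi_p$. Applying \cite[Theorem 1.2.2]{Balaji} with $\Gamma = \Gamma_F$ and target ${}^C\GG$ then yields a complete Noetherian local $\OO$-algebra $R_{\bar{\rho}}^{\square}$ with residue field $k$ --- hence an object of $\mathrm{Noe}(\OO)$ --- that prorepresents $D_{\bar{\rho}}^{\square}$, the Noetherianity coming from the topological finite generation of $\Gamma_F$, which bounds the dimension of the tangent space $Z^1_{\mathrm{cont}}(\Gamma_F, \mathrm{ad}\,\bar{\rho})$.

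The only point needing a little care --- and the reason to invoke \cite{Balaji} rather than Mazur's original $\GL_n$ construction --- is the disconnectedness of ${}^C\GG$: one should check that the rigidity of the fixed projection ${}^C\GG \to \Gamma_{\QQ_p}$ genuinely reduces the deformation problem to one for the smooth affine group ${}^C\hat{\GG}$, so that the adjoint module above is the correct one. As indicated in the second paragraph this is immediate, and such $L$-group/$C$-group-shaped targets are exactly what \cite{Balaji} is built to handle; so once the two hypotheses are checked, nothing further is required.
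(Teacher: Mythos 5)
Your proposal follows exactly the paper's route: the paper gives no argument beyond stating that the result "directly follows from" \cite[Theorem 1.2.2]{Balaji}, and your verification of that theorem's hypotheses (smoothness of ${}^C\hat{\GG}$ via the isomorphism \eqref{eqn-Cgrpiso}, the $p$-finiteness condition for $\Gamma_F$ from finiteness of local Galois cohomology, and the reduction of the lifting problem to the identity component) is correct and fills in precisely the details the paper leaves implicit. Nothing further is needed.
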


Langlands parameters that naturally appear in the Langlands program
(for instance, arising in the cohomology of Shimura varieties or coming from automorphic forms)
satisfy additional properties, typically being potentially semi-stable or crystalline.
We will then focus on deformations of a fixed $C$-parameter modulo $p$ with this kind of additional conditions.
They are described by some quotients of the universal ring $R_{\bar{\rho}}^{\square}$ that will be introduced in Section \ref{sssec:def_Galois_potcrystype}.
Note that the geometry of these quotients is the subject of the Breuil--M\'{e}zard conjecture mentioned above;
they also play an important role in global modularity results and are related with Serre weights conjectures.  
 
\subsubsection{Intermission: Frobenius-twist self-dual inertial types}
Recall that we have fixed a geometric Frobenius $\Frob$ (see Section \ref{section-intro}). Note that our definition implies that $\varphi^{-1}$ is an  arithmetic Frobenius. 
\begin{Deff}
\label{DefFTSDtypes}
Given a principal series inertial type $\tau$ of the form $\eta_1 \oplus \eta_2$, we call $\tau^{\vee} := \eta_1^{-1} \oplus \eta_2^{-1}$ the {\it dual type} of $\tau$ and $\sigma^{*}\tau := \eta_1^{p^{-1}} \oplus \eta_2^{p^{-1}}$ the {\it Frobenius-twist of $\tau$}. When $\sigma^{*}\tau \simeq \tau^{\vee}$, we say that $\tau$ is {\it Frobenius-twist self-dual}. 
\end{Deff}
Principal series inertial types that are Frobenius-twist self-dual can be described explicitly. Indeed, if $\tau = \widetilde{\omega}_{2}^{a} \oplus \widetilde{\omega}_{2}^{b}$, then $\tau^{\vee} = \widetilde{\omega}_{2}^{-a} \oplus \widetilde{\omega}_{2}^{-b}$ and $\sigma^{*}\tau = \widetilde{\omega}_{2}^{p^{-1}a} \oplus \widetilde{\omega}_{2}^{p^{-1}b}$. A direct calculation shows that such a type $\tau$ is Frobenius-twist self-dual if, and only if, there exists integers $a,b$ such that $\tau$ is either of the form $\widetilde{\omega}_{2}^{(1-p)a} \oplus \widetilde{\omega}_{2}^{(1-p)b}$ or of the form $\widetilde{\omega}_{2}^{a} \oplus \widetilde{\omega}_{2}^{-pa}$. Note that, according to \cite[Section 4.4]{KM}, this is also equivalent to asking that $\tau$ is given by the restriction to $\Inertia_{\QQ_{p^{2}}}$ of the base change of an $\OO$-valued $C$-parameter $\rho : \Gamma_{\QQ_{p}} \to {}^C\GG(\OO)$ with trivial multiplier.

\subsubsection{Potentially crystalline deformations with prescribed Hodge type and inertial type}
\label{sssec:def_Galois_potcrystype}
In this subsection, we let $F$ denote either $\QQ_{p}$ or $\QQ_{p^{2}}$ and we let $B$ be any finite local $E$-algebra. We first transfer to $C$-parameters the classical conditions imposed to genuine Galois representations arising in Langlands program, namely being potentially crystalline 
 and of Hodge type $(\underline{1},\underline{0},\underline{1})$. For a review of these notions in the classical setting, the reader is invited to read for instance \cite{ColmezECM}.
\begin{Deff}
\label{DefPotenCristParal}
A $C$-parameter $\Gamma_{F} \to {}^C\GG(B)$ is {\it potentially crystalline} 
 if, and only if, its composition with any faithful algebraic representation $^C\GG \xhookrightarrow{} \GL_{n}$ is potentially crystalline in the classical sense of \cite[5.1.4]{Font88}.
\end{Deff}

\begin{Deff}
Assume that $\rho: \Gamma_{F} \to {}^C\GG(B)$ is a $C$-parameter with cyclotomic multiplier. We say that $\rho$ has {\it $p$-adic Hodge type} $(\underline{1},\underline{0},\underline{1})$ when its base change $\BC(\rho)$ has $p$-adic Hodge type $(\underline{1}, \underline{0})$, which means that $\BC(\rho)$ has Hodge-Tate weights $\{-1,0\}$.
\end{Deff}

Given a principal series inertial type $\tau : \Inertia_{\QQ_{p^{2}}} \to \GL_{2}(\OO)$ that is Frobenius-twist self-dual (in the sense of Definition \ref{DefFTSDtypes}), we are interested in lifts of $\rhobar$ that are potentially crystalline with $p$-adic Hodge type $(\underline{1},\underline{0},\underline{1})$, inertial type $\tau$ and cyclotomic multiplier.
They are parameterised by a quotient of the universal framed deformation ring $R_{\bar{\rho}}^{\square}$.
The existence and properties of this quotient have been established in our case in \cite[Sections 3.2 and 3.3]{BellovinGee} (see also the proof of \cite[Proposition 3.0.12]{Balaji} and \cite[Section 5.3.2]{KM}). In the following theorem, $\rho^\mathrm{u}$ is the universal $C$-parameter from $\Gamma_{\Q_p}$ to ${}^C\GG(R_{\bar{\rho}}^{\square})$ lifting $\rhobar$.

\begin{Thm}
\label{Def:R_rob_tau} 
Let $\tau : \Inertia_{\QQ_{p^{2}}} \to \GL_{2}(\OO)$ be a principal series inertial type that is Frobenius-twist self-dual.
 Then there exists a unique quotient $R_{\bar{\rho}}^{\tau}$  of $R_{\bar{\rho}}^{\square}$ satisfying the following property:
  for any finite local $E$-algebra $B$, and any morphism $x$ from $R_{\bar{\rho}}^{\square}$ to $B$,  
  $x$ factors through $R_{\bar{\rho}}^{\tau}$ if and only if the $C$-parameter $x \circ \rho^{u}$ from $\Gamma_{\Q_p}$ to ${}^C\GG(B)$ is  potentially crystalline with $p$-adic Hodge type $(\underline{1},\underline{0},\underline{1})$, inertial type $\tau$ and cyclotomic multiplier.
\end{Thm}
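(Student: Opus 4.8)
The plan is to deduce the statement from the existence theorem for potentially crystalline framed deformation rings of two-dimensional $p$-adic Galois representations (Kisin's theorem, in the form recorded in \cite[Sections 3.2 and 3.3]{BellovinGee}; see also \cite[Proposition 3.0.12]{Balaji} and \cite[Section 5.3.2]{KM}), transported to $C$-parameters through the base change construction of Definition~\ref{DefBCrho} together with the fixing of the multiplier. I would start from the universal framed deformation ring $R_{\bar{\rho}}^{\square}$, whose existence is guaranteed by the preceding theorem, with universal $C$-parameter $\rho^{\mathrm{u}} : \Gamma_{\QQ_{p}} \to {}^{C}\GG(R_{\bar{\rho}}^{\square})$.

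The first reduction is to impose the cyclotomic multiplier. The multiplier of $\rho^{\mathrm{u}}$ is a continuous character $\Gamma_{\QQ_{p}} \to (R_{\bar{\rho}}^{\square})^{\times}$ lifting the mod-$p$ cyclotomic character; since it factors through the topologically finitely generated abelianisation $\Gamma_{\QQ_{p}}^{\mathrm{ab}}$, the requirement that it coincide with the cyclotomic character $\Gamma_{\QQ_{p}} \to \ZZ_{p}^{\times} \hookrightarrow (R_{\bar{\rho}}^{\square})^{\times}$ is a closed condition, defining a quotient $R_{\bar{\rho}}^{\square,\mathrm{cyc}}$ of $R_{\bar{\rho}}^{\square}$ with the evident universal property. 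The second, and main, reduction translates the remaining conditions to the base change. Using Definition~\ref{DefPotenCristParal} and the fact that potential crystallinity and the multiset of Hodge--Tate weights are insensitive to restriction along the open subgroup $\Gamma_{\QQ_{p^{2}}} \subset \Gamma_{\QQ_{p}}$ and are detected on direct summands, a routine check shows that, for a $C$-parameter with cyclotomic multiplier, being potentially crystalline with $p$-adic Hodge type $(\underline{1},\underline{0},\underline{1})$ is equivalent to $\BC(\rho)$ being potentially crystalline with Hodge--Tate weights $\{-1,0\}$, and (by Proposition~\ref{PropIntertialTypeChar0}, since $\tau$ is a principal series type) having inertial type $\tau$ is equivalent to $\WD(\BC(\rho))|_{\Inertia_{\QQ_{p^{2}}}} \simeq \tau$. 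Here the Frobenius-twist self-dual hypothesis on $\tau$ (Definition~\ref{DefFTSDtypes}) is precisely what guarantees, via \cite[Section 4.4]{KM}, that such a $\tau$ does arise as the inertial restriction of the base change of a $C$-parameter with cyclotomic (a twist of trivial) multiplier, so that the deformation problem is the expected one.

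With this dictionary in hand, the base change $\BC(\rho^{\mathrm{u}})$, viewed over $R_{\bar{\rho}}^{\square,\mathrm{cyc}}$, is a framed deformation of $\BC(\bar{\rho}) : \Gamma_{\QQ_{p^{2}}} \to \GL_{2}(k)$ and hence induces a morphism $R^{\square}_{\BC(\bar{\rho})} \to R_{\bar{\rho}}^{\square,\mathrm{cyc}}$ from the $\GL_{2}$-valued framed deformation ring. By the cited results there is a quotient $R^{\square,(\underline{1},\underline{0}),\tau}_{\BC(\bar{\rho})}$ of $R^{\square}_{\BC(\bar{\rho})}$ whose $B$-points, for finite local $E$-algebras $B$, are exactly the potentially crystalline framed deformations of $\BC(\bar{\rho})$ of Hodge type $(\underline{1},\underline{0})$ and inertial type $\tau$. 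I then define $R_{\bar{\rho}}^{\tau}$ to be the quotient of $R_{\bar{\rho}}^{\square,\mathrm{cyc}}$ corresponding to the closed subscheme $\operatorname{Spec} R^{\square,(\underline{1},\underline{0}),\tau}_{\BC(\bar{\rho})} \times_{\operatorname{Spec} R^{\square}_{\BC(\bar{\rho})}} \operatorname{Spec} R_{\bar{\rho}}^{\square,\mathrm{cyc}}$. The claimed universal property is then immediate by combining the definition of $R_{\bar{\rho}}^{\square,\mathrm{cyc}}$, the universal property of the Kisin quotient, and the dictionary above. Uniqueness follows as in the $\GL_{2}$ case: $R_{\bar{\rho}}^{\tau}$ is $\OO$-flat with reduced generic fibre and Zariski-dense set of $\overline{\QQ}_{p}$-points, so any quotient of $R_{\bar{\rho}}^{\square}$ with the stated property must coincide with the scheme-theoretic closure of that common set of closed points.

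The step I expect to be the main obstacle is the dictionary reduction, namely verifying that the functor-level conditions on a $C$-parameter are faithfully captured by conditions on the pair $\bigl(\BC(\rho),\,\text{multiplier}\bigr)$, and that the fibre-product ring retains a clean representing property. The delicate points are: (i) that a $\GL_{2}$-valued lift of $\BC(\bar{\rho})$ satisfying the compatibilities forced by the $C$-group action, together with the fixed multiplier, does reassemble into a $C$-parameter lifting $\bar{\rho}$, so that the morphism $R^{\square}_{\BC(\bar{\rho})} \to R_{\bar{\rho}}^{\square,\mathrm{cyc}}$ loses no information on the relevant loci; and (ii) that potential crystallinity is not lost when passing between $\rho$ and $\BC(\rho)$ at the level of Artinian quotients. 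Both are controlled by the explicit description of $\BC$ and of the relevant torus characters in \cite[Sections 4.4 and 5.3]{KM} and by the Frobenius-twist self-duality of $\tau$; the remaining input, the existence of the $\GL_{2}$-valued potentially crystalline deformation ring itself, is imported wholesale and is not re-proved here.
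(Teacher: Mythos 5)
The paper does not actually prove this theorem: being a survey, it imports the result wholesale from \cite[Sections 3.2 and 3.3]{BellovinGee} (see also \cite[Proposition 3.0.12]{Balaji} and \cite[Section 5.3.2]{KM}), so there is no in-paper argument to compare yours against. Your sketch is essentially a reconstruction of the strategy of those references: cut out the cyclotomic-multiplier locus (a closed condition, as you say), transport the potential crystallinity, Hodge type and inertial type conditions to $\BC(\rho)$ via the dictionary of Definitions \ref{DefPotenCristParal}--\ref{DefBCrho} and Proposition \ref{PropIntertialTypeChar0}, and pull back Kisin's potentially crystalline quotient of $R^{\square}_{\BC(\bar{\rho})}$ along the classifying map. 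The one variant worth noting is that Bellovin--Gee work with a faithful algebraic representation of the full group ${}^C\GG$ over $\QQ_p$, whereas you restrict to $\Gamma_{\QQ_{p^2}}$ and use the $\GL_2$-component; since potential crystallinity and Hodge--Tate weights are insensitive to restriction to an open subgroup and the $\FFF^{\times}_p$-factor is pinned down by the cyclotomic multiplier, this is harmless. Your remark that point (i) of your ``delicate points'' might be an obstacle is overly cautious: the fibre-product construction only needs the forward map $R^{\square}_{\BC(\bar{\rho})} \to R^{\square,\mathrm{cyc}}_{\bar{\rho}}$, not a reassembly in the other direction.

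The one genuine wrinkle is uniqueness. As literally stated (and as you state it), uniqueness fails: two distinct quotients of $R^{\square}_{\bar{\rho}}$ can have identical sets of morphisms to finite local $E$-algebras factoring through them (compare $\OO[[x]]/(x)$ with $\OO[[x]]/(px)$, since $p$ is invertible in any such $B$). The correct formulation, as in Kisin and \cite{BellovinGee}, requires $R^{\tau}_{\bar{\rho}}$ to be in addition reduced and $\OO$-flat; your closing sentence invokes exactly these properties, but your fibre product need not possess them (pulling back a reduced flat closed subscheme does not preserve either), so one must pass to the maximal reduced $p$-torsion-free quotient and then appeal to the non-trivial fact --- part of the content of Kisin's theorem --- that this passage loses no $B$-points even for non-reduced Artinian $B$. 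This is a defect you inherited from the paper's own imprecise statement, but your uniqueness paragraph conflates the two formulations rather than resolving the tension.
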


\subsection{From $C$-parameters to Kisin modules}
Using $p$-adic Hodge theory, the potentially crystalline lifts we described in the previous section can be described by objects of semi-linear algebra, which are easier to handle.
First, we will explain in Section \ref{SectionParametersTwdGalois} how we have, on the one hand, a natural bridge between $C$-parameters and genuine two-dimensional Galois representations enriched with extra structures. Doing so, we will give an explicit statement for the bijection announced in Remark \ref{RemConnectionCparChar0GalRep}. Then, we will explain how to relate (most of) these Galois representations to a category of modules (called Kisin modules) over an appropriate power series ring, which motivates why we get interested in deformations of such modules in the next subsection of this paper.

\subsubsection{From $C$-parameters to genuine $p$-adic Galois representations}
\label{SectionParametersTwdGalois} 
Let $R$ be a topological $\ZZ_{p}$-algebra. The notion of base change introduced in Definition \ref{DefBCrho} is a first step to connect $R$-valued $C$-parameters to two-dimensional $R$-linear Galois representations, but it is clearly not enough as non-equivalent $C$-parameters could have isomorphic base changes. Going further hence requires extra structures given by the notion of polarisation, which we define now.
\begin{Deff}
\label{Def:pol_Galois}
Let $\tilde{\rho} : \Gamma_{\QQ_{p^{2}}} \to \GL_{2}(R)$ be a continuous group homomorphism and $\theta : \Gamma_{\QQ_{p}} \to R^{\times}$ be a continuous character. A {\it polarisation of $\tilde{\rho}$ compatible with $\theta$} is an isomorphism $\alpha : \tilde{\rho}^{\varphi^{-1}} \stackrel{\sim}{\to} \tilde{\rho}^{\vee} \otimes \theta$ that is such that the composite map
\[
\displaystyle \tilde{\rho} \xrightarrow{\tilde{\rho}(\varphi^{-2})} \tilde{\rho}^{\varphi^{-2}} \xrightarrow{\alpha^{\varphi^{-1}}} \left(\tilde{\rho}^{\vee} \otimes \theta\right)^{\varphi^{-1}} \xrightarrow{\mathrm{can}} \left( \left(\tilde{\rho} \otimes \theta^{-1}\right)^{\varphi^{-1}} \right)^{\vee} \xrightarrow{\left( \left(\alpha \otimes \theta^{-1}\right)^{\vee} \right)^{-1}} \tilde{\rho}
 \]
is the multiplication by $-\theta(\varphi^{-1})$ map.
\end{Deff}
Considering triples $(\tilde{\rho}, \theta, \alpha)$ as in Definition \ref{Def:pol_Galois} is now enough to distinguish between non-equivalent $C$-parameters, as stated by the next theorem.
\begin{Thm}
\label{EquivCparamPolarisedTriples}  
Let $\rho : \Gamma_{\QQ_{p}} \to {}^C\GG(R)$ be a $C$-parameter with multiplier $\rho_{1}$. 
\begin{itemize}
\item[\tt $(i)$] Let $A$ be the $\GL_{2}(R)$-component of $\rho(\varphi^{-1})$ and
 let $\alpha$ be the $R$-endomorphism of $R^{2}$ defined by $\alpha(v) := \begin{psmallmatrix}
 0 & -1 \\
 1 & 0 \\
\end{psmallmatrix}
A^{-1}v $. Then $\alpha$ is a polarisation of $BC(\rho)$ compatible with $\rho_{1}$.
\item[\tt $(ii)$] The previous construction induces a bijection $\rho \mapsto (BC(\rho), \rho_{1}, \alpha)$ from $R$-valued $C$-parameters to triples $(\tilde{\rho}, \theta, \alpha)$ where $\tilde{\rho}$ denotes a continuous morphism from $\Gamma_{\QQ_{p^{2}}}$ to $\GL_2(R)$,
 $\theta$ a continuous $R$-character of $\Gamma_{\QQ_{p}}$ and $\alpha$ a polarisation of $\tilde{\rho}$ compatible with $\theta$.
\end{itemize}
\end{Thm}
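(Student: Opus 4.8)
The plan is to unpack the semidirect product structure of $^C\GG(R)$ and track what a $C$-parameter does on $\Frob$ and on $\Gamma_{\QQ_{p^2}}$, then verify the polarisation axioms directly. First I would observe that since $[\Gamma_{\QQ_p} : \Gamma_{\QQ_{p^2}}] = 2$ and $\Frob \notin \Gamma_{\QQ_{p^2}}$, a $C$-parameter $\rho$ is determined by the pair $(\rho|_{\Gamma_{\QQ_{p^2}}}, \rho(\Frob))$ subject to the compatibility $\rho(\Frob)\,\rho(h)\,\rho(\Frob)^{-1} = \rho(\Frob h \Frob^{-1})$ for $h \in \Gamma_{\QQ_{p^2}}$, together with the relation fixing $\rho(\Frob)^2$ in terms of $\rho(\Frob^2) \in \Gamma_{\QQ_{p^2}}$. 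Writing $\rho|_{\Gamma_{\QQ_{p^2}}} = \BC(\rho) \oplus \rho_1$ (legitimate because $\Gamma_{\QQ_{p^2}}$ acts trivially on $^C\hat{G}$ by \eqref{ActionGaloisLgroupInertia}, so the image lands in $^C\hat{\GG}(R) \subseteq \GL_2(R) \times R^\times$ and the first factor is $\BC(\rho) := \rho_2$), and recalling from \eqref{ActionGaloisLgroupFrob} that conjugation by $\Frob$ on the $\GL_2$-factor is $g \mapsto J (g^t)^{-1} J^{-1}$ with $J = \begin{psmallmatrix} 0 & 1 \\ -1 & 0 \end{psmallmatrix}$, the compatibility relation becomes, for $h \in \Gamma_{\QQ_{p^2}}$ and $A$ the $\GL_2$-component of $\rho(\Frob)$:
\begin{equation*}
A\, \BC(\rho)(h)\, A^{-1} = J\,\bigl(\BC(\rho)(\Frob h \Frob^{-1})^t\bigr)^{-1} J^{-1},
\end{equation*}
which, after rearranging and using that $\Frob h \Frob^{-1}$ ranges over $\Gamma_{\QQ_{p^2}}$ as $h$ does, is exactly the statement that $v \mapsto J A^{-1} v$ intertwines $\BC(\rho)^{\Frob^{-1}}$ with $(\BC(\rho)^\vee) \otimes \rho_1$ (the twist by $\rho_1$ entering because the $R^\times$-component of $\rho(\Frob)$ contributes to the relation via the cocycle condition on the enlarged torus). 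This proves part $(i)$ modulo carefully bookkeeping the $R^\times$-factor.

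For the quadratic condition in Definition \ref{Def:pol_Galois}, I would compute the displayed composite explicitly: each arrow is linear algebra over $R$ once $\alpha$ is the matrix $J A^{-1}$, the three $\Frob$-twists $\tilde\rho^{\Frob^{-1}}, \tilde\rho^{\Frob^{-2}}$ are pushforwards along $\Frob^{-1}, \Frob^{-2}$, and $\tilde\rho(\Frob^{-2})$ is the $\GL_2$-component of $\rho(\Frob^{-2}) = \rho(\Frob)^{-2}$ times a correction from the identification; composing everything and using $J^t = -J$, $J^2 = -I_2$ together with the relation $\rho(\Frob)^2 = (\rho_1(\Frob^2), \ldots)$ governing $\rho$ on $^C\GG$ should collapse the scalar to $-\theta(\Frob^{-1}) = -\rho_1(\Frob^{-1})$. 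This is the computation-heavy step, but it is mechanical: it amounts to a single $2\times 2$ matrix identity plus tracking one scalar.

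For part $(ii)$, I would construct the inverse map. Given a triple $(\tilde\rho, \theta, \alpha)$ with $\alpha$ a polarisation of $\tilde\rho$ compatible with $\theta$, set $A := \begin{psmallmatrix} 0 & -1 \\ 1 & 0 \end{psmallmatrix}\alpha^{-1}$ (reading $\alpha$ as a matrix) and define $\rho$ on $\Gamma_{\QQ_{p^2}}$ by $h \mapsto (\tilde\rho(h), \theta(h))$ and $\rho(\Frob) := (A, \theta(\Frob)) \cdot \Frob$, extending multiplicatively using the presentation of $\Gamma_{\QQ_p}$ over $\Gamma_{\QQ_{p^2}}$. One must check this is well-defined as a homomorphism into $^C\GG(R)$: the polarisation equation $\alpha : \tilde\rho^{\Frob^{-1}} \xrightarrow{\sim} \tilde\rho^\vee \otimes \theta$ gives precisely the conjugation-compatibility, and the normalisation condition (multiplication by $-\theta(\Frob^{-1})$) gives precisely that the relation defining $\rho(\Frob)^2$ is satisfied modulo the class of $(-I_2, -1)$ that we quotient by in $^C\hat{\GG}(R)$ — this is where the sign $-1$ in $\langle(-I_2,-1)\rangle$ is used. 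Continuity is automatic since everything is built from continuous data. Finally I would check the two constructions are mutually inverse, which is immediate from the formulas, and that the construction in $(i)$ is independent of choices so that $\rho \mapsto (\BC(\rho), \rho_1, \alpha)$ is genuinely a bijection (not just a surjection).

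The main obstacle I anticipate is the normalisation in part $(ii)$: verifying that the seemingly ad hoc scalar $-\theta(\Frob^{-1})$ in Definition \ref{Def:pol_Galois} is exactly what is needed for $\rho(\Frob)^2$ to be consistent — neither more nor less — requires being scrupulous about (a) the distinction between $\Frob^{-1}$ (geometric, appearing in the twist) and $\Frob^2 \in \Gamma_{\QQ_{p^2}}$ (on which $\rho$ is already specified by $\tilde\rho$), and (b) working modulo the central $\langle(-I_2,-1)\rangle$ so that two a priori different lifts of $\rho(\Frob)$ are identified. Getting the signs and the $\Frob$-exponents to match is the delicate part; everything else is formal.
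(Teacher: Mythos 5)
Your proposal is a genuine argument, whereas the paper's ``proof'' of this theorem is a one-line citation: it invokes \cite[Lemma 2.1.1]{CHT08} ``rephrased in the language of \cite[Section 5.3.2]{KM}''. What you outline is, in effect, a direct reproof of that Clozel--Harris--Taylor lemma in the case at hand, and your skeleton --- decompose $\rho$ into $(\rho\vert_{\Gamma_{\QQ_{p^2}}},\rho(\Frob))$, read the conjugation relation as the intertwining property of $\alpha = \begin{psmallmatrix}0&-1\\1&0\end{psmallmatrix}A^{-1}$, read the constraint on $\rho(\Frob)^2$ as the normalisation by $-\theta(\Frob^{-1})$, and invert the construction --- is exactly the structure of that proof. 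So your route is correct in outline and buys self-containedness that the paper forgoes.

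One bookkeeping point is wrong as stated, and it is precisely the one you flagged as needing care. You say the twist by $\rho_1$ enters ``because the $R^{\times}$-component of $\rho(\Frob)$ contributes to the relation via the cocycle condition''. It does not: $R^{\times}$ is a direct factor of ${}^C\hat{\GG}(R)$ on which $\Gamma_{\QQ_p}$ acts trivially, hence its elements are central, and the $R^{\times}$-component of $\rho(\Frob)$ cancels identically in $\rho(\Frob)\rho(h)\rho(\Frob)^{-1}$. The twist comes instead from the element being conjugated: if you transport the action \eqref{ActionGaloisLgroupFrob} through the identification \eqref{eqn-Cgrpiso} to the product presentation $\GL_2(R)\times R^{\times}$, a short computation (using $\det(G)=A\det(g)$ for $(G,A)=(ag,a^2)$) shows that Frobenius acts there by $(G,A)\mapsto (A\cdot\det(G)^{-1}G,\,A)=(A\cdot J(G^{t})^{-1}J^{-1},\,A)$ with $J=\begin{psmallmatrix}0&1\\-1&0\end{psmallmatrix}$. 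This is the Clozel--Harris--Taylor $\mathcal{G}_n$-action, and the extra scalar $A=\rho_1(h)$ is exactly the twist by $\theta$ in $\tilde{\rho}^{\vee}\otimes\theta$; the paper's phrase ``on both sides of the isomorphism'' glosses over the fact that the two presentations do not carry the same literal formula. With that correction, part $(i)$, the scalar computation for the quadratic condition, and the inverse construction in $(ii)$ go through as you describe, the remaining care being the $\Frob^{-1}$ versus $\Frob^{2}$ and $\langle(-I_2,-1)\rangle$ bookkeeping you already identified.
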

\begin{proof}
This is \cite[Lemma 2.1.1]{CHT08} rephrased in the language of \cite[Section 5.3.2]{KM}.
\end{proof}

From now on, we fix $\theta$ to be the cyclotomic character. This implies in particular that the bijection of Theorem \ref{EquivCparamPolarisedTriples} actually stands between $C$-parameters $\rho : \Gamma_{\QQ_{p}} \to {}^C\GG(R)$ with cyclotomic multiplier and pairs $(\tilde{\rho}, \alpha)$ where $\tilde{\rho} : \Gamma_{\QQ_{p^{2}}} \to \GL_{2}(R)$ is a continuous representation of $\Gamma_{\QQ_{p^{2}}}$ and $\alpha$ is a polarisation of $\tilde{\rho}$ compatible with the cyclotomic character (note that in that case $-\varepsilon(\varphi^{-1}) = -1$).

\subsubsection{Kisin modules with prescribed descent data and height}
\label{SectionDefinitionKisinModules} 
Thanks to Theorem \ref{EquivCparamPolarisedTriples}, we are reduced to understand genuine Galois representations endowed with some extra structure.
To do this, we introduce a category of modules over a power series ring, called {\it Kisin modules}, which allow to translate arithmetic and geometric properties into semi-linear algebra. These modules come with decorations that reflect the extra data that appeared on our Galois representations, and with a natural functor from Kisin modules to Galois representations that preserves these decorations but is not, in general, an equivalence of categories (see Section \ref{SubsectionFromKisinToGalois}). Though the construction of decorated Kisin modules is a bit technical, it is really useful since these modules and their deformations are much more handy to compute than the Galois representations they parametrise.

From now on, we let $R$ denote a complete local Noetherian $\mathcal{O}$-algebra with residue field $k$ and we set $\mathfrak{S}_{R} := \left( \Z_{p^{2}} \otimes_{\Z_{p}} R\right) [[ u ]]$.
The ring $\mathfrak{S}_{R}$ is equipped with a Frobenius endomorphism $\overline{\varphi}$ that is trivial on $R$, sends $u$ to $u^p$, and is the \emph{arithmetic} Frobenius on $\Z_{p^2} $
(i.e. satisfies $\phibar = \varphi^{-1}$ on $\Z_{p^2}$).

\begin{Deff}  
A \emph{Kisin module over $R$ with height in $[0,1]$} is a pair $(\mathfrak{M}, \phi_\mathfrak{M})$, where $\MM$ is a finitely generated projective $\SSSS_{R}$-module, and $\phi_{\MM} : \phibar^{*}\mathfrak{M} := \mathfrak{S}_{R} \otimes_{\mathfrak{S}_{R}, \phibar} \mathfrak{M} \to \mathfrak{M}$ is an $\SSSS_{R}$-linear map, that satisfies
\begin{equation}
\label{EqCaracModKisin}
(u^{p^2-1} + p)\mathfrak{M} \subseteq  \phi_\mathfrak{M} \left( \phibar^*\mathfrak{M}  \right) \subseteq \mathfrak{M} \ .
\end{equation}
\end{Deff}

We let $Y^{[0,1]}(R)$ be the category of Kisin modules over $R$ with height in [0,1].
Given an object $(\mathfrak{M}, \phi_\mathfrak{M})$ in $Y^{[0,1]}(R)$ and an integer $i$ in $\{0,1\}$, we define $\frakM^{(i)}$ as the $R[[u]]$-submodule of $\frakM$ on which $\Z_{p^2}$ acts through the embedding $\sigma_0 \circ \varphi^i$, where $\sigma_{0}$ still denotes the embedding of $\Q_{p^2}$ into $E$ we fixed initially:
\begin{equation}
\label{Eqn:factorKM}
 \frakM^{(i)} = \left\{ m \in \frakM \,|\,\, \forall x \in \Z_{p^2}, (x \otimes 1_R)m = (1_{\Z_{p^2}} \otimes \left(\sigma_0 \circ \varphi^i)(x) \right) m \right\} \ .
 \end{equation}
Note that, as an $R[[u]]$-module, $\frakM$ decomposes as a direct sum $\frakM^{(0)} \oplus \frakM^{(1)}$. 

Recall that $\pi$ is a fixed $(p^{2} - 1)$-th root of $-p$ in $E$ and that we set $L = \Q_{p^2}(\pi)$.
 Given $g$ in $\Gal(L / \Q_{p^2})$, we define $\widehat{g}$ as the $\Z_{p^{2}} \otimes_{\Z_{p}} R$-linear automorphism of $\mathfrak{S}_R$ that sends $u$ to $\left(\widetilde{\omega}_{\pi}(g) \otimes 1_R \right)u$. Note that we have, for all $g$, $h$ in  $\Gal(L / \Q_{p^2})$: $\widehat{gh} =  \widehat{g} \circ \widehat{h} $ and $ \overline{\varphi} \circ \widehat{g} = \widehat{g} \circ  \overline{\varphi} $. Further note that for any $i \in \{0, 1\}$, both $\MM^{(i)}$ and $u\MM^{(i)}$ are stable under the action of $\Gal(L/\QQ_{p^{2}})$ on $\MM$, which ensures that the following definition makes sense.

\begin{Deff}
Let $(\MM, \phi_{\MM})$ be an object of $Y^{[0,1]}(R)$ such that $\MM$ is of rank $2$ as an $\SSSS_{R}$-module, and let $\tau$ be a principal series inertial type. A \emph{descent data of type $\tau$ on $\frakM$} is a collection of $\Z_{p^{2}} \otimes_{\Z_{p}} R$-linear automorphisms $(\widecheck{g})_{g \in \Gal(L / \Q_{p^2})}$ of $\MM$ that satisfies the following conditions:
\begin{enumerate}[(i)]
\item for any $g$ in $\Gal(L / \Q_{p^2}) $, $\widecheck{g}$ is  $\widehat{g}$-semilinear;
\item for all $g$ and $h$ in $\Gal(L / \Q_{p^2}) $, one has $\widecheck{gh} =  \widecheck{g}  \circ \widecheck{h} $;
\item for any $g$ in $\Gal(L / \Q_{p^2}) $, $\widecheck{g}  \circ \phi_\frakM = \phi_\frakM \circ \phibar^*\widecheck{g}$;
\item for any $i$ in $\{ 0,1\}$, the $R[\Gal(L/\QQ_{p^{2}})]$-module $\frakM^{(i)} / u\frakM^{(i)} $ is isomorphic to $\tau \otimes_{\mathcal{O}} R$.
\end{enumerate}
\end{Deff} 
We let $Y^{[0,1],\tau}(R)$ be the full subcategory of $Y^{[0,1]}(R)$ whose objects are rank $2$ modules having descent data of type $\tau$. Finally, we define a full subcategory of $Y^{[0,1], \tau}(R)$ that is the category of Kisin modules we are really interested in. 
\begin{Deff} 
We let $Y^{\tau}(R)$ be the full subcategory of $Y^{[0,1], \tau}(R)$ whose objects are Kisin modules whose determinant satisfy the following chain of inclusions:
\[ \left(u^{p^2-1} + p \right) \det \frakM \subseteq   \phi_\mathfrak{M} \left( \phibar^*\left(\det \mathfrak{M}   \right)   \right)  \subseteq \det \frakM \ . \]
\end{Deff}
Note that this is the category denoted by $Y^{\mu, \tau}(R)$ in \cite[Definition 5.4]{KM}.

Below we give explicit examples of Kisin modules in $Y^{\tau}(R)$ (see Sections~\ref{sssec:shape} and~\ref{sssec:deformKM}). Note that full classification results for the objects of $Y^{\tau}(R)$ are actually available: see Proposition~\ref{prop:shape}, Definition~\ref{Def:gauge_basis} and Remark~\ref{Rem:gauge_basis}, as well as \cite[Proposition~3.1.9]{CDM}.
They give in particular a very concrete description of the action of the Frobenius morphism $\phi_\mathfrak{M}$ in some bases adapted to the descent data.

\subsubsection{Frobenius-twist self-dual Kisin modules and associated Galois representations} 
\label{SubsectionFromKisinToGalois}
As for inertial types, we have a notion of Frobenius-twist self-duality for Kisin modules. Let $\sigma$ denote the automorphism of $\SSSS_{R}$ that is given by the identity map on $R$, fixes $u$, and is given by the arithmetic Frobenius $\varphi^{-1}$ on $\ZZ_{p^{2}}$. Given a Kisin module $(\MM, \phi_{\MM})$ in $Y^{\tau}(R)$, its pullback $\sigma^{*}\MM$ by $\sigma$ defines an object of $Y^{\sigma^{*}\tau}(R)$, where $\sigma^{*}\tau$ is the Frobenius-twist of $\tau$ introduced in Definition \ref{DefFTSDtypes}. We obtain this way a functor from $Y^{\tau}(R)$ to $Y^{\sigma^{*}\tau}(R)$, which can be iterated via successive pullbacks by $\sigma$.
On the other hand, if $R$ is local Artinian, we define the {\it Cartier dual of $\MM$} as $\MM^{\vee} := \Hom_{\SSSS_{R}}(\MM, \SSSS_{R})$. The next proposition ensures that this definition gives what one can expect from Cartier duality in this context, and shows in particular that it is compatible with duality for inertial types.
\begin{Prop}
Assume that $R$ is Artinian. Then the map sending $\MM$ to $\MM^{\vee}$ induces an involutive functor from $Y^{\tau}(R)$ into $Y^{\tau^{\vee}}(R)$.
\end{Prop}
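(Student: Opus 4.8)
The plan is to equip $\MM^{\vee}$ with a Frobenius and with descent data of the \emph{dual} inertial type $\tau^{\vee}$, to verify that the resulting decorated Kisin module lies in $Y^{\tau^{\vee}}(R)$, and finally to check that $\MM\mapsto\MM^{\vee}$ is contravariant on morphisms and that applying it twice is naturally isomorphic to the identity (this is meaningful because $(\tau^{\vee})^{\vee}=\tau$, and $\tau^{\vee}=\eta_1^{-1}\oplus\eta_2^{-1}$ is again a principal series type). Throughout write $E(u):=u^{p^{2}-1}+p$ for the Eisenstein polynomial of $\pi$ over $\QQ_{p^{2}}$. Since $R$ is Artinian, $E(u)$ is a non-zero-divisor of $\mathfrak{S}_{R}$ (it reduces to $u^{p^{2}-1}$ modulo the nilradical of $R$) and $\MM^{\vee}$ is again a finitely generated projective $\mathfrak{S}_{R}$-module of rank $2$; in particular the canonical map $\phibar^{*}(\MM^{\vee})\to(\phibar^{*}\MM)^{\vee}$ is an isomorphism, and I will use it to identify these two modules. \textbf{Construction of $\phi_{\MM^{\vee}}$.} From $E(u)\MM\subseteq\phi_{\MM}(\phibar^{*}\MM)\subseteq\MM$ and injectivity of $\phi_{\MM}$ one sees that $\mathrm{coker}\,\phi_{\MM}$ is killed by $E(u)$; dualising the exact sequence $0\to\phibar^{*}\MM\xrightarrow{\phi_{\MM}}\MM\to\mathrm{coker}\,\phi_{\MM}\to 0$ and using that $\mathfrak{S}_{R}$ has no $E(u)$-torsion then shows $\phi_{\MM}^{\vee}\colon\MM^{\vee}\to(\phibar^{*}\MM)^{\vee}\cong\phibar^{*}(\MM^{\vee})$ is injective with cokernel killed by $E(u)$. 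I would then \emph{define} $\phi_{\MM^{\vee}}\colon\phibar^{*}(\MM^{\vee})\to\MM^{\vee}$ by $\phi_{\MM^{\vee}}(x):=(\phi_{\MM}^{\vee})^{-1}\bigl(E(u)\,x\bigr)$; this is well defined and $\mathfrak{S}_{R}$-linear by the previous sentence, and an elementary chase gives $E(u)\MM^{\vee}\subseteq\phi_{\MM^{\vee}}(\phibar^{*}(\MM^{\vee}))\subseteq\MM^{\vee}$, so that $(\MM^{\vee},\phi_{\MM^{\vee}})\in Y^{[0,1]}(R)$. This is the usual Cartier-duality recipe ``$E(u)$ times the transpose-inverse of $\phi$''.

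\textbf{Descent data of type $\tau^{\vee}$.} For $g\in\Gal(L/\QQ_{p^{2}})$ one has $\widetilde{\omega}_{\pi}(g)^{p^{2}-1}=g(\pi)^{p^{2}-1}/\pi^{p^{2}-1}=g(-p)/(-p)=1$, so $\widehat{g}$ fixes $E(u)$. I would set $\widecheck{g}$ on $\MM^{\vee}$ by $(\widecheck{g}f)(m):=\widehat{g}\bigl(f(\widecheck{g}^{-1}m)\bigr)$. Conditions (i) and (ii) for $\MM^{\vee}$ are immediate from those for $\MM$ together with $\widehat{gh}=\widehat{g}\circ\widehat{h}$; condition (iii) follows from (iii) for $\MM$, from $\phibar\circ\widehat{g}=\widehat{g}\circ\phibar$, and from $\widehat{g}(E(u))=E(u)$, applied to the defining formula for $\phi_{\MM^{\vee}}$. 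For condition (iv), the decomposition $\mathfrak{S}_{R}\cong R[[u]]\times R[[u]]$ along the two embeddings $\sigma_{0},\sigma_{0}\circ\varphi$ gives $(\MM^{\vee})^{(i)}=\Hom_{R[[u]]}(\MM^{(i)},R[[u]])$; since $\MM^{(i)}$ is projective over $R[[u]]$, base change along $R[[u]]\to R[[u]]/u=R$ gives a $\Gal(L/\QQ_{p^{2}})$-equivariant isomorphism
\[
(\MM^{\vee})^{(i)}/u(\MM^{\vee})^{(i)}\;\cong\;\Hom_{R}\bigl(\MM^{(i)}/u\MM^{(i)},\,R\bigr)\,,
\]
the right-hand side carrying the contragredient action. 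As $\MM^{(i)}/u\MM^{(i)}\cong\tau\otimes_{\OO}R$ and the contragredient of $\tau=\eta_{1}\oplus\eta_{2}$ is $\tau^{\vee}=\eta_{1}^{-1}\oplus\eta_{2}^{-1}$, condition (iv) holds with $\tau^{\vee}$ in place of $\tau$; hence $\MM^{\vee}\in Y^{[0,1],\tau^{\vee}}(R)$.

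\textbf{The determinant condition (the main obstacle).} What remains is the extra condition cutting out $Y^{\tau^{\vee}}(R)$ inside $Y^{[0,1],\tau^{\vee}}(R)$, i.e. $E(u)\det\MM^{\vee}\subseteq\phi_{\MM^{\vee}}(\phibar^{*}\det\MM^{\vee})\subseteq\det\MM^{\vee}$. Passing to top exterior powers, $\phi_{\MM^{\vee}}$ induces on $\det\MM^{\vee}\cong(\det\MM)^{\vee}$ the map $E(u)^{2}\cdot(\det\phi_{\MM})^{-1}$; since the right-hand inclusion is automatic from $\MM^{\vee}\in Y^{[0,1]}(R)$, the chain is equivalent to $E(u)\mid\det\phi_{\MM}$. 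As $\MM\in Y^{\tau}(R)$ already forces $\det\phi_{\MM}\mid E(u)$, the genuine content is that $\det\phi_{\MM}$ generates \emph{exactly} the ideal $(u^{p^{2}-1}+p)$ of $\mathfrak{S}_{R}$. This is where I would feed in the explicit classification of objects of $Y^{[0,1],\tau}(R)$ for a principal series type $\tau$ (Proposition~\ref{prop:shape}, Definition~\ref{Def:gauge_basis}, Remark~\ref{Rem:gauge_basis}, and \cite[Proposition~3.1.9]{CDM}): in a gauge basis adapted to the descent data $\phi_{\MM}$ has a normal form from which one reads off both that $\det\phi_{\MM}$ generates $(u^{p^{2}-1}+p)$ and that $\phi_{\MM^{\vee}}$, written in the dual gauge basis, is itself a gauge-basis normal form of the dual shape. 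Carrying out this determinant bookkeeping, and invoking the structure theory correctly, is the step I expect to be the main obstacle; everything else is formal semi-linear algebra.

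\textbf{Functoriality and involutivity.} A morphism $\MM\to\MM'$ dualises to a morphism $(\MM')^{\vee}\to\MM^{\vee}$ that respects all the decorations constructed above, so $\vee$ is a contravariant functor $Y^{\tau}(R)\to Y^{\tau^{\vee}}(R)$. For involutivity, the double-duality isomorphism $\MM\xrightarrow{\ \sim\ }(\MM^{\vee})^{\vee}$ (valid since $\MM$ is finitely generated projective) intertwines the Frobenii: working in $\mathfrak{S}_{R}[1/E(u)]$ one computes $\phi_{(\MM^{\vee})^{\vee}}=E(u)\bigl(\phi_{\MM^{\vee}}^{\vee}\bigr)^{-1}=E(u)\bigl(E(u)\,\phi_{\MM}^{-1}\bigr)^{-1}=\phi_{\MM}$, and this identity then descends to $\mathfrak{S}_{R}$; it likewise intertwines the descent data, since $(\widecheck{g}^{\vee})^{\vee}=\widecheck{g}$. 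Combined with $(\tau^{\vee})^{\vee}=\tau$, this shows that the composite $Y^{\tau}(R)\xrightarrow{\ \vee\ }Y^{\tau^{\vee}}(R)\xrightarrow{\ \vee\ }Y^{\tau}(R)$ is naturally isomorphic to the identity, i.e. $\vee$ is involutive.
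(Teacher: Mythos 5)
Your proposal is correct, and it is essentially the standard Cartier-duality construction that underlies the paper's one-line citation of \cite[Proposition 5.13]{KM}: the paper itself gives no argument beyond that reference, so your write-up supplies the proof behind it rather than taking a different route. The formal parts (defining $\phi_{\MM^{\vee}}$ as $E(u)$ times the transpose-inverse, the contragredient descent data, the height chain for $\MM^{\vee}$, double duality) are all handled correctly, and you rightly isolate the one non-formal point: the chain of inclusions defining $Y^{\tau}(R)$ literally only gives $\det\phi_{\MM}\mid E(u)$, whereas the dual needs $E(u)\mid\det\phi_{\MM}$, so one must know that $\det\phi_{\MM}$ generates exactly $(u^{p^{2}-1}+p)$. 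That is indeed supplied by the gauge-basis normal forms: in each of the three shapes $\mathfrak{t},\mathfrak{t}',\mathfrak{w}$ the partial Frobenius matrices of Definition~\ref{Def:gauge_basis} have determinant a unit multiple of $v+p$ (for $\mathfrak{w}$ this uses the constraint $c_{1,1}c_{2,2}=-pc_{1,2}^{*}c_{2,1}^{*}$), and the $u$-power basis changes relating the isotypic $R[[v]]$-bases to an eigenbasis cancel in the determinant, so the full $R[[u]]$-determinant is again a unit times $E(u)$. With that input your argument is complete.
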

\begin{proof}
This comes from the first statement in \cite[Proposition 5.13]{KM}.
\end{proof}

In particular, if $\tau$ is Frobenius-twist self-dual, then $\sigma^{*}\MM$ and $\MM^{\vee}$ both belong to $Y^{\tau^{\vee}}(R) = Y^{\sigma^{*}\tau}(R)$.
This enables the following definition of polarisation for Kisin modules, which actually mimics the corresponding notion for continuous two-dimensional $R$-linear representations of $\Gamma_{\QQ_{p^{2}}}$ (Definition \ref{Def:pol_Galois}).
\begin{Deff} 
Assume that $R$ is Artinian and that $\tau$ is Frobenius-twist self-dual, and let $(\MM, \phi_{M})$ be an object of $Y^{\tau}(R)$. A {\it polarisation on the Kisin module $\MM$} is an isomorphism $\iota : \sigma^{*}\MM \stackrel{\sim}{\longrightarrow} \MM^{\vee}$ in $Y^{\tau^{\vee}}(R)$ such that the composite map 
\[ \displaystyle \frakM \stackrel{\mathrm{can}}{\longrightarrow} \sigma^{*} \left(\sigma^{*}\frakM \right) \stackrel{\sigma^{*}\iota}{\longrightarrow} \sigma^{*}\left(\frakM^{\vee}\right) \stackrel{\mathrm{can}}{\longrightarrow}  \left(\sigma^{*}\frakM\right)^{\vee} \stackrel{(\iota^{\vee})^{-1}}{\longrightarrow} \frakM \ . \]
is equal to $-\id_{\MM}$.
\end{Deff}
We let $Y^{\tau}_{\mathrm{pol}}(R)$ be the category of {\it Frobenius-twist self-dual Kisin modules (of type $\tau$)}: its objects are pairs $(\MM, \iota)$ with $\MM$ an object of $Y^{\tau}(R)$ and $\iota$ a polarisation on $\MM$, and morphisms in $Y^{\tau}_{\mathrm{pol}}(R)$ are given by morphisms in $Y^{\tau}(R)$ that commute with the given polarisations.

Frobenius-twist self-dual Kisin modules are closely related to polarised Galois representations, hence to $C$-parameters, introduced in Definition \ref{Def:pol_Galois}. Indeed, let us fix a sequence of compatible $p^{n}$-th roots of $-p$ in $\QQQ_{p}$, which means a sequence $(p_{n})_{n \geq 0}$ of elements of $\QQQ_{p}$ such that $p_{0} = -p$ and $p_{n+1}^{p} = p_{n}$ for any integer $n \geq 0$. For any $k \geq 1$, set 
\[\displaystyle \QQ_{p^{k}, \infty} := \bigcup_{n \geq 0} \QQ_{p^{k}}(p_{n}) \ . \]
Following \cite[Section 2.3]{S&S}, one can naturally define a contravariant functor $\Functor$ from $Y^{\tau}(R)$ to the category of $R$-linear representations of $\Gamma_{\Q_{p^{2}, \infty}}$. The following statement essentially claims that if $\tau$ is a reasonable type, any tamely ramified $p$-modular $C$-parameter with cyclotomic multiplier comes from at most one Kisin module in $Y^{\tau}(k)$.
\begin{Lem}
\label{Lem:varK_trivial}
Let $\rhobar: \Gamma_{\Qp} \to {}^CG$ be a tamely ramified $C$-parameter with cyclotomic multiplier and let $\tau : I_{\Q_{p^{2}}} \to \GL_{2}(\OO)$ be a $2$-generic principal series inertial type that is Frobenius-twist self dual.
Then there exists at most one Kisin module $\overline{\MM} \in Y^{\tau}(k)$ such that $\Functor\left(\overline{\MM}\right) \simeq \BC(\rhobar)\vert_{\Gamma_{\QQ_{p^{2}, \infty}}}$. Moreover, if such an $\overline{\MM}$ exists, then there exists a unique polarisation $\bar{\iota}$ on $\MM$ that is compatible (under the previous isomorphism) with the polarisation defined by $\rhobar$ through the bijection of Theorem \ref{EquivCparamPolarisedTriples}.
\end{Lem}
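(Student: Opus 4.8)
The plan is to reduce the entire statement to two properties of the functor $\Functor$ restricted to Kisin modules carrying $2$-generic descent data of type $\tau$: \emph{injectivity on isomorphism classes} (a rigidity statement for Kisin lattices, which genuinely uses $2$-genericity) and \emph{faithfulness} (which needs no genericity, since $\Functor$ factors through the evidently faithful localisation $\MM \mapsto \MM[1/u]$ followed by the anti-equivalence of Section~\ref{SubsectionFromKisinToGalois} between étale $\varphi$-modules with descent data over $k((u))$ and $k$-representations of $\Gamma_{\QQ_{p^2,\infty}}$). Granting these, the first assertion is immediate and the uniqueness of $\bar\iota$ follows from faithfulness, so only the existence of $\bar\iota$ needs extra care.

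For the rigidity statement I would invoke the classification of objects of $Y^\tau(k)$ recalled in Section~\ref{sssec:shape} — the gauge basis of Definition~\ref{Def:gauge_basis} and the associated shape — together with the hypothesis that $\tau$ is $2$-generic. After writing $\SSSS_k \simeq k[[u]] \times k[[u]]$, an object of $Y^\tau(k)$ amounts to a pair of $k[[u]]$-lattices inside the two factors of $\overline{\MM}[1/u]$ equipped with the semilinear Frobenius, and $2$-genericity pins down the shape tightly enough that this Kisin lattice is the unique one (of height in $[0,1]$ with descent data of type $\tau$) sitting inside the fixed étale $\varphi$-module; this is \cite[Proposition~3.1.9]{CDM} (compare \cite{KM} and Proposition~\ref{prop:shape}). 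Hence, if $\Functor(\overline{\MM}_1) \simeq \Functor(\overline{\MM}_2)$, then the anti-equivalence yields an isomorphism $\overline{\MM}_1[1/u] \simeq \overline{\MM}_2[1/u]$ respecting descent data, and the rigidity statement upgrades it to $\overline{\MM}_1 \simeq \overline{\MM}_2$. This proves the first assertion.

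For the ``moreover'' part, assume $\overline{\MM}$ exists, fix an isomorphism $\Functor(\overline{\MM}) \simeq \BC(\rhobar)\vert_{\Gamma_{\QQ_{p^2,\infty}}}$, and let $\bar\alpha$ be the polarisation of $\BC(\rhobar)$ attached to $\rhobar$ by Theorem~\ref{EquivCparamPolarisedTriples}. First I would transport $\bar\alpha$ through the anti-equivalence, using its compatibility with Cartier duality and with the Frobenius twist $\sigma^*$, to get an isomorphism $(\sigma^*\overline{\MM})[1/u] \xrightarrow{\sim} (\overline{\MM}^\vee)[1/u]$ of étale $\varphi$-modules respecting descent data of type $\sigma^*\tau = \tau^\vee$. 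Since $\tau^\vee$ is again $2$-generic and Frobenius-twist self-dual, the rigidity statement (now applied to $\tau^\vee$) forces this isomorphism to carry $\sigma^*\overline{\MM}$ onto $\overline{\MM}^\vee$, hence to arise from a necessarily unique isomorphism $\bar\iota : \sigma^*\overline{\MM} \xrightarrow{\sim} \overline{\MM}^\vee$ in $Y^{\tau^\vee}(k)$. It then remains to check that $\bar\iota$ is a polarisation — that the composite in the definition of a Kisin-module polarisation equals $-\id_{\overline{\MM}}$ — which I would deduce by a diagram chase from the $-\theta(\varphi^{-1})$-symmetry condition satisfied by $\bar\alpha$ (with $\theta$ the cyclotomic character, so that the relevant value is $1$ and matches the sign $-1$); and $\bar\iota$ is compatible with $\bar\alpha$ by construction. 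Uniqueness is then formal: two compatible polarisations $\bar\iota_1, \bar\iota_2$ differ by an automorphism of $\sigma^*\overline{\MM}$ in $Y^{\tau^\vee}(k)$ that $\Functor$ sends to the identity, hence equal to the identity by faithfulness, so $\bar\iota_1 = \bar\iota_2$.

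The hard part will not be the rigidity of Kisin lattices, which is available off the shelf from \cite{CDM} and \cite{KM}, but the compatibility bookkeeping in the last step: identifying the Kisin-side operations $\sigma^*(-)$ and $(-)^\vee$ with the Galois-side operations $(-)^{\varphi^{-1}}$ and $(-)^\vee \otimes \theta$ under the contravariant functor $\Functor$ and the restriction from $\Gamma_{\QQ_{p^2}}$ to $\Gamma_{\QQ_{p^2,\infty}}$, and then verifying that the (rather involved) symmetry condition defining a polarisation of a Kisin module transports exactly to the symmetry condition of Theorem~\ref{EquivCparamPolarisedTriples}, with every Frobenius twist and the sign tracked correctly.
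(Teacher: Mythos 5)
Your proposal is correct and takes essentially the same approach as the paper: the paper's proof is just a citation of Kozio\l--Morra's Lemma 5.20 (itself modelled on the ``unique Kisin lattice'' result, Theorem 3.2 of the reference the paper abbreviates as S.--S.), and your sketch reconstructs precisely the argument behind that citation --- faithfulness of $\Functor$ via \'etale $\varphi$-modules, rigidity of the Kisin lattice under $2$-genericity, and transport of the polarisation. Nothing further is needed.
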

\begin{proof}
This is \cite[Lemma 5.20]{KM}, which is an analogue of \cite[Theorem 3.2]{S&S} for the unitary group $G$.
\end{proof}
Note that the existence of $\overline{\MM}$ as above is a necessary condition for the ring $R_{\rhobar}^{\tau}$ to be non-zero. Also note that the genericity assumption is what ensures the uniqueness of $\overline{\MM}$ when it exists. Hence, from now on, we will always assume that $\tau$ is a $2$-generic principal series Frobenius-twist self-dual inertial type.
Lemma \ref{Lem:varK_trivial} suggests that deforming Kisin modules may be 
a good way to approach deformations of $C$-parameters and the quite explicit nature of these modules suggests that deforming them may give rise to quite explicit rings. The next (and last) subsection will make these expectations a bit more precise under some genericity assumption on $\rhobar$.

\subsection{Some explicit deformation rings for $C$-parameters} 
The goal of this last part is to give some explicit calculations of deformation rings for deformations of $C$-parameters with prescribed inertial type.  We fix a principal series inertial type $\tau$ that is $2$-generic (in the sense of Definition \ref{DefGenericityInertialTypes}).
We write $\tau = \eta_{1} \oplus \eta_{2}$ and \textit{we fix the ordering of these two characters}. For convenience of writing, we set $v := u^{p^{2}-1}$.
\subsubsection{Shape of a Kisin module over $k$} \label{sssec:shape}
In order to deform objects of $Y^{\tau}(k)$, it would be helpful to have a classification of those.
A way to distinguish between them is to introduce the notion of shape of such modules: this is the goal of this first subsection, and it requires some preliminary notations an definitions that are valid over any object $R$ of $\mathrm{Art}(\OO)$.

Given a Kisin module $\MM \in Y^{[0,1], \tau}(R)$, we know from Section \ref{SectionDefinitionKisinModules} that the underlying $R[[u]]$-module decomposes as $\MM^{(0)} \oplus \MM^{(1)}$. To take into account the action of $\Gal(L/\QQ_{p})$ on these components (via $\tau$), we introduce the following $R[[v]]$-submodules of $\MM$.
\begin{Deff} 
\label{Def:isotypic_comp-KisinModule}
For $i$ in $\{ 0, 1 \} $  and $j$ in $\{ 1 , 2 \}$, we write $\MM^{(i)}_{j}$ for the $\eta_{j}$-isotypical component of $\MM^{(i)}$, i.e. for the $R[[v]]$-submodule of elements of $\MM^{(i)}$ on which $\Gal(L/\QQ_{p})$ acts by $\eta_{j}$. Similarly, we write ${}^{\overline{\varphi}}\frakM^{(i)}_{j}$ for the $\eta_{j}$-isotypical component of $\overline{\varphi}^{*}\left(\frakM^{(i)}\right)$
\end{Deff}
Since we have $\overline{\varphi}^{*}\left(\frakM^{(i)}\right) = \left(\overline{\varphi}^{*}\MM\right)^{(i+1)}$ (considering $i+1 \mod 2$ in the exponent if necessary), we can restrict $\phi_{\MM}$ to ${}^{\overline{\varphi}}\frakM^{(i)}_{j}$ to get a map $\phi_{\MM, j}^{(i)} : {}^{\overline{\varphi}}\frakM^{(i)}_{j} \to \MM^{(i+1)}_{j}$.

\begin{Deff} 
\label{Def:EigenBasisKisinModule}
An {\it eigenbasis} of $\MM$ is a pair $\beta = \left( \beta^{(0)} , \beta^{(1)} \right)$ such that, for any $i$ in $\{0, 1\}$, $\beta^{(i)}$ is an ordered basis $\left( f^{(i)}_{1}, f^{(i)}_{2} \right)$ of the $R[[u]]$-module $\MM^{(i)}$ that satisfies: $f^{(i)}_{1} \in \MM^{(i)}_{1}$ and $f^{(i)}_{2} \in \MM^{(i)}_{2}$.
\end{Deff}

Just like above Definition \ref{DefGenericityInertialTypes}, we write the inertial type $\tau$ as $\widetilde{\omega}_2^{-\mathbf{a}_1}\oplus\widetilde{\omega}_2^{-\mathbf{a}_2}$, with $\mathbf{a}_1$ and $\mathbf{a}_2$ between $0$ and $p^2 - 2$ (recall that we have fixed the ordering of the two characters in $\tau$).
For $j$ in $\{0,1\}$, we write $\mathbf{a}_j$ in $p$-basis  : 
$$
\mathbf{a}_j = a_{j,0} +  a_{j,1}p, \text{ with } (a_{j,0},a_{j,1}) \in \{0 ; p-1\}^2
$$
To keep track of the action of $\tau$ on each of the factors $\MM^{(i)}$ (corresponding to the two embeddings of $\ZZ_{p^2}$ into $E$), we also define, for $j$ in $\{0,1\}$ :
$$
\left\{
\begin{array}{c c l}
\mathbf{a}_j^{(0)} &  =  &\mathbf{a}_j  \\
 \mathbf{a}_j^{(1)} &  = & a_{j,1} +  a_{j,0}p.\\
\end{array}
\right.
 $$
Recall that $W$ is the Weyl group of $G$, which is canonically isomorphic to $\SSSS_{2}$.

\begin{Deff} 
\label{DefOrientationInertialType}
An {\it orientation of $\tau$} is a pair $(w_{0}, w_{1})$ of elements of $W \simeq \SSSS_{2}$ such that : 
\begin{equation}
\label{IneqOrientationType}
\forall \ i \in \{0, 1\},\ \mathbf{a}_{w_{i}(1)}^{(i)} \geq \mathbf{a}_{w_{i}(2)}^{(i)} \ . 
\end{equation}
\end{Deff}
\begin{rem}
\label{RemOrientationUnique}
As $\tau$ is assumed to be $2$-generic, its orientation is uniquely defined. The $2$-genericity assumption (see \eqref{GenericityConditionPSType}) implies 
indeed that the inequalities \eqref{IneqOrientationType} must be strict.
\end{rem}
Remark \ref{RemOrientationUnique} ensures that we can talk about \textit{the} orientation $(w_{0}, w_{1})$ of $\tau$. We then have the following result, which gives convenient bases for the isotypical components associated with $w_{0}(2)$ and $w_{1}(2)$, and is straightforward to check from Definition \ref{Def:EigenBasisKisinModule}.
\begin{Prop} Let $\beta = \left( \beta^{(0)} , \beta^{(1)} \right)$ be an eigenbasis of $\MM$. With the notation of Definition \ref{Def:EigenBasisKisinModule}, we set\footnote{With $i-1$ replaced by $1$ in the second formula if $i = 0$, as usual.}, for all $i \in \{0, 1\}$, 
\[
\displaystyle
 \beta^{(i)}_{w_{i}(2)} := \left(u^{{\bf a}_{w_{i}(1)}^{(i)} - {\bf a}_{w_{i}(2)}^{(i)}}f^{(i)}_{w_{i}(1)},   f^{(i)}_{w_{i}(2)}\right) 
 \text{ and } 
 {}^{\overline{\varphi}}\beta^{(i-1)}_{w_{i}(2)} := \left(u^{{\bf a}_{w_{i}(1)}^{(i)} - {\bf a}_{w_{i}(2)}^{(i)}} \otimes  f^{(i-1)}_{w_{i}(1)}, \ 1 \otimes f^{(i-1)}_{w_{i}(2)}\right) \ .
 \]
Then $\beta^{(i)}_{w_{i}(2)}$ is a basis of the $R[[v]]$-module $\frakM^{(i)}_{w_i{}(2)}$, and ${}^{\overline{\varphi}}\beta^{(i-1)}_{w_{i}(2)}$ is a basis of the $R[[v]]$-module ${}^{\overline{\varphi}}\frakM^{(i-1)}_{w_{i}(2)}$.
\end{Prop}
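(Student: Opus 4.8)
The plan is to unwind the definitions of the isotypical components into a concrete statement about the graded pieces of a free $R[[u]]$-module carrying a semilinear action of the cyclic group $\Gal(L/\QQ_{p^{2}})$, and then simply read off the claimed bases. First I would record the two structural facts that drive everything. Since $\widetilde{\omega}_{\pi}$ takes values in the group $\mu_{p^{2}-1}$ of $(p^{2}-1)$-st roots of unity (indeed $\widetilde{\omega}_{\pi}(g)^{p^{2}-1} = g(\pi^{p^{2}-1})/\pi^{p^{2}-1} = g(-p)/(-p) = 1$), the element $v = u^{p^{2}-1}$ is fixed by every $\widehat{g}$, hence by every $\widecheck{g}$; consequently each isotypical component $\frakM^{(i)}_{j}$ and each ${}^{\overline{\varphi}}\frakM^{(i)}_{j}$ is a sub-$R[[v]]$-module, and $R[[u]] = \bigoplus_{0 \leq k \leq p^{2}-2} u^{k}R[[v]]$ as an $R[[v]]$-module. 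Moreover $\widetilde{\omega}_{\pi}$, being injective, identifies $\Gal(L/\QQ_{p^{2}})$ with $\mu_{p^{2}-1}$, so it generates the character group of $\Gal(L/\QQ_{p^{2}})$.

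Next I would carry out the core computation on $\frakM^{(i)}$. For an eigenbasis $\beta^{(i)} = (f^{(i)}_{1}, f^{(i)}_{2})$ we have $\widecheck{g}\,f^{(i)}_{j} = \eta_{j}(g)\,f^{(i)}_{j}$ for all $g$, and comparing with condition (iv) of the descent data, and bearing in mind that $\ZZ_{p^{2}}$ acts on $\frakM^{(i)}$ through $\sigma_{0}\circ\varphi^{i}$, one checks that this character is $\widetilde{\omega}_{\pi}^{-{\bf a}^{(i)}_{j}}$. Since $\widehat{g}(u) = \widetilde{\omega}_{\pi}(g)\,u$, the element $u^{k}f^{(i)}_{j}$ transforms by $\widetilde{\omega}_{\pi}^{\,k - {\bf a}^{(i)}_{j}}$; as $\widetilde{\omega}_{\pi}$ generates the character group, there is exactly one $k \in \{0,\dots,p^{2}-2\}$ for which $u^{k}f^{(i)}_{j}$ lies in $\frakM^{(i)}_{w_{i}(2)}$, namely $k \equiv {\bf a}^{(i)}_{j} - {\bf a}^{(i)}_{w_{i}(2)} \pmod{p^{2}-1}$. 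For $j = w_{i}(2)$ this is $k = 0$, so $f^{(i)}_{w_{i}(2)} \in \frakM^{(i)}_{w_{i}(2)}$; for $j = w_{i}(1)$ it is ${\bf a}^{(i)}_{w_{i}(1)} - {\bf a}^{(i)}_{w_{i}(2)}$, which by the strict inequality of Remark \ref{RemOrientationUnique} (valid because $\tau$ is $2$-generic) already lies in $\{0,\dots,p^{2}-2\}$ and is thus the distinguished representative. Grading the decomposition $\frakM^{(i)} = R[[u]]f^{(i)}_{1} \oplus R[[u]]f^{(i)}_{2}$ by $\widetilde{\omega}_{\pi}$-isotypical pieces then yields $\frakM^{(i)}_{w_{i}(2)} = R[[v]]\,u^{{\bf a}^{(i)}_{w_{i}(1)} - {\bf a}^{(i)}_{w_{i}(2)}}f^{(i)}_{w_{i}(1)} \oplus R[[v]]\,f^{(i)}_{w_{i}(2)}$, which is exactly the claim that $\beta^{(i)}_{w_{i}(2)}$ is an $R[[v]]$-basis.

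For the second assertion I would run the identical argument inside $\overline{\varphi}^{*}\frakM$, using that $\overline{\varphi}^{*}(\frakM^{(i-1)}) = (\overline{\varphi}^{*}\frakM)^{(i)}$, that $(1 \otimes f^{(i-1)}_{1}, 1 \otimes f^{(i-1)}_{2})$ is an $R[[u]]$-basis of it, and that $\phibar^{*}\widecheck{g}$ respects the descent data via condition (iii). The only new input is how the descent-data characters behave under $\phibar^{*}$: since $\phibar$ sends $u$ to $u^{p}$, pullback multiplies the $\widetilde{\omega}_{\pi}$-exponents by $p$, and multiplication by $p$ modulo $p^{2}-1$ interchanges the two $p$-adic digits $a_{j,0}, a_{j,1}$, i.e. sends ${\bf a}^{(i-1)}_{j}$ to ${\bf a}^{(i)}_{j}$. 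Hence $1 \otimes f^{(i-1)}_{j}$ transforms by $\widetilde{\omega}_{\pi}^{-{\bf a}^{(i)}_{j}}$, and the same reasoning as above shows that ${}^{\overline{\varphi}}\beta^{(i-1)}_{w_{i}(2)} = \big(u^{{\bf a}^{(i)}_{w_{i}(1)} - {\bf a}^{(i)}_{w_{i}(2)}}\otimes f^{(i-1)}_{w_{i}(1)},\ 1\otimes f^{(i-1)}_{w_{i}(2)}\big)$ is an $R[[v]]$-basis of ${}^{\overline{\varphi}}\frakM^{(i-1)}_{w_{i}(2)}$.

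The linear algebra itself (direct-sum decompositions, freeness over $R[[v]]$) is immediate; the step requiring care, and the only genuine bookkeeping, is tracking the Galois characters attached to the $f^{(i)}_{j}$ across the two embeddings $\sigma_{0}\circ\varphi^{i}$ and across the Frobenius pullback $\phibar^{*}$ — in particular verifying $p\,{\bf a}^{(i-1)}_{j} \equiv {\bf a}^{(i)}_{j} \pmod{p^{2}-1}$ and invoking $2$-genericity at the right moment so that the exponent ${\bf a}^{(i)}_{w_{i}(1)} - {\bf a}^{(i)}_{w_{i}(2)}$ genuinely lies in $\{0,\dots,p^{2}-2\}$ and not merely in a residue class.
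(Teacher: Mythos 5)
Your argument is correct and is exactly the verification the paper leaves implicit when it declares the proposition ``straightforward to check from Definition \ref{Def:EigenBasisKisinModule}'': decompose $R[[u]] = \bigoplus_{k=0}^{p^2-2} u^k R[[v]]$, track the $\Gal(L/\QQ_{p^2})$-characters (noting that $v$ is Galois-invariant and that the embedding twist by $\varphi^i$ multiplies exponents by $p$, hence swaps the $p$-adic digits), and read off the unique exponent $k \equiv {\bf a}^{(i)}_{w_i(1)} - {\bf a}^{(i)}_{w_i(2)} \pmod{p^2-1}$ lying in $\{0,\dots,p^2-2\}$. The only cosmetic point is that the non-strict orientation inequality of Definition \ref{DefOrientationInertialType} already places that exponent in the admissible range, so the strictness coming from $2$-genericity is needed only for the uniqueness of the orientation, not for this step.
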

The next definitions and proposition justify the name ``convenient bases" used above.
\begin{Deff} 
Given an eigenbasis $\beta$ of $\MM$ and an index $i \in \{0,1\}$, we define $A^{(i)}_{\beta} \in \mathrm{M}_{2}(R[[v]])$ as the matrix of the $R[[v]]$-linear map $\phi_{\MM, w_{i+1}(2)}^{(i)} : {}^{\overline{\varphi}}\frakM^{(i)}_{w_{i+1}(2)} \to \MM^{(i+1)}_{w_{i+1}(2)}$ when ${}^{\overline{\varphi}}\frakM^{(i)}_{w_{i+1}(2)}$ is endowed with the basis ${}^{\overline{\varphi}}\beta^{(i)}_{w_{i+1}(2)}$ and $\MM^{(i+1)}_{w_{i+1}(2)}$ is endowed with the basis $\beta^{(i+1)}_{w_{i+1}(2)}$. The matrix $A^{(i)}_{\beta}$ is called the {\it matrix of the partial Frobenius of $\MM$ at embbeding $i$ and with respect to $\beta$.}
\end{Deff}

\begin{Prop} 
\label{prop:shape} 
Let $\MM \in Y^{\tau}(k)$ be a Kisin module over $k$. Then there exists an eigenbasis $\beta$ of $\MM$ such that each of the matrices $A^{(0)}_{\beta}$ and $A^{(1)}_{\beta}$ has one of the following forms (with $\bar{c}_{i,j} \in k$ and $\bar{c}^{*}_{i,j} \in k^{\times}$):
\begin{center}
\begin{tabular}{|c||c|c|c|}
\hline
$\widetilde{w}_i$ & $\mathfrak{t}$ & $\mathfrak{t}' $& $\mathfrak{w}$ \\
\hline
$A^{(i)}_{\overline{\beta}}$ & 
$\begin{pmatrix}
v \bar{c}^*_{1,1} & 0 \\
v \bar{c}_{2,1}  & \bar{c}^*_{2,2}
\end{pmatrix}$
 &  
 $\begin{pmatrix}
\bar{c}^*_{1,1} & \bar{c}_{1,2}  \\
0 & v \bar{c}^*_{2,2}
\end{pmatrix}$
  & 
  $\begin{pmatrix}
0& \bar{c}^*_{1,2}  \\
 v \bar{c}^*_{2,1} & 0 \\
\end{pmatrix}$
    \\
\hline
\end{tabular}
\end{center}
Moreover, the pair $(\widetilde{w}_{0}, \widetilde{w}_{1})$ in $\{ \mathfrak{t} , \mathfrak{t}' , \mathfrak{w} \}^2$ determined that way does not depend on the choice of the eigenbasis $\beta$, but only on the Kisin module $\MM$.
\end{Prop}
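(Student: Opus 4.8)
The plan is to fix an arbitrary eigenbasis $\beta_{0}$ of $\MM$ and to classify, up to the action of the group of eigenbasis changes, the pair of partial-Frobenius matrices $A^{(0)} := A^{(0)}_{\beta_{0}}$ and $A^{(1)} := A^{(1)}_{\beta_{0}}$ in $\mathrm{M}_{2}(k[[v]])$. The first step is to translate the conditions defining $Y^{\tau}(k)$ into matrix language. Since $p = 0$ in $k$, the height condition \eqref{EqCaracModKisin} restricted to the relevant isotypic leg reads $v\,\MM^{(i+1)}_{w_{i+1}(2)} \subseteq \phi^{(i)}_{\MM, w_{i+1}(2)}\bigl({}^{\phibar}\MM^{(i)}_{w_{i+1}(2)}\bigr) \subseteq \MM^{(i+1)}_{w_{i+1}(2)}$, while the determinant condition forces $\det A^{(i)}$ to be $v$ times a unit of $k[[v]]$. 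Over the discrete valuation ring $k[[v]]$ these two facts say precisely that each $A^{(i)}$ has elementary divisors $(1, v)$; in particular $A^{(i)} \bmod v$ has rank one.

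Next I would pin down the symmetry group. An eigenbasis change is a pair $(P^{(0)}, P^{(1)})$ of $k[[u]]$-changes of basis of $\MM^{(0)}, \MM^{(1)}$ preserving the $\eta_{1}$- and $\eta_{2}$-isotypic parts; written in the convenient bases introduced above, it replaces $(A^{(0)}, A^{(1)})$ by a pair of the form $\bigl(Q^{(1)} A^{(0)}\,\phibar^{*}(Q^{(0)})^{-1},\ Q^{(0)} A^{(1)}\,\phibar^{*}(Q^{(1)})^{-1}\bigr)$, with $Q^{(0)}, Q^{(1)} \in \GL_{2}(k[[v]])$. The key point, and the place where the $2$-genericity of $\tau$ is used, is that the isotypic constraints, together with the fact that genericity places the exponents ${\bf a}^{(i)}_{w_{i}(1)} - {\bf a}^{(i)}_{w_{i}(2)}$ strictly between $0$ and $p^{2}-1$, force $Q^{(0)}$ and $Q^{(1)}$ to range over exactly the Iwahori subgroup $I \subset \GL_{2}(k[[v]])$ of matrices that are upper triangular modulo $v$ (for the ordering prescribed by the orientation of $\tau$) --- neither more nor less. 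Note also that $\phibar^{*}$ preserves $I$, since it raises $v$-adic valuations and leaves reduction modulo $v$ unchanged.

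From this I would derive existence using the Iwahori--Bruhat (Cartan) decomposition $\GL_{2}\bigl(k((v))\bigr) = \bigsqcup_{\widetilde{w}} I\,\widetilde{w}\,I$ over the extended affine Weyl group of $\GL_{2}$: each $A^{(i)}$, having elementary divisors $(1, v)$, lies in a double coset $I\,\widetilde{w}_{i}\,I$ with $\widetilde{w}_{i}$ in the finite-Weyl orbit of the cocharacter $\mathrm{diag}(1, v)$, and a short case analysis --- equivalently, the classification of the objects of $Y^{\tau}(k)$ recorded in \cite[Proposition~3.1.9]{CDM} --- excludes one of the four a priori positions, leaving $\widetilde{w}_{i} \in \{\mathfrak{t}, \mathfrak{t}', \mathfrak{w}\}$, namely the classes of $\mathrm{diag}(v, 1)$, $\mathrm{diag}(1, v)$ and $\begin{psmallmatrix} 0 & 1 \\ v & 0 \end{psmallmatrix}$. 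Picking inside each double coset the explicit representative displayed in the statement, and absorbing the remaining $I$-factors into a further eigenbasis change, produces an eigenbasis $\beta$ for which $A^{(0)}_{\beta}$ and $A^{(1)}_{\beta}$ have the asserted shapes; matching $v$-valuations of the entries against the Iwahori condition then forces the marked entries to be units of $k$.

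The last assertion of the proposition is built into this picture: the double coset $I\,\widetilde{w}_{i}\,I$ containing $A^{(i)}$ is visibly unchanged under left multiplication by $Q^{(i+1)} \in I$ and right multiplication by $\phibar^{*}(Q^{(i)})^{-1} \in I$, so $\widetilde{w}_{i}$, and hence the pair $(\widetilde{w}_{0}, \widetilde{w}_{1})$, depends only on $\MM$ and on the fixed ordering of the two characters of $\tau$. The step I expect to be the main obstacle is the second one: carrying out the isotypic bookkeeping through the convenient bases precisely enough to identify the symmetry group as an Iwahori --- this is where $2$-genericity is genuinely needed, and where the two embeddings $i = 0, 1$ are coupled through $\phibar^{*}$, which simultaneously swaps the two $\ZZ_{p^{2}}$-factors and raises $u$-powers --- together with the finite case analysis in the third step that isolates the three surviving affine Weyl elements.
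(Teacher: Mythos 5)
The paper itself offers no proof of this proposition, deferring entirely to \cite[Section 5.1.9]{KM} and \cite[Proposition 3.1.9]{CDM}, and your sketch reconstructs precisely the argument of those references: translate the height and determinant conditions into the statement that each $A^{(i)}$ lies in $\GL_2(k[[v]])\,\mathrm{diag}(1,v)\,\GL_2(k[[v]])$, use $2$-genericity to identify the group of eigenbasis changes (in the convenient bases) with the Iwahori subgroup acting by $A^{(i)} \mapsto Q^{(i+1)}A^{(i)}\phibar^*(Q^{(i)})^{-1}$, and read off the shape as the Iwahori double coset, indexed by the three admissible elements $\mathfrak{t}, \mathfrak{t}', \mathfrak{w}$ of the extended affine Weyl group. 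The one step you leave implicit --- why the fourth, length-two double coset $I\begin{psmallmatrix}0 & v\\ 1 & 0\end{psmallmatrix}I$ cannot occur, which requires unwinding the $u$-power bookkeeping of the descent data rather than just the elementary-divisor computation --- is exactly the point you correctly delegate to the cited classification, so the proposal matches the intended proof.
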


\begin{Deff}
\label{DefShape}
The pair $(\widetilde{w}_{0}, \widetilde{w}_{1})$ given by Proposition \ref{prop:shape} is called the {\it shape of $\MM$}.
\end{Deff}

\begin{rem}
The actual definition of the shape involves the extended Weyl group of $G$, but we choose to give this handy definition to avoid extra technicalities in this survey paper. For more details on  the way to define the shape via Weyl elements, we suggest to refer to \cite[Section 5.1.9]{KM}.
\end{rem}

\subsubsection{A deformation problem for Kisin modules}\label{sssec:deformKM}
In this penultimate subsection, we define a deformation problem for Frobenius-twist self-dual Kisin modules related to the deformation ring we introduced in Section \ref{sssec:def_Galois_potcrystype} for $C$-parameters.
We have a precise understanding of the form of the Kisin modules for any local Artinian $R$, generalising Proposition \ref{prop:shape}, which holds for the finite field $k$. This understanding leads to a very explicit description of the ring representing these deformations of Kisin modules (Theorem~\ref{Thm:expl_def_Kismod}). 
We first require a basis compatible with all the structures (descent data, polarisation) on the Kisin modules. We introduce this basis now.

 \begin{Deff}\label{Def:gauge_basis}
 Let $\frakM$ be  in $Y^{\tau} (R)$ and $(\widetilde{w}_0, \widetilde{w}_1)$ be the shape of its reduction $\frakM \otimes_R k$ to $k$.
 \begin{enumerate}[(i)]
\item A \emph{gauge basis} of $\frakM$ is an eigenbasis $\beta$ of $\frakM$ 
such that the matrices of the partial Frobenius $\left(A^{(0)}_{\beta}, A^{(1)}_{\beta} \right) $ have the form in the table below, prescribed by the shape of $\overline{\frakM}$ (with $c_{i,j}$ in $R$, $c^*_{i,j}$ in $R^\times$ and $c_{i,i}$ in $R \setminus R^\times$).
\begin{center}
\begin{tabular}{|c||c|c|c|}
\hline
$\widetilde{w}_i$ & $\mathfrak{t}$ & $\mathfrak{t}' $& $\mathfrak{w}$ \\
\hline
$A^{(i)}_{\beta}$ & 
$\begin{pmatrix}
(v + p)c^*_{1,1} & 0 \\
v {c}_{2,1}  &{c}^*_{2,2}
\end{pmatrix}$
 &  
 $\begin{pmatrix}
{c}^*_{1,1} &{c}_{1,2}  \\
0 & (v +p){c}^*_{2,2}
\end{pmatrix}$
  & 
  $\begin{pmatrix}
c_{1,1} & {c}^*_{1,2}  \\
 v {c}^*_{2,1} & c_{2,2} \\
\end{pmatrix}$
   \\
    & & & $c_{1,1}c_{2,2} = -pc_{1,2}^*c_{2,1}^*$ \\
\hline
\end{tabular}
\end{center}
\item If moreover the inertial type $\tau$ is Frobenius-twist self-dual, $R$ is local Artinian and $\iota$ is a polarisation on $\frakM$, a gauge basis $\beta$ is called \emph{compatible with $\iota$} if it satisfies : $\iota\left( \sigma^*\beta \right) = \beta^\vee$.
\end{enumerate}
 \end{Deff}

 \begin{Rem}
 \label{Rem:gauge_basis}
 \begin{itemize}
\item For $R = k$, a gauge basis is an eigenbasis such that the matrices of the partial Frobenius have the form in the table of Proposition \ref{prop:shape}.
\item For general local Artinian $R$, compatible gauge bases exist by analogues of \cite[Theorem 4.1]{S&S}  and \cite[Proposition 5.17]{KM}.
\end{itemize}
 \end{Rem}

From now on, we assume that $\tau$ is Frobenius-twist self-dual. Recalling that $s = \begin{psmallmatrix} 0 & 1 \\ 1 & 0 \end{psmallmatrix}$ is also a lift in $G$ of the non-trivial element of $W$, we can state the following result \cite[Lemma 5.18(i)]{KM}.
\begin{Lem}
Let $(\frakM, \iota) $ be an object of $Y^{\tau}_{\mathrm{pol}} (R)$ and let $\beta$ be a gauge basis of $\MM$ compatible with $\iota$. Then we have the following matrices relation: 
\[ \displaystyle A^{(0)}_{\beta} = (v+p)s \left( A^{(1)}_{\beta}\right)^{-\mathrm{t}}s \ . \]
\end{Lem}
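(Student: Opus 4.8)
The plan is to translate the defining property of the polarisation $\iota$ into a matrix identity by writing everything in the gauge basis $\beta$, using two elementary bookkeeping computations: the behaviour of the partial-Frobenius matrices under the pullback $\frakM \mapsto \sigma^{*}\frakM$ and under Cartier duality $\frakM \mapsto \frakM^{\vee}$.

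First I would record the effect of $\sigma^{*}$. Since $\sigma$ acts on $\ZZ_{p^{2}}$ by $\varphi^{-1} = \varphi$, it interchanges the two idempotent components, so $(\sigma^{*}\frakM)^{(i)}$ is identified with $\frakM^{(i+1)}$ (indices mod $2$); because $\sigma$ is the identity on $R$ and fixes $u$, hence $v$, no matrix entries are altered, and the only change is the relabelling of the two embeddings together with the induced change of orientation of $\sigma^{*}\tau = \tau^{\vee}$. As $\tau$ is $2$-generic, negating the digit expansions of its exponents reverses the strict inequalities defining the orientation, so the orientation of $\tau^{\vee}$ is opposite to that of $\tau$; concretely, the partial-Frobenius matrices of $\sigma^{*}\frakM$ in the basis $\sigma^{*}\beta$ are the matrices $A^{(i+1)}_{\beta}$, conjugated by $s$ to account for the reversed orientation. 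Next I would do the analogous computation for $\frakM^{\vee}$: over Artinian $R$, Cartier duality carries an object of $Y^{\tau}(R)$ to one of $Y^{\tau^{\vee}}(R)$, the dual eigenbasis $\beta^{\vee}$ pairs the $\eta_{j}$-line of $\frakM$ with the $\eta_{j}^{-1}$-line of $\frakM^{\vee}$, and the ``height in $[0,1]$'' normalisation turns the partial Frobenius into $(v+p)$ times its inverse transpose; the orientation reversal of $\tau^{\vee}$ again contributes a conjugation by $s$. I would carry both steps out by direct matrix manipulation, using the explicit tables of Proposition \ref{prop:shape} and Definition \ref{Def:gauge_basis} as a guide and as a sanity check — in particular verifying that the operation $B \mapsto (v+p)\, s\, B^{-\mathrm{t}}\, s$ preserves each of the three shapes $\mathfrak{t}$, $\mathfrak{t}'$, $\mathfrak{w}$ and is compatible with the constraint $c_{1,1}c_{2,2} = -p\,c^{*}_{1,2}c^{*}_{2,1}$ in shape $\mathfrak{w}$.

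With these computations in hand I would invoke the hypotheses: $\iota : \sigma^{*}\frakM \to \frakM^{\vee}$ is an isomorphism in $Y^{\tau^{\vee}}(R)$ that is compatible with the gauge basis, i.e. $\iota(\sigma^{*}\beta) = \beta^{\vee}$. The first property says $\iota$ commutes with the Frobenius structures on $\sigma^{*}\frakM$ and $\frakM^{\vee}$; the second says that in the bases $\sigma^{*}\beta$ and $\beta^{\vee}$ the map $\iota$ is the identity on each isotypic component. Hence the partial-Frobenius matrix of $\sigma^{*}\frakM$ (in $\sigma^{*}\beta$) equals that of $\frakM^{\vee}$ (in $\beta^{\vee}$) at each embedding. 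Substituting the two expressions above and cancelling the conjugation by $s$ common to both sides leaves, at the embedding $i = 1$ (so $i+1 = 0$), exactly
\[
A^{(0)}_{\beta} \;=\; (v+p)\, s\, \bigl(A^{(1)}_{\beta}\bigr)^{-\mathrm{t}}\, s \ ,
\]
which is the asserted relation; the companion identity at $i = 0$ is equivalent to this one using $s^{2} = I_{2}$ and the determinant condition defining $Y^{\tau}(R)$.

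I expect the real difficulty to be organisational rather than conceptual: pinning down the orientation conventions so that the two conjugations by $s$ genuinely reduce to the single $s$-conjugation in the final formula, confirming that the height twist contributes the factor $(v+p)$ uniformly on both isotypic pieces, and checking that the sign in the definition of a polarisation (the composite being $-\id_{\frakM}$) does not survive into this particular identity — which it does not, since replacing $\iota$ by $-\iota$ affects neither the Frobenius-commutation nor the matrix of $\iota$ up to scalars, so only the weaker statement ``$\iota$ is some polarisation'' is used here. A clean alternative is simply to cite \cite[Lemma 5.18(i)]{KM}.
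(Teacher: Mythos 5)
The paper does not actually prove this lemma: it is stated as a direct quotation of \cite[Lemma 5.18(i)]{KM}, which is the ``clean alternative'' you mention in your last sentence. Your sketch is therefore a reconstruction of the argument behind that citation rather than a parallel to anything in the paper itself, and as a reconstruction it is sound in outline: the three ingredients (swap of the two embedding components under $\sigma^{*}$, the $(v+p)\cdot(\,\cdot\,)^{-\mathrm{t}}$ rule for the partial Frobenius of the Cartier dual coming from the height-in-$[0,1]$ normalisation, and the reversal of orientation of $\tau^{\vee}$ forced by $2$-genericity) are exactly the right ones, and the gauge-basis compatibility $\iota(\sigma^{*}\beta)=\beta^{\vee}$ is correctly identified as the step that turns ``$\iota$ commutes with Frobenius'' into an equality of matrices. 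Two small points of caution. First, your phrase ``cancelling the conjugation by $s$ common to both sides'' is in tension with the fact that the final formula still contains an $s$-conjugation; the $s$'s on the two sides do not all cancel, and tracking which ones survive is precisely the orientation bookkeeping you flag at the end, so the sketch is honest but that step is not yet a computation. Second, the identities at $i=0$ and $i=1$ are not equivalent to one another: applying $B\mapsto (v+p)\,s\,B^{-\mathrm{t}}\,s$ twice returns $B$ (using $s^{2}=I_{2}$ and $\det$ bookkeeping), so each relation is consistent with the other but neither implies it; both are separate consequences of $\iota$ being a morphism at each embedding. Neither point is a gap in the strategy, only in the claimed logical economy.
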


\begin{rem}
\label{FormShapeModp}
Note that this relation implies in particular that the shape of an object $(\overline{\MM}, \overline{\iota}) \in Y^{\tau}_{\mathrm{pol}}(k)$ is necessarily of the form $(\widetilde{w}, \widetilde{w})$, i.e. that $\widetilde{w}_{0} = \widetilde{w}_{1}$ \cite[Lemma 5.18 (i)]{KM}.
\end{rem}
We are now ready to define the aforementioned nice deformation problem for Frobenius-twist self-dual Kisin modules over $k$. 
\begin{Deff} 
\label{DefinitionDefPbKisinModules}
Let $\tau$ be a $2$-generic principal series type that is Frobenius-twist self-dual. Let $(\overline{\MM}, \overline{\iota})$ be an object of $Y^{\tau}_{\mathrm{pol}}(k)$ and let $\overline{\beta}$ be a gauge basis on $\overline{\MM}$ that is compatible with $\overline{\iota}$. For any ring $R$ in $\mathrm{Art}(\OO)$, we define $D^{\tau, \overline{\beta}}_{\overline{\frakM}, \mathrm{pol}}(R)$ as the set of triples $\left( \frakM_R, \iota_R , \beta_R \right)$, where $(\MM_{R}, \iota_{R})$ denotes an object of $Y^{\tau}_{\mathrm{pol}}(R)$ that lifts $(\overline{\MM}, \overline{\iota})$, i.e. such that $\MM_{R} \otimes_{R} k \simeq \overline{\MM}$ and $\iota_{R} \otimes_{R} k = \bar{\iota}$, and $\beta_{R}$ a gauge basis of $\MM$ that is compatible with $\iota$ and lifts $\overline{\beta}$.
\end{Deff}

According to \cite[Section 5.3.1]{KM}, this defines a functor $D^{\tau, \overline{\beta}}_{\overline{\frakM}, \mathrm{pol}}$ that is representable by some object $R^{\tau, \overline{\beta}}_{\overline{\frakM}, \mathrm{pol}} \in \mathrm{Noe}(\OO)$. The latter ring has the following explicit description \cite[Theorem 5.19]{KM}, which can be deduced from the previous results once we note that the choice of a triple $\left( \MM_{R}, \iota_{R}, \beta_{R}\right)$ as above is equivalent to the choice of a matrix $A^{(1)}_{\beta_{R}}$ whose form is prescribed by the shape of $\overline{\MM}$ in the table of Definition \ref{Def:gauge_basis}.
\begin{Thm}
\label{Thm:expl_def_Kismod}
We keep the previous notation and assumptions. In particular, we let $(\widetilde{w},\widetilde{w})$ be the shape of $\overline{\mathfrak{M}}$. Then $R^{\tau, \overline{\beta}}_{\overline{\frakM}, \mathrm{pol}}$ is isomorphic to the ring $R^{\mathrm{expl}}_{\widetilde{w}}$ given by the following table:
\begin{center}
\begin{tabular}{|c||c|c|c|}
\hline
$\widetilde{w}$ & $\mathfrak{t}$ & $\mathfrak{t}'$ & $\mathfrak{w}$ \\
\hline
$R_{\widetilde{w}}^{\mathrm{expl}}$ & $\mathcal{O}[[c_{2,1},c_{1,1}^{\ast}, c_{2,2}^{\ast}]]$ &  $\mathcal{O}[[c_{1,2},c_{1,1}^{\ast}, c_{2,2}^{\ast}]]$ &  $\mathcal{O}[[c_{1,1},c_{2,2},c_{1,2}^{\ast}, c_{2,1}^{\ast}]]/(c_{1,1}c_{2,2} +p)$  \\
\hline
\end{tabular}
\end{center}
In particular, the form of $R^{\tau, \overline{\beta}}_{\overline{\frakM}, \mathrm{pol}}$ only depends on the shape of $\overline{\MM}$.
\end{Thm}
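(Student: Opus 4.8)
The plan is to reduce the statement to a bookkeeping exercise about the matrix $A^{(1)}_{\beta_R}$, using the structural results already assembled. First I would recall, from the remark following Definition~\ref{DefinitionDefPbKisinModules}, that the choice of a triple $(\MM_R, \iota_R, \beta_R)$ lifting $(\overline{\MM}, \overline{\iota}, \overline{\beta})$ is equivalent to the choice of a single matrix $A^{(1)}_{\beta_R} \in \mathrm{M}_2(R[[v]])$ whose shape is dictated by the shape $(\widetilde w, \widetilde w)$ of $\overline{\MM}$: indeed $A^{(0)}_{\beta_R}$ is then forced by the polarisation relation $A^{(0)}_{\beta} = (v+p)s(A^{(1)}_\beta)^{-\mathrm{t}}s$ of Lemma~5.18(i) (quoted just above), and the descent data and eigenbasis conditions determine everything else. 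So the functor $D^{\tau,\overline\beta}_{\overline\frakM,\mathrm{pol}}$ is corepresented by the universal ring of matrices of the prescribed form reducing to the fixed matrix over $k$ from Proposition~\ref{prop:shape}.

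The core of the proof is then to write down, case by case on $\widetilde w \in \{\mathfrak t, \mathfrak t', \mathfrak w\}$, the entries of $A^{(1)}_{\beta_R}$ as in the table of Definition~\ref{Def:gauge_basis}(i), and to observe that the only constraints are: (a) the entries marked $c^*_{i,j}$ lie in $R^\times$, those marked $c_{i,j}$ lie in $R$, and the diagonal $c_{i,i}$ lie in $R\setminus R^\times$; (b) in the $\mathfrak w$ case, the determinant relation $c_{1,1}c_{2,2} = -p\,c^*_{1,2}c^*_{2,1}$ holds; and (c) the reduction modulo $\mathfrak m_R$ recovers the fixed mod-$p$ matrix. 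Condition (a) is automatic once we record that an element of a local Artinian (or complete Noetherian local) $\OO$-algebra $R$ is a unit iff its image in $k$ is nonzero, so prescribing $\overline{c}^{\,*}_{i,j}\in k^\times$ and lifting it imposes no closed condition; a starred variable contributes a free formal variable (after absorbing the fixed nonzero reduction into a unit), while an unstarred off-diagonal variable likewise contributes a free formal variable. The diagonal variables $c_{i,i}$ in the $\mathfrak w$ case reduce to $0$ in $k$, so they are genuine deformation variables lying in the maximal ideal. Counting variables in each column of the table of Proposition~\ref{prop:shape} then yields: three free variables in the $\mathfrak t$ and $\mathfrak t'$ cases (one off-diagonal entry plus two units), hence $R^{\tau,\overline\beta}_{\overline\frakM,\mathrm{pol}} \simeq \OO[[c_{2,1},c^*_{1,1},c^*_{2,2}]]$ resp. $\OO[[c_{1,2},c^*_{1,1},c^*_{2,2}]]$; and four variables $c_{1,1},c_{2,2},c^*_{1,2},c^*_{2,1}$ in the $\mathfrak w$ case, subject to the single relation $c_{1,1}c_{2,2}+p\,(\text{unit})=0$, which after rescaling the starred variables becomes $c_{1,1}c_{2,2}+p=0$, giving $\OO[[c_{1,1},c_{2,2},c^*_{1,2},c^*_{2,1}]]/(c_{1,1}c_{2,2}+p)$.

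A few points need care. One should check that the polarisation-compatibility condition $\iota(\sigma^*\beta)=\beta^\vee$ together with the shape relation $A^{(0)}_\beta = (v+p)s(A^{(1)}_\beta)^{-\mathrm t}s$ does not impose any \emph{further} constraint on $A^{(1)}_{\beta_R}$ beyond membership in the tabulated family — this is where Remark~\ref{FormShapeModp} ($\widetilde w_0 = \widetilde w_1$) is used, ensuring the two embeddings carry the same shape and the relation is consistent. One must also verify that the height condition \eqref{EqCaracModKisin} and the determinant condition defining $Y^\tau(R)$ are exactly encoded by the $(v+p)$-factors appearing in the tabulated matrices and, in the $\mathfrak w$ case, by the relation $c_{1,1}c_{2,2}=-p\,c^*_{1,2}c^*_{2,1}$; this is precisely the content of the classification of gauge bases (Remark~\ref{Rem:gauge_basis}), so it may be cited rather than reproved. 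Finally one cites \cite[Section 5.3.1]{KM} for representability of the functor, so that the explicit presentation above literally \emph{is} the representing ring.

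The main obstacle I anticipate is not any single hard computation but rather the verification that the deformation functor $D^{\tau,\overline\beta}_{\overline\frakM,\mathrm{pol}}$ really is \emph{unobstructed} in the $\mathfrak t,\mathfrak t'$ cases and is cut out by exactly one equation in the $\mathfrak w$ case — i.e. that no hidden compatibility among descent data, the Frobenius $\phi_\MM$, and the polarisation sneaks in an extra relation. Handling this cleanly amounts to showing that the assignment $(\MM_R,\iota_R,\beta_R)\mapsto A^{(1)}_{\beta_R}$ is a \emph{bijection} onto the tabulated set of matrices over $R$ (not merely an injection), which is where the $2$-genericity of $\tau$ is essential: it forces the orientation of $\tau$ to be unique (Remark~\ref{RemOrientationUnique}), so the eigenbasis normalisation is rigid and the reconstruction of $(\MM_R,\phi_{\MM_R},(\widecheck g),\iota_R)$ from $A^{(1)}_{\beta_R}$ is canonical. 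Once that bijection is in hand, reading off the representing ring from each column of the table is immediate.
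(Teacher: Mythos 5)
Your proposal follows exactly the route the paper takes: the paper's justification (given in the sentence preceding the theorem) is precisely that the choice of a triple $(\mathfrak{M}_R,\iota_R,\beta_R)$ is equivalent to the choice of a matrix $A^{(1)}_{\beta_R}$ of the form prescribed by the shape in the table of Definition~5.29, with representability cited from [KM, Section 5.3.1] and the result itself deduced from [KM, Theorem 5.19]. Your case-by-case reading of the representing ring from that table, including the rescaling in the $\mathfrak{w}$ case, is a correct fleshing-out of the same argument.
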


\subsubsection{Some consequences on deformations of $C$-parameters}
\label{ssec:expl_Galois_def}
Let $\rhobar : \Gamma_{\QQ_{p}} \to {}^CG$ be a tamely ramified $p$-modular $C$-parameter and let $\tau : \Inertia_{\QQ_{p^{2}}} \to \GL_{2}(\OO)$ be a principal series inertial type that is Frobenius-twist self-dual. Further assume that $\rhobar$ is $1$-generic, that $\tau$ is $2$-generic, and that there exists a Kisin module in $\overline{\MM} \in Y^{\tau}(k)$ such that $\Functor\left(\overline{\frakM} \right) \cong \mathrm{BC}(\rhobar)_{|\Gamma_{\Q_{p^2, \infty}}}$. In this case, we know from Lemma \ref{Lem:varK_trivial} that such a Kisin module is unique and that it comes with a natural polarisation $\overline{\iota}$. This ensures that it makes sense to define the {\it shape of $\rhobar$ with respect to $\tau$} as the shape of the corresponding Frobenius-twist self-dual Kisin module $(\overline{\MM}, \overline{\iota})$. Remark \ref{FormShapeModp} shows that this shape is of the form $(\widetilde{w},\widetilde{w})$, and our assumption allow us to use the previous results, and in particular the explicit formulae given by Theorem \ref{Thm:expl_def_Kismod}. This leads to the following result, which is a reformulation in this context of \cite[(5.3.2)]{KM}.
\begin{Thm}
\label{Thm:expl_def_Galois}
We have an isomorphism of formal series rings of the following form:
$$
R_{\bar{\rho}}^{\tau}[[S_1,S_2]] \cong R_{\widetilde{w}}^{\mathrm{expl}}[[T_1,T_2,T_3,T_4]] \ .
$$
\end{Thm}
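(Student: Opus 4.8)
The plan is to transport the deformation functor represented by $R_{\rhobar}^{\tau}$ along the chain \emph{$C$-parameter $\leftrightarrow$ polarised Galois representation $\leftrightarrow$ polarised Kisin module} to the one represented by $R^{\tau,\overline{\beta}}_{\overline{\frakM},\mathrm{pol}}$, keeping careful track of the rigidifications (framings versus gauge bases) along the way, and then to invoke Theorem~\ref{Thm:expl_def_Kismod}. First I would fix $A$ in $\mathrm{Art}(\OO)$ and a lift $\rho$ of $\rhobar$ to $A$ that factors through the quotient $R_{\rhobar}^{\square}\twoheadrightarrow R_{\rhobar}^{\tau}$, i.e.\ a potentially crystalline $C$-parameter of $p$-adic Hodge type $(\underline{1},\underline{0},\underline{1})$, inertial type $\tau$ and cyclotomic multiplier. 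By Theorem~\ref{EquivCparamPolarisedTriples} it corresponds functorially to a polarised pair $(\BC(\rho),\alpha)$ lifting $(\BC(\rhobar),\overline{\alpha})$, and the conditions on $\rho$ translate (using Proposition~\ref{PropIntertialTypeChar0} for the inertial type) into the statement that $\BC(\rho)|_{\Gamma_{\QQ_{p^{2}}}}$ is potentially crystalline, crystalline over $L=\QQ_{p^{2}}(\pi)$, with Hodge--Tate weights $\{-1,0\}$ and inertial type $\tau$. Kisin's theory of Breuil--Kisin modules with tame descent data, combined with the $2$-genericity of $\tau$, then produces a \emph{unique} Kisin module $\frakM_{A}\in Y^{\tau}(A)$ lifting $\overline{\frakM}$ with $\Functor(\frakM_{A})\cong\BC(\rho)|_{\Gamma_{\QQ_{p^{2},\infty}}}$ --- the mod-$p$ uniqueness is exactly Lemma~\ref{Lem:varK_trivial}, whose deformation-theoretic refinement amounts to saying that the relevant Kisin variety is a single reduced point --- and $\alpha$ transports to a polarisation $\iota_{A}$ on $\frakM_{A}$ lifting $\overline{\iota}$, so that $(\frakM_{A},\iota_{A})\in Y^{\tau}_{\mathrm{pol}}(A)$.

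The second step is to compare the two \emph{rigidified} deformation problems. The assignment $\rho\mapsto(\frakM_{A},\iota_{A})$ must be rigidified on each side before it becomes an isomorphism of representable functors: on the Kisin side one adds a gauge basis $\beta_{A}$ compatible with $\iota_{A}$ and lifting $\overline{\beta}$, the possible choices forming a torsor under a smooth group, so that $D^{\tau,\overline{\beta}}_{\overline{\frakM},\mathrm{pol}}$ is formally smooth over the un-rigidified deformation functor of $(\overline{\frakM},\overline{\iota})$; on the Galois side one keeps the framing already built into $D_{\rhobar}^{\square}$, together with the data needed to reassemble a genuine $\Gamma_{\QQ_{p}}$-valued $C$-parameter from $\Functor(\frakM_{A})$ via the polarisation, again a smooth torsor. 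Introducing the auxiliary deformation functor $\widetilde{D}$ that records all of this data simultaneously, I would show that both forgetful maps $\widetilde{D}\to D_{\rhobar}^{\tau}$ and $\widetilde{D}\to D^{\tau,\overline{\beta}}_{\overline{\frakM},\mathrm{pol}}$ are formally smooth, and that comparing their relative dimensions (a bookkeeping in terms of $\dim\GL_{2}$, the diagonal torus, and the tame-descent group) yields a net discrepancy of $4-2=2$. Writing $\widetilde{R}$ for the representing object of $\widetilde{D}$, this gives non-canonical isomorphisms
\[
\widetilde{R}\;\cong\;R_{\rhobar}^{\tau}[[S_{1},S_{2}]]
\qquad\text{and}\qquad
\widetilde{R}\;\cong\;R^{\tau,\overline{\beta}}_{\overline{\frakM},\mathrm{pol}}[[T_{1},T_{2},T_{3},T_{4}]].
\]

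Combining these two isomorphisms with Theorem~\ref{Thm:expl_def_Kismod}, which identifies $R^{\tau,\overline{\beta}}_{\overline{\frakM},\mathrm{pol}}$ with $R^{\mathrm{expl}}_{\widetilde{w}}$, yields the asserted isomorphism $R_{\rhobar}^{\tau}[[S_{1},S_{2}]]\cong R^{\mathrm{expl}}_{\widetilde{w}}[[T_{1},T_{2},T_{3},T_{4}]]$. The hard part is the second step: one must pin down the correct auxiliary functor $\widetilde{D}$ and verify that \emph{both} forgetful maps are formally smooth of the claimed relative dimensions, i.e.\ that the framing and gauge-basis torsors are unobstructed and of the expected size. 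This is precisely where the genericity hypotheses ($\rhobar$ being $1$-generic and $\tau$ being $2$-generic) are indispensable: they force the Kisin module attached to $\BC(\rhobar)$ to be unique (Lemma~\ref{Lem:varK_trivial}), make $\Functor$ fully faithful on the subcategory in play, and guarantee that the associated Kisin variety is a reduced point, so that the two rigidified problems differ only by smooth formal coordinates. Everything else is the formal dictionary of the first step together with the explicit computation already carried out in Theorem~\ref{Thm:expl_def_Kismod}; since the statement is a reformulation of \cite[(5.3.2)]{KM}, the genuine content of the proof is this translation into the present normalisation.
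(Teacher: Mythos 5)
Your proposal is correct and follows essentially the same route as the paper, which presents this theorem as a direct reformulation of \cite[(5.3.2)]{KM} and explains afterwards that the variables $S_1,S_2$ account for the gauge basis on the polarised Kisin module while $T_1,\dots,T_4$ account for the framing on the Galois representation --- exactly the bookkeeping in your second step, with the first step (unique polarised Kisin module via $2$-genericity and Lemma~\ref{Lem:varK_trivial}, then Theorem~\ref{Thm:expl_def_Kismod}) also as in \cite{KM}. The one point to handle with care is that $R_{\bar{\rho}}^{\tau}$ is characterised by its points in finite local $E$-algebras (Theorem~\ref{Def:R_rob_tau}), so the comparison of functors on $\mathrm{Art}(\OO)$ is not a literal pointwise dictionary of Artinian lifts but goes through the flatness/reducedness argument of \cite{KM}.
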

Note that Theorem \ref{Thm:expl_def_Galois} does not completely describe $R_{\bar{\rho}}^\tau$, as it does not describe the image of this subring under the given isomorphism.
To fully understand the deformation problem $D^{\tau}_{\rhobar}$, we now plan to solve the following problem.
\begin{What}
Can we explicitly determine the deformation ring $R_{\bar{\rho}}^{\tau}$, without the variables $S_1, S_2$?
\end{What}

The isomorphism of \cite[(5.3.2)]{KM} is valid for deformations of the absolute Galois group of $\bQ_{p^f}$ (not only $\bQ_p$).
In this broader context, it identifies $R_{\bar{\rho}}^{\tau}[[S_1,\ldots, S_{2f}]] $ and $R_{\widetilde{w}}^{\mathrm{expl}}[[T_1,T_2,T_3,T_4]]$,
 $R_{\widetilde{w}}^{\mathrm{expl}}$ being the completed tensor product of $f$ explicit deformation rings (one for each factor (see (\ref{Eqn:factorKM})) of the Kisin module).
The additional formal smooth variables $S_1,\ldots, S_{2f}$ and $T_1,T_2,T_3,T_4$ correspond respectively to the gauge basis on the polarised Kisin module and the framing on the Galois representation.
Thus, we expect the $f$ pairs of variables $S_i$ to correspond to the $f$ factors of the ring~$R_{\widetilde{w}}^{\mathrm{expl}}$.

{\bf Acknowledgements} We are grateful for the opportunity to work together on this project and we wish to thank the organisers of the Women In Numbers Europe 3 conference. We also thank the referees for their careful reading and their valuable comments. The first author was partially supported by the ANR projects PerCoLaTor (ANR-14-CE25-0002-01) and GeRepMod (ANR-16-CE40-0010-01). The second author was partially supported by the ANR projects CLap-CLap (ANR-18-CE40-0026) and FLAIR (ANR-17-CE40-0012).
The third author has received funding from the European Research Council (ERC) under the European Union's Horizon 2020 research and innovation programme (grant agreement No. 714405) while working on this project. The fourth named author was supported by the Engineering and Physical Sciences Research Council~[EP/L015234/1], through the EPSRC Centre for Doctoral Training in Geometry and Number Theory (the London School of Geometry and Number Theory) at University College London.

\bibliographystyle{plain}
\bibliography{winebib}
\end{document}